\newtheorem{theorem}{Theorem}[section]
\newtheorem{lemma}{Lemma}[section]
\theoremstyle{remark}
\theoremstyle{definition}
\newtheorem*{conjecture}{Conjecture}
\theoremstyle{remark}
\newtheoremstyle{myremark}{}{}{\color{blue}\small}{}{\color{blue}\bfseries}{}{ }{}
\theoremstyle{myremark}
\newcommand{\E}{\mathbb{E}} 
\newcommand{\Var}{\operatorname{Var}} 
\renewcommand{\Re}{\operatorname{Re}} 
\newcommand{\R}{{\mathbb R}}
\newcommand{\N}{{\mathbb N}}
\DeclareMathOperator{\sign}{sign}
\newcommand{\mff}{\mathfrak{f}}
\newcommand\1{\mathbf{1}}
\newcommand\X{\mathcal{X}}
\renewcommand*{\@fnsymbol}[1]{\ensuremath{\ifcase#1\or *\or \mathparagraph\or \ddagger\or
        \mathsection\or \mathparagraph\or \|\or **\or \dagger\dagger
        \or \ddagger\ddagger \else\@ctrerr\fi}}
\begin{document}
\setcounter{page}{1000}
\thispagestyle{empty}
\vspace*{10cm}
\noindent This is a preprint of the paper:\\

\noindent Grahovac, D., Leonenko, N.~N., Taqqu, M.~S.~(2019) Limit theorems, scaling of moments and intermittency for integrated finite variance supOU processes, \emph{Stochastic Processes and their Applications}, \textbf{}, accepted for publication.\footnote{\copyright 2019. This manuscript version is made available under the CC-BY-NC-ND 4.0 license \url{http://creativecommons.org/licenses/by-nc-nd/4.0/}}\\

\noindent URL: \url{https://www.sciencedirect.com/science/article/abs/pii/S0304414918303363?via%3Dihub}\\
DOI: \url{https://doi.org/10.1016/j.spa.2019.01.010}
\newpage
\setcounter{page}{1}

\renewcommand*{\thefootnote}{\fnsymbol{footnote}}

\begin{center}
\Large{\textbf{Limit theorems, scaling of moments and intermittency for integrated finite variance supOU processes}}\\
\bigskip
\bigskip
Danijel Grahovac$^1$\footnote{dgrahova@mathos.hr}, Nikolai N.~Leonenko$^2$\footnote{LeonenkoN@cardiff.ac.uk}, Murad S.~Taqqu$^3$\footnote{murad@bu.edu}\\
\end{center}

\bigskip
\begin{flushleft}
\footnotesize{
$^1$ Department of Mathematics, University of Osijek, Trg Ljudevita Gaja 6, 31000 Osijek, Croatia\\
$^2$ School of Mathematics, Cardiff University, Senghennydd Road, Cardiff, Wales, UK, CF24 4AG}\\
$^3$ Department of Mathematics and Statistics, Boston University, Boston, MA 02215, USA
\end{flushleft}

\bigskip

\textbf{Abstract: }
Superpositions of Ornstein-Uhlenbeck type (supOU) processes provide a rich class of stationary stochastic processes for which the marginal distribution and the dependence structure may be modeled independently. We show that they can also display intermittency, a phenomenon affecting the rate of growth of moments. To do so, we investigate the limiting behavior of integrated supOU processes with finite variance. After suitable normalization four different limiting processes may arise depending on the decay of the correlation function and on the characteristic triplet of the marginal distribution. To show that supOU processes may exhibit intermittency, we establish the rate of growth of moments for each of the four limiting scenarios. The rate change indicates that there is intermittency, which is expressed here as a change-point in the asymptotic behavior of the absolute moments.

\bigskip

\section{Introduction}

The Ornstein-Uhlenbeck (OU) processes driven by L\'{e}vy noise and their superpositions (supOU processes) where constructed with the aim of modeling key stylized features of observational series from finance and turbulence. The goal is to find models with analytically and stochastically tractable correlation structure displaying either weak or strong dependence and also having marginal distributions that are infinitely divisible and hence related to both Gaussian and Poisson processes. The supOU processes are particularly relevant in finance and the statistical theory of turbulence. They have applications in environmental studies, ecology, meteorology, geophysics, biology, see e.g.~\cite{barndorff2015recent} and the references therein. A key characteristics are the stochastic representations using L\'{e}vy basis (i.e.~an independently scattered and infinitely divisible random measure). The attractive feature of supOU processes is that they allow the marginal distribution and the dependence structure to be modeled independently from each other. Moreover, they offer a flexible choice of different forms of correlation functions. In particular, the class of finite variance stationary supOU processes contains examples where the correlation function $r( \tau) $ decreases like a power function as the lag increases, more precisely where
\begin{equation}\label{regvarofACR}
r( \tau) \sim \Gamma(1+\alpha) \ell(\tau) \tau^{-\alpha}, \qquad \text{ as } \tau \to \infty,
\end{equation}
for some slowly varying function $\ell$ and $\alpha>0$ (see Section \ref{secPre} for more details). Hence, if $\alpha \in (0,1)$, the correlation function is not integrable, and the supOU process exhibits long-range dependence (long memory or strong dependence). The volume of \cite{doukhan2003} contains surveys of the field.  Note that \cite{metzler1999anomalous} reported a Mittag-Leffler decay in the autocorrelation function of the velocity of a particle in anomalous diffusion. Such a decay can be modeled by the correlation function of supOU processes (see \cite[Example 4]{barndorff2005spectral}).

An exciting area of applications of supOU processes is financial econometrics, in particular the stochastic volatility models, see \cite{barndorff1997processes}, \cite{barndorff2001non} and the references therein. In this setting the \textit{integrated supOU process} \eqref{integratedsupOU} defined below represents the integrated volatility (see e.g.~\cite{barndorff2013multivariate}), hence its limiting behavior is particularly important. The limit theorems developed in the paper can be used for statistical inference based on the generalized method of moments or method of minimum contrast (see e.g.~\cite{stelzer2015moment}). Our results also indicate that to obtain the limiting behavior one has to know or estimate the behavior of the L\'evy measure near the origin which can be challenging (see \cite{Belomestny2015} and the references therein). Just recently in astrophysics the authors of \cite{kelly2013active} (see also the references therein) use the supOU processes to asses the mass of black hole. They used heuristic arguments to estimate parameters of the model under long-range dependence. But to develop mathematical procedures, one needs precise limit theorems as those obtained in this paper.

In this paper we focus on supOU processes $X=\{X(t), \, t \geq 0\}$ having finite variance and investigate the limiting behavior of the  integrated process $X^*=\{X^*(t), \, t \geq 0\}$ where
\begin{equation}\label{integratedsupOU}
X^*(t) = \int_0^t X(s) ds.
\end{equation}
A long quest has preceded the results presented here. The pioneering work of Barndorff-Nielsen \cite{bn2001} already contained a limit theorem corresponding to a specific triangular scheme. Non-central limit theorems with convergence to fractional Brownian motion appeared in \cite{barndorff2005burgers,leonenko2005convergence}. From the results presented here, it is now clear that these do not hold in general and that they depend on the rate of growth of the moments of the integrated process $X^*$, see also \cite{GLST2017Arxiv}, \cite{GLST2016JSP}. 

We focus here on how an unusual rate of growth of the integrated process $X^*(t)$ can affect limit theorems. We refer to this rate as intermittency. There is no unique definition of intermittency. It is a relative concept and its meaning depends on the particular setting under investigation. It refers in general to an unusual moment behavior. It is of major importance in many fields of science, such as the study of rain and cloud studies and other aspects of environmental science, on relation to nanoscale emitters, magnetohydrodynamics, liquid mixtures of chemicals and physics of fusion plasmas, see e.g.~\cite{zel1987intermittency}. Another area of possible application is turbulence. In turbulence, the velocities or velocity derivatives (or differences) under a large Reynolds number could be modeled, as is done here, with infinitely divisible distributions, they allow long range dependence and there seems to exist a kind of switching regime between periods of relatively small random fluctuation and period of ``higher'' activity. This phenomenon is also referred to as intermittency, see e.g.~\cite[Chapter 8]{frisch1995turbulence} or \cite{zel1987intermittency}.

We use the following definition of intermittency. Let $Y=\{Y(t),\, t \geq 0\}$ be a stochastic process. We shall measure the rate of growth of moments by the \textit{scaling function}, defined by
\begin{equation}\label{deftauY}
\tau_Y(q) = \lim_{t\to \infty} \frac{\log \E |Y(t)|^q}{\log t},
\end{equation}
assuming the limit in \eqref{deftauY} exists and is finite. The range of moments $q$ can be infinite or finite, that is $q \in (0,\overline{q}(Y))$, where 
\begin{equation*}
\overline{q}(Y) = \sup \{ q >0 :\E|Y(t)|^q < \infty  \ \forall t\}.
\end{equation*}
It has been shown in \cite{GLST2017Arxiv} that for a non-Gaussian integrated supOU process $X^*$ with marginal distribution having exponentially decaying tails and correlation function satisfying \eqref{regvarofACR}, the scaling function is
\begin{equation}\label{tauX*}
\tau_{X^*}(q)=q-\alpha
\end{equation}
for a certain range of $q$. This implies that the function
\begin{equation*}
q \mapsto \frac{\tau_{X^*}(q)}{q} = 1-\frac{\alpha}{q}
\end{equation*}
is strictly increasing, a property referred to as \textit{intermittency}. Recently, the term \textit{additive intermittency} has also been used (see \cite{chong2018almost}).

To see why this behavior of the scaling function is unexpected and interesting, recall that by Lamperti's theorem (see, for example, \cite[Theorem 2.8.5]{pipiras2017long}), the limits of normalized processes are necessarily self-similar, that is, if
\begin{equation}
\left\{ \frac{X^*(Tt)}{A_T} \right\} \overset{fdd}{\to} \left\{ Z(t) \right\}, \label{limitform}
\end{equation}
holds with convergence in the sense of convergence of all finite dimensional distributions as $T \to \infty$, then $Z$ is $H$-self-similar for some $H>0$, that is, for any constant $c>0$, the finite dimensional distributions of $Z(ct)$ are the same as those of $c^H Z(t)$. Brownian motion for example is self-similar with $H=1/2$. Moreover, the normalizing sequence is of the form $A_T=\ell(T) T^H$ for some $\ell$ slowly varying at infinity. For self-similar process, the moments evolve as a power function of time since $\E|Z(t)|^q=\E|Z(1)|^q t^{Hq}$ and therefore the scaling function of $Z$ is $\tau_Z(q)=Hq$. Hence \eqref{tauX*} does not hold for self-similar processes. But it may not hold either for the process $X^*$ in \eqref{limitform} because one would expect that
\begin{equation}
\frac{\E| X^*(Tt)|^q}{A_T^q} \to \E |Z(t)|^q, \quad \forall t \geq 0, \label{limitformmom}
\end{equation}
and therefore that $\E|X^*(t)|^q$ grows roughly as $t^{Hq}$ when $t\to \infty$. Since \eqref{tauX*} implies that $\E|X^*(t)|^q$ grows roughly as $t^{q-\alpha}$, we conclude that the intermittency of the process $X^*$ contradicts \eqref{limitform} or \eqref{limitformmom} or both. See \cite{GLST2017Arxiv} for the precise statements.

This paper has two main goals:
\begin{enumerate}[(i)]
	\item to establish limit theorems in the form \eqref{limitform} for finite variance integrated supOU processes $X^*$,
	\item to explain how these results relate to intermittency.
\end{enumerate}

We deal with (i) in Section \ref{sec2}. We show that, depending on the conditions on the underlying supOU process, four different limiting processes may be obtained after suitable normalization, namely, fractional Brownian motion, stable L\'evy process, stable process with dependent increments defined below in \eqref{Zalphabeta} and Brownian motion. The nature of the limit will depend on the interplay between several components: whether there is a Gaussian component in the so-called characteristic triplet of the marginal distribution, how strong the dependence is and, somewhat surprisingly, it depends also on the growth of the L\'evy measure of the marginal distribution near the origin. In classical limit theorems, it is typically the tails of the marginal distribution that are important. Here, however, the behavior of the L\'evy measure near the origin may play an important role even though that behavior does not affect the tails of the marginal distribution. Note also that even though the integrated process has finite variance, it may happen that the limiting process has stable non-Gaussian marginal distribution and hence infinite variance. Examples are provided in Section \ref{sec2} illustrating the main results. The proofs of the limit theorems are given in Sections \ref{sec4} and \ref{s:proof-weak} and extend those of \cite{philippe2014contemporaneous} who consider certain discrete type superpositions of AR(1) processes.

In Section \ref{sec3} we investigate how the established limit theorems fit with the intermittency property. For each scenario of Section \ref{sec2}, we establish convergence of moments and derive the expressions for the scaling function for $q>0$. In general, the scaling function $\tau_{X^*}(q)$ will have the shape of a broken line indicating intermittency. The line starts at the origin but then changes slope at some higher value of $q$. This shows that in the intermittent case, the convergence of moments \eqref{limitformmom} does not hold beyond some critical value of $q$. Hence, it is possible to have both intermittency and limit theorems. This phenomenon, moreover, is not only restricted to the long-range dependent case and it, in fact, can happen even when the limit is Brownian motion, a process with independent increments. For further discussion see Section \ref{sec3}. 

In this sense the paper illustrates a new concept which can be named \textit{limit theorems with intermittency effect}. One possible scenario, but not the only one, could be as follows: assume that the aggregated process $X^*=\{X^*(t), \, t \geq 0\}$ can be decomposed as the sum of two independent processes $X_{1}^{*}$ and $X_{2}^{*}$, where for some $A_{T}$ the limit of the first normalized process is self-similar: $\{X_{1}^{*}(Tt)/A_{T}\}\rightarrow^{d} \{Z(t)\}$ as $T\rightarrow \infty$, while the second process does not influence the limit (for example $X_{2}^{*}(Tt)/A_{T}$ converges in probability to $0$ as $T\rightarrow \infty$, but $\E \left\vert X_{2}^{*}(Tt) / A_T\right\vert ^{q}\to \infty$ as $T\to \infty$, for some $q>2$). Then $\{X^{*}(Tt)/A_{T}\}\rightarrow^{d} \{Z(t)\}$ while $\E \left\vert X^{*}(Tt) / A_T\right\vert ^{q}\to \infty$ as $T\to \infty$.

The paper is organized as follows. In Section \ref{secPre} we define the supOU process and the underlying L\'evy basis. Limit theorems are stated in Section \ref{sec2} and moment behavior and intermittency are discussed in Section \ref{sec3}. Proofs are given in Sections \ref{sec4}, \ref{s:proof-weak} and \ref{s:proof-intermittency}.

\section{Preliminaries}\label{secPre}

\subsection{The supOU process}
A \textit{supOU} process is a strictly stationary process $X=\{X(t), \, t\in \R\}$ defined by the stochastic integral
\begin{equation}\label{supOU}
X(t)= \int_{\R_+} \int_{\R} e^{-\xi t + s} \mathbf{1}_{[0,\infty)}(\xi t -s) \Lambda(d\xi,ds).
\end{equation}
Here, $\Lambda$ is a homogeneous infinitely divisible random measure (\textit{L\'evy basis}) on $\R_+ \times \R$,  with cumulant function  
\begin{equation}\label{cumofLam}
C\left\{ \zeta \ddagger \Lambda(A)\right\} := \log \E e^{i \zeta \Lambda(A) } =  m(A) \kappa_{L}(\zeta) = \left( \pi \times Leb \right) (A) \kappa_{L}(\zeta),  
\end{equation}
for $A \in \mathcal{B} \left(\R_+ \times \R\right)$, thus involving the quantities $m$, $\pi$ and $\kappa_L$ which we now define. The \textit{control measure} $m=\pi \times Leb$ is the product of a probability measure $\pi$ on $\R_+$ and the Lebesgue measure on $\R$. The existence of the stochastic integral \eqref{supOU} in the sense of the paper \cite{rajput1989spectral} was proven by \cite{bn2001}. The probability measure $\pi$ ``randomizes'' the rate parameter $\xi$ and the Lebesgue measure is associated with the moving average variable $s$. Finally, $\kappa_{L}$ is the cumulant function $\kappa_{L} (\zeta)= \log \E e^{i \zeta L(1) }$ of some infinitely divisible random variable $L(1)$ with L\'evy-Khintchine triplet 
\begin{equation}\label{e:triplet}
(a,b,\mu_L),
\end{equation}
i.e.
\begin{equation}\label{kappacumfun}
\kappa_{L}(\zeta) = i\zeta a -\frac{\zeta ^{2}}{2} b  +\int_{\R}\left( e^{i\zeta x}-1-i\zeta x \mathbf{1}_{[-1,1]}(x)\right) \mu_L(dx).
\end{equation}
The quadruple 
\begin{equation}\label{quadruple}
(a,b,\mu_L,\pi)
\end{equation}
is referred to as the \textit{characteristic quadruple}.

\subsection{The marginal distribution}
The L\'evy process $L=\{L(t), \, t\geq 0\}$ associated with the triplet $(a,b,\mu_L)$ is termed the \textit{background driving L\'{e}vy process} and its law uniquely determines the one-dimensional marginal distribution of the process $X$ in \eqref{supOU} assuming $\E \log \left(1+ \left| L(1) \right| \right)< \infty$. 

We will consider self-decomposable distributions.\footnote{An infinitely divisible random variable $X(1)$ with characteristic function $\phi$ is self-decomposable if for every constant $c \in (0,1)$ there exists a characteristic function $\phi_c$ such that $\phi(\zeta)=\phi(c \zeta)\phi_c(\zeta)$, $\zeta\in \R$. Examples of self-decomposable distributions include Gamma, variance Gamma, inverse Gaussian, normal inverse Gaussian, Student and positive tempered stable distributions.

The definition of self-decomposability is related to the equation of the $AR(1)$ stationary process
\begin{equation*}
X_n=cX_{n-1}+\varepsilon_n, \quad c\in (0,1), \ n=1,2,\dots
\end{equation*}
Indeed, let $\phi(\zeta)$ denote the characteristic function of $X_n$, which does not depend on $n$ because of stationarity. Then the preceding equation becomes $\phi(\zeta)=\phi(c \zeta)\phi_c(\zeta)$, where $\phi_c$ is the characteristic function of $\varepsilon_n$ which must depend on $c$ to ensure stationarity. See \cite{wolfe1982continuous} for more details.} If $X$ is self-decomposable, then there corresponds a L\'evy process $L$ such that 
\begin{equation}\label{XtoLint}
X\overset{d}{=} \int_0^\infty e^{-s} dL(s).
\end{equation}

Hence, by appropriately choosing the background driving L\'{e}vy process $L$, one can obtain any self-decomposable distribution as a marginal distribution of $X$, and vice-versa. The cumulant functions of the background driving L\'evy process $L$ and the corresponding self-decomposable distribution $X$ are related by
\begin{align}
\kappa_{X}(\zeta )&=\int_{0}^{\infty }\kappa_{L}(e^{-s}\zeta) ds,\label{kappaXtoL} \\
\kappa_{L}(\zeta )&=\zeta \kappa_X'(\zeta ), \label{kappaLtoX}
\end{align}
where $\kappa_X'$ denotes the derivative of $\kappa_X$ (see e.g.~\cite{bn2001} or \cite{jurek2001remarks}).

\subsection{The dependence structure}
While the marginal distribution of supOU process is determined by $L$, the dependence structure is controlled by the probability measure $\pi$. Indeed, if $\E X(t)^2<\infty$, then the correlation function $r(\tau)$ of $X$ is the Laplace transform of $\pi$:
\begin{equation}\label{corrsupOULT}
r(\tau )=\int_{\R_+} e^{-\tau \xi }\pi (d\xi ), \quad \tau \geq 0.
\end{equation}
By a Tauberian argument, one easily obtains (see e.g.~\cite{fasen2007extremes}) that if for some $\alpha>0$ and some slowly varying function $\ell$
\begin{equation}\label{regvarofpi}
\pi \left( (0,x] \right) \sim \ell(x^{-1}) x^{\alpha}, \quad \text{ as } x \to 0,
\end{equation}
then the correlation function satisfies \eqref{regvarofACR} and, in particular, $\alpha \in (0,1)$ yields the long-range dependence. See \cite{bn2001}, \cite{barndorff2018ambit}, \cite{barndorff2005spectral}, \cite{barndorff2013levy}, \cite{barndorff2011multivariate}, \cite{GLST2017Arxiv} for more details about supOU processes.\\

\subsection{Notation}
Through the rest of the paper, $X$ will denote the supOU process defined in \eqref{supOU} with characteristic quadruple $(a,b,\mu_L,\pi)$ and $X^*$ will be the corresponding integrated process \eqref{integratedsupOU}. We assume $X$ has finite variance 
\begin{equation*}
\sigma^2=\Var X(t)<\infty.
\end{equation*}
For simplicity, we assume that the mean $\E X(t)=0$, otherwise one could add centering in the limit theorems.

We use the notation
\begin{equation*}
\kappa_Y(\zeta)=C\left\{ \zeta \ddagger Y\right\} = \log \E e^{i \zeta Y}
\end{equation*}
to denote the cumulant (generating) function of a random variable $Y$. For a stochastic process $Y=\{Y(t)\}$ we write $\kappa_Y(\zeta,t) = \kappa_{Y(t)}(\zeta)$, and by suppressing $t$ we mean $\kappa_Y(\zeta)=\kappa_Y(\zeta,1)$, that is the cumulant function of the random variable $Y(1)$. 

Note that if $X(1)$ has finite moments up to order $p$, then so does $L(1)$ (see \cite[Proposition 3.1]{fasen2007extremes}). Moreover, relation \eqref{kappaXtoL} implies that if $X(1)$ has zero mean, then the same is true for the background driving L\'{e}vy process $L(1)$. In this case, we can write the cumulant function of $L$ in the form (see e.g.~\cite[p.~39]{sato1999levy})
\begin{equation}\label{kappacumfun1}
\kappa_{L}(\zeta) = -\frac{\zeta ^{2}}{2} b + \int_{\R}\left( e^{i\zeta x}-1-i\zeta x\right) \mu_L(dx),
\end{equation}
where $b$ is the variance component. For such a representation we will use the notation $(0,b,\mu_L,\pi)_1$ for the characteristic quadruple. Note the presence of the index $1$ to indicate that the truncation function $\mathbf{1}_{[-1,1]}(x)$ has been replaced by the constant $1$ (see \cite[Section 8]{sato1999levy}). Note also that the variance of the Gaussian component $b$ and the L\'evy measure $\mu_L$ remain unchanged.

\section{Limit theorems}\label{sec2}
We start by assuming that $\alpha \in (0,1)$ in \eqref{regvarofpi}. This can be considered as the long-range dependence scenario. Indeed, $\alpha \in (0,1)$ implies that the correlation function is not integrable since by \eqref{corrsupOULT}
\begin{equation}\label{corrtopi}
\int_0^\infty r(\tau) d\tau = \int_0^\infty \int_0^\infty e^{-\tau \xi} d\tau \pi(d \xi) = \int_0^\infty \xi^{-1} \pi(d \xi) = \infty.
\end{equation}
To simplify the proofs of some of the results below, we will assume that $\pi$ has a density $p$ which is monotone on $(0,x')$ for some $x'>0$ so that \eqref{regvarofpi} implies
\begin{equation}\label{regvarofp}
p (x) \sim \alpha \ell(x^{-1}) x^{\alpha-1}, \quad \text{ as } x \to 0.
\end{equation}
Under long-range dependence different scenarios are possible depending on additional conditions. The following theorem shows that the limit is fractional Brownian motion if a Gaussian component $b\neq 0$ is present in the characteristic triplet of the marginal distribution (or equivalently in the characteristic triplet \eqref{e:triplet} of the background driving L\'{e}vy process). 

Recall that $\{\cdot\} \overset{fdd}{\to} \{\cdot\}$ appearing in particular in Theorems \ref{thm:FBMcase}-\ref{thm:BMcase}, denotes the convergence of finite dimensional distributions.

\begin{theorem}\label{thm:FBMcase}
Suppose that $\pi$ has a density $p$ satisfying \eqref{regvarofp} with $\alpha\in(0,1)$ and some slowly varying function $\ell$. If $b>0$ in \eqref{quadruple}, then as $T\to \infty$
\begin{equation*}
\left\{ \frac{1}{T^{1-\alpha/2 } \ell(T)^{1/2}} X^*(Tt) \right\} \overset{fdd}{\to} \left\{\widetilde{\sigma} B_H(t) \right\},
\end{equation*}
where $\{ B_H(t)\}$ is standard fractional Brownian motion with $H=1-\alpha/2$ and \begin{equation*}
\widetilde{\sigma}^2 = b \frac{\Gamma(1+\alpha)}{(2-\alpha)(1-\alpha)}.
\end{equation*}
\end{theorem}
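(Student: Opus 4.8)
The plan is to compute the joint cumulant function of the finite-dimensional distributions directly. Fix $m\in\N$, times $t_1,\dots,t_m\ge0$ and reals $\theta_1,\dots,\theta_m$. First I would substitute \eqref{supOU} into \eqref{integratedsupOU} and apply a stochastic Fubini theorem to represent $X^*(t)=\int_{\R_+}\int_\R f_t(\xi,u)\,\Lambda(d\xi,du)$ with the deterministic kernel $f_t(\xi,u)=\int_0^t e^{-\xi s+u}\mathbf{1}_{[0,\infty)}(\xi s-u)\,ds$; evaluating the inner integral gives the explicit piecewise form $f_t(\xi,u)=e^u(1-e^{-\xi t})/\xi$ for $u\le0$, $f_t(\xi,u)=(1-e^{u-\xi t})/\xi$ for $0<u\le\xi t$, and $f_t(\xi,u)=0$ for $u>\xi t$. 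By the defining property \eqref{cumofLam} of the L\'evy basis, with $A_T=T^{1-\alpha/2}\ell(T)^{1/2}$ and $g_T(\xi,u)=\sum_{j}\theta_j f_{Tt_j}(\xi,u)$,
\[
\log\E\exp\Big(i\sum_{j}\theta_j\frac{X^*(Tt_j)}{A_T}\Big)=\int_{\R_+}\int_\R \kappa_L\Big(\frac{g_T(\xi,u)}{A_T}\Big)\,du\,\pi(d\xi).
\]
I would then split $\kappa_L$ as in \eqref{kappacumfun1} into its Gaussian part $-\frac b2\zeta^2$ and its jump part $\kappa_L^{J}(\zeta)=\int_\R(e^{i\zeta x}-1-i\zeta x)\mu_L(dx)$, and analyze the two resulting integrals separately.

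The Gaussian part contributes $-\frac{b}{2A_T^2}\int_{\R_+}\int_\R g_T^2\,du\,\pi(d\xi)$, so everything reduces to second-order kernel asymptotics. A direct computation yields the identity $\int_\R f_t(\xi,u)^2\,du=(\xi t+e^{-\xi t}-1)/\xi^2$, and integrating against $\pi$ and using \eqref{corrsupOULT} gives $\int_{\R_+}\int_\R f_t^2\,du\,\pi(d\xi)=\int_0^t(t-\tau)r(\tau)\,d\tau$, while the mixed terms reduce to $\frac12\int_0^{s}\int_0^{s'}r(|u-v|)\,du\,dv$. Since \eqref{regvarofp} implies the regular variation \eqref{regvarofACR} of $r$ with index $-\alpha$, $\alpha\in(0,1)$, Karamata's theorem gives $\int_0^t(t-\tau)r(\tau)\,d\tau\sim\Gamma(1+\alpha)\ell(t)t^{2-\alpha}/((1-\alpha)(2-\alpha))$ and, more generally, that $A_T^{-2}\int\int f_{Tt_j}f_{Tt_k}\,du\,\pi(d\xi)$ converges to a multiple of $t_j^{2H}+t_k^{2H}-|t_j-t_k|^{2H}$ with $H=1-\alpha/2$. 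Collecting constants, the Gaussian part converges to $-\frac12\widetilde\sigma^2\sum_{j,k}\theta_j\theta_k\,\E[B_H(t_j)B_H(t_k)]$, the log-characteristic function of $(\widetilde\sigma B_H(t_j))_j$; this simultaneously fixes $H$ and the value of $\widetilde\sigma^2$.

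The main obstacle is to show that the jump integral $\Psi_T^{J}:=\int_{\R_+}\int_\R\kappa_L^{J}(g_T/A_T)\,du\,\pi(d\xi)$ tends to $0$. This step is delicate precisely because of intermittency: since $X$ has finite variance, $\int_\R x^2\mu_L(dx)<\infty$ and the computation above shows that the \emph{second moment} of $X^*(Tt)/A_T$ does \emph{not} vanish in the limit (its limiting value involves $b+\int x^2\mu_L(dx)$ rather than $b$ alone), so an $L^2$ estimate cannot work — the jump part converges to zero in distribution while retaining a strictly positive limiting variance. I would instead combine two elementary facts about $\kappa_L^{J}$, both consequences of $\int x^2\mu_L(dx)<\infty$: the uniform quadratic bound $|\kappa_L^{J}(\zeta)|\le\frac12\zeta^2\int x^2\mu_L(dx)$ and the sub-quadratic decay $\kappa_L^{J}(\zeta)/\zeta^2\to0$ as $|\zeta|\to\infty$ (by dominated convergence). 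Given $\varepsilon>0$, choose $R$ so that $|\kappa_L^{J}(\zeta)|\le\varepsilon\zeta^2$ for $|\zeta|>R$ and split the $(\xi,u)$-domain according to whether $|g_T/A_T|$ exceeds $R$. On $\{|g_T|>RA_T\}$ the first bound gives at most $\varepsilon A_T^{-2}\int\int g_T^2$, which by the previous paragraph is bounded, up to the factor $\varepsilon$, uniformly in $T$; on $\{|g_T|\le RA_T\}$ the quadratic bound leaves $\frac12\int x^2\mu_L(dx)\,A_T^{-2}\int\int\mathbf{1}\{|g_T|\le RA_T\}\,g_T^2\,du\,\pi(d\xi)$, and the crux is that this truncated second-order mass is $o(A_T^2)$.

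To see the last point, note that the mass of $\int\int f_{Tt}^2$ is carried by small frequencies $\xi\asymp1/T$, where $f_{Tt}\asymp T\gg A_T$, whereas the set $\{|g_T|\le RA_T\}$ lives at $\xi\gtrsim1/A_T$; using the density asymptotics \eqref{regvarofp} one estimates the truncated mass by a constant times $T\,A_T^{1-\alpha}\ell(A_T)$, and after division by $A_T^2$ there remains a factor $T/A_T^{1+\alpha}=T^{-\frac\alpha2(1-\alpha)+o(1)}\to0$ because $\alpha\in(0,1)$. Letting $\varepsilon\downarrow0$ then yields $\Psi_T^{J}\to0$. Adding the two contributions, the joint cumulant converges to that of the Gaussian vector $(\widetilde\sigma B_H(t_j))_j$, which is exactly the asserted convergence of finite-dimensional distributions.
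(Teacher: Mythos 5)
Your proposal is correct, and although its skeleton (reduce to joint cumulants of the L\'evy basis, split $\kappa_L$ into Gaussian and jump parts --- equivalent to the paper's decomposition $\Lambda=\Lambda_1+\Lambda_2$, $X^*=X^*_1+X^*_2$ in \eqref{e:thmFBMdecompostioninproof} --- then kill the jump contribution) matches the paper's, both halves are executed by genuinely different means. For the Gaussian part, the paper proves the slowly varying integral asymptotic \eqref{slowlyvarying under int} directly (Karamata plus Potter's bounds plus the uniform-convergence result \cite[Proposition 4.1.2]{bingham1989regular}), whereas you route everything through the exact identity $\int_{\R_+}\int_\R f_t^2\,du\,\pi(d\xi)=\int_0^t(t-\tau)r(\tau)\,d\tau$ (which checks out: both sides equal $\int_0^\infty(\xi t+e^{-\xi t}-1)\xi^{-2}\pi(d\xi)$) and its mixed-term analogue, then apply Karamata to the regularly varying correlation $r$; this is essentially the alternative the paper itself flags after the proof, via general Gaussian results as in Taqqu (1975). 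For the jump part, the paper substitutes $\xi\to\xi/T$ and applies dominated convergence to $k(\zeta)=\kappa_{L,2}(\zeta)/\zeta^2$ (bounded, vanishing at infinity) with a Potter-bound dominating function; your $\varepsilon$--$R$ truncation of $|g_T/A_T|$ uses exactly the same two facts about $\kappa_L^J$ but replaces dominated convergence by an explicit geometric estimate of the truncated quadratic mass, yielding the quantitative rate $TA_T^{-1-\alpha}\ell(A_T)=T^{-\frac{\alpha}{2}(1-\alpha)+o(1)}$, which the paper's argument does not produce. Your remark that no $L^2$ estimate can work is exactly right and is confirmed by the paper's own computation \eqref{e:momFBM:part2}, where $A_T^{-2}\E X_2^*(Tt)^2$ converges to a strictly positive constant; identifying this as the intermittency mechanism anticipates the discussion following Theorem \ref{thm:momFBMcase}. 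Two details to tidy in a full write-up: (i) the set $\{|g_T|\le RA_T\}$ does not live only at $\xi\gtrsim 1/A_T$ --- for $\xi\lesssim 1/A_T$ there are $u$-edge regions (near $u\approx \xi Tt_j$ and as $u\to-\infty$) where the kernel is small --- but a short computation bounds these contributions by $O\bigl(A_T^{2-\alpha}\ell(A_T)\bigr)$ and $O\bigl(A_T^{2}T^{-\alpha}\ell(T)\bigr)$, both dominated by your stated bound $TA_T^{1-\alpha}\ell(A_T)$, so the conclusion stands; (ii) for $m>1$ the truncated mass of $g_T=\sum_j\theta_j f_{Tt_j}$ should be reduced to the single-time masses, e.g.\ via $\min\bigl(a,\sum_j b_j\bigr)\le\sum_j\min(a,b_j)$ with $a=R^2A_T^2$ and $b_j=m\theta_j^2 f_{Tt_j}^2$.
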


The proof of this result and of the subsequent ones are given in Sections \ref{sec4}, \ref{s:proof-weak} and \ref{s:proof-intermittency}. 

The next scenario assumes that there is no Gaussian component namely $b=0$, so that the background driving process is a pure jump L\'evy process. In addition to the dependence parameter $\alpha$ in \eqref{regvarofp}, the limit in this setting will depend on the behavior of the L\'evy measure near the origin. Two limiting processes may arise in this setting both of which will have infinite variance stable marginals. Recall that the cumulant function of any $\gamma$-stable distributed random variable $Z$ such that $\E Z=0$ if $1<\gamma<2$, and $Z$ is symmetric if $\gamma=1$, can be written in the form (see e.g.~\cite[proof of Theorem 2.2.2]{ibragimov1971independent})
\begin{equation*}
C \left\{ \zeta \ddagger Z \right\} = - |\zeta|^{\gamma} \omega (\zeta; \gamma, c_1, c_2),
\end{equation*}
where
\begin{equation}\label{omega}
\begin{aligned}
&\omega (\zeta; \gamma, c_1, c_2) =\\
&\qquad \qquad \begin{cases}
\frac{\Gamma(2-\gamma)}{1-\gamma} \left( (c_1+c_2) \cos \left(\frac{\pi \gamma}{2}\right) - i (c_1-c_2) \sign (\zeta) \sin \left(\frac{\pi \gamma}{2}\right) \right), & \gamma\neq 1,\\
(c_1+c_2) \frac{\pi}{2}, & \gamma= 1,
\end{cases}
\end{aligned}
\end{equation}
with $c_1, c_2 \geq 0$ and $c_1=c_2$ if $\gamma=1$. By taking
\begin{equation*}
\sigma = \left( \frac{\Gamma(2-\gamma)}{1-\gamma} (c_1+c_2)  \cos \left(\frac{\pi \gamma}{2}\right) \right)^{1/\gamma}, \qquad \beta = \frac{c_1 - c_2}{c_1+c_2},
\end{equation*}
we may rewrite \eqref{omega} for $\gamma \neq 1$ as
\begin{equation*}
\omega (\zeta; \gamma, c_1, c_2) = \sigma^\gamma \left( 1- i \beta \sign (\zeta) \tan \left(\frac{\pi \gamma}{2}\right) \right), 
\end{equation*}
which is a more common parametrization (see e.g.~\cite[Definition 1.1.6]{samorodnitsky1994stable}).

For the first type of the limiting process we will assume that
\begin{equation}\label{muLint1+alpha}
\int_{|x|\leq 1} |x|^{1+\alpha} \mu_L(dx) < \infty.
\end{equation}
This is equivalent to $\beta_{BG}<1+\alpha$ where $\beta_{BG}$ is the \textit{Blumenthal-Getoor index} of the L\'evy measure $\mu_L$ which is defined as (\cite{blumenthal1961sample})\footnote{Clearly \eqref{muLint1+alpha} implies that $\beta_{BG}\leq 1+\alpha$, but it is possible to have $\beta_{BG} = 1+\alpha$ and $\int_{|x|\leq 1} |x|^{1+\alpha} \mu_L(dx) = \infty$ (for example if $\mu_L(dx)=|x|^{-1-\alpha} \log(|x|) dx$). However,  $\beta_{BG}< 1+\alpha$ does imply $\int_{|x|\leq 1} |x|^{1+\alpha} \mu_L(dx) < \infty$, hence the two are equivalent.}
\begin{equation}\label{BGindex}
\beta_{BG} = \inf \left\{\gamma \geq 0 : \int_{|x|\leq 1} |x|^\gamma \mu_L(dx) < \infty \right\}.
\end{equation}
Since $\mu_L$ is the L\'evy measure, we always have $\beta_{BG} \in [0,2]$. 

The normalization sequence in the following theorem involves de Bruijn conjugate of a slowly varying function. The de Bruijn conjugate of some slowly varying function $\ell$ is a unique slowly varying function $\ell^{\#}$ such that 
\begin{equation*}
\ell (x) \ell^{\#} (x \ell(x)) \to 1, \qquad \ell^{\#}(x) \ell( x \ell^{\#}(x) ) \to 1,
\end{equation*}
as $x\to \infty$ (see \cite[Theorem 1.5.13]{bingham1989regular}). In the setup of the following theorem, $\ell^{\#}$ is de Bruijn conjugate of $1/\ell\left(x^{1/(1+\alpha)}\right)$ with $\ell$ coming from \eqref{regvarofp}.

\begin{theorem}\label{thm:SLPcase}
Suppose that $\pi$ has a density $p$ satisfying \eqref{regvarofp} with $\alpha\in(0,1)$ and some slowly varying function $\ell$ and let $\beta_{BG}$ be defined by \eqref{BGindex}. If 
\begin{equation*}
b=0 \ \text{ and } \ \beta_{BG}<1+\alpha,
\end{equation*}
then as $T\to \infty$
\begin{equation*}
\left\{ \frac{1}{T^{1/(1+\alpha)} \ell^{\#}\left(T \right)^{1/(1+\alpha)}} X^*(Tt) \right\} \overset{fdd}{\to} \left\{L_{1 + \alpha} (t) \right\},
\end{equation*}
where $\ell^{\#}$ is de Bruijn conjugate of $1/\ell\left(x^{1/(1+\alpha)}\right)$ and  $\{L_{1+\alpha}\}$ is $(1+\alpha)$-stable L\'evy process such that 
\begin{equation*}
C \left\{ \zeta \ddagger L_{1 + \alpha} (1)  \right\} = - |\zeta|^{1+\alpha} \omega(\zeta; 1+ \alpha, c^-_\alpha, c^+_\alpha)
\end{equation*}
with $c^-_\alpha, c^+_\alpha$ given by
\begin{equation}\label{c+-alpha}
c^-_\alpha = \frac{\alpha}{1+\alpha} \int_{-\infty}^0 |y|^{1+\alpha} \mu_L(dy), \qquad c^+_\alpha = \frac{\alpha}{1+\alpha} \int_0^\infty y^{1+\alpha} \mu_L(dy).
\end{equation}
\end{theorem}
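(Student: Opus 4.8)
The plan is to establish convergence of all finite-dimensional distributions via the Cram\'er--Wold device, i.e.\ by proving convergence of the cumulant function of an arbitrary linear combination. Fix $m$, times $0=t_0<t_1<\cdots<t_m$ and reals $\zeta_1,\dots,\zeta_m$, and put the partial sums $\theta_k=\zeta_k+\cdots+\zeta_m$. Using the L\'evy basis representation \eqref{supOU} and stochastic Fubini, $\sum_j\zeta_j X^*(Tt_j)=\int_{\R_+}\int_\R\big(\sum_j\zeta_j f_{Tt_j}(\xi,s)\big)\Lambda(d\xi,ds)$, where
\[
f_t(\xi,s)=\int_0^t e^{-\xi u+s}\mathbf 1_{[0,\infty)}(\xi u-s)\,du
\]
is computed explicitly: it vanishes for $s>\xi t$, equals $\xi^{-1}(1-e^{s-\xi t})$ for $0<s\le\xi t$, and equals $e^{s}\xi^{-1}(1-e^{-\xi t})$ for $s\le 0$. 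By the integration theory for L\'evy bases (\cite{rajput1989spectral}) and \eqref{cumofLam}, the object to analyze is
\[
\Psi_T=\int_{\R_+}\int_\R\kappa_L\!\Big(\tfrac{1}{A_T}\textstyle\sum_j\zeta_j f_{Tt_j}(\xi,s)\Big)\,ds\,p(\xi)\,d\xi,\qquad A_T=T^{1/(1+\alpha)}\ell^{\#}(T)^{1/(1+\alpha)},
\]
and it suffices to show $\Psi_T\to\sum_{k=1}^m(t_k-t_{k-1})\,C\{\theta_k\ddagger L_{1+\alpha}(1)\}$, which is exactly the joint cumulant of $(L_{1+\alpha}(t_1),\dots,L_{1+\alpha}(t_m))$ at $(\zeta_1,\dots,\zeta_m)$ because the limit has independent, stationary increments.

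The next step is to locate the correct scale. Writing $\lambda_T=T/A_T\to\infty$ and substituting $\xi=v/A_T$, the kernel on its plateau takes values $\approx A_T/v$, so the argument $\tfrac{1}{A_T}f_{Tt_j}(v/A_T,s)$ becomes of order unity; this is why the critical range is $\xi\sim A_T^{-1}=T^{-1/(1+\alpha)}$ rather than $\xi\sim T^{-1}$. The support in $s$ of $f_{Tt_j}(v/A_T,\cdot)$ is $(-\infty,v\lambda_T t_j)$, so the plateau width is of order $\lambda_T$. I would then show the inner $s$-integral is asymptotically carried by the plateau: on $s\in(v\lambda_T t_{k-1},v\lambda_T t_k)$ precisely the kernels with index $j\ge k$ are ``on'' and nearly constant equal to $A_T/v$, so $\tfrac{1}{A_T}\sum_j\zeta_j f_{Tt_j}\approx\theta_k/v$ there, while the region $s\le0$ and the $O(1)$-wide transition zones near each endpoint contribute only $O(1)$ and are negligible against $\lambda_T$. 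This gives
\[
\int_\R\kappa_L\!\Big(\tfrac{1}{A_T}\textstyle\sum_j\zeta_j f_{Tt_j}(v/A_T,s)\Big)ds\;\sim\;\lambda_T\,v\sum_{k=1}^m(t_k-t_{k-1})\,\kappa_L(\theta_k/v).
\]

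Inserting this together with \eqref{regvarofp}, namely $p(v/A_T)\sim\alpha\,\ell(A_T)\,v^{\alpha-1}A_T^{1-\alpha}$ (using $\ell(A_T/v)\sim\ell(A_T)$ on compact $v$-sets), collects the prefactor $T\alpha\ell(A_T)A_T^{-(1+\alpha)}$; here the de Bruijn conjugate enters, $\ell^{\#}$ being defined precisely so this slowly varying prefactor tends to a finite nonzero constant, which produces the stated normalization. The remaining integral $\int_0^\infty\kappa_L(\theta_k/v)v^\alpha dv$ becomes, after $w=|\theta_k|/v$, equal to $|\theta_k|^{1+\alpha}\int_0^\infty\kappa_L(\sign(\theta_k)w)\,w^{-2-\alpha}dw$. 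Expanding $\kappa_L$ via \eqref{kappacumfun1} and applying Fubini reduces the inner $w$-integral to the classical formula $\int_0^\infty(e^{iwx}-1-iwx)w^{-2-\alpha}dw=\Gamma(-1-\alpha)|x|^{1+\alpha}e^{\mp i\pi(1+\alpha)/2}$, whose real and imaginary parts reconstitute $\omega(\,\cdot\,;1+\alpha,c^-_\alpha,c^+_\alpha)$ from \eqref{omega} with $c^\pm_\alpha$ as in \eqref{c+-alpha}; the factor $\alpha/(1+\alpha)$ there arises by combining the $\alpha$ from regular variation with the Gamma-function constants. This identifies the limit of $\Psi_T$ as the stable L\'evy cumulant and yields the finite-dimensional convergence.

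The main obstacle will be the rigorous justification of the two asymptotic replacements by dominated convergence, uniformly over all $(v,s)$ and all large $T$. Two points require care. First, finiteness of the limiting integral $\int_0^\infty\kappa_L(\theta/v)v^\alpha dv$: near $v=\infty$ the argument is small and $\kappa_L(\theta/v)\sim-\tfrac12\theta^2v^{-2}\int_\R x^2\mu_L(dx)$ is integrable against $v^\alpha$ because $\alpha<1$ (finite variance), whereas near $v=0$ the argument blows up and integrability is guaranteed exactly by $\int_\R|x|^{1+\alpha}\mu_L(dx)<\infty$, i.e.\ by the hypothesis $\beta_{BG}<1+\alpha$ --- this is where that assumption is used. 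Second, constructing a $T$-uniform integrable majorant requires controlling the ratio $\ell(A_T/v)/\ell(A_T)$ simultaneously for small and large $v$ through Potter's bounds, and controlling the kernel's edge effects; these are the genuinely technical estimates of the proof.
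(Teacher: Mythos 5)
Your proposal is correct and follows essentially the same route as the paper's proof: convergence of joint cumulant functions at the critical scale $\xi\sim A_T^{-1}$, reduction of the limit to $\alpha\int_0^\infty \kappa_L(x)\,x^{-\alpha-2}\,dx$ and its evaluation via the classical formula $\int_0^\infty(e^{\mp iu}-1\pm iu)u^{-\gamma-1}du$ with $\gamma=1+\alpha$, the de Bruijn conjugate absorbing the slowly varying normalization, and dominated convergence justified through Potter's bounds, with $\beta_{BG}<1+\alpha$ entering exactly where you place it (integrability of $|\kappa_L|$ against $x^{-\alpha-2}$ for large arguments, finite variance handling small ones). The only difference is bookkeeping rather than substance: the paper first splits each increment into an asymptotically negligible ``past'' part $\Delta X^*_{(1)}$ and mutually independent parts $\Delta X^*_{(2)}$ supported on disjoint strips of the L\'evy basis, reducing everything to a single increment by stationarity, whereas you keep the full multivariate kernel and read off the partial sums $\theta_k$ on the plateaus --- your $s\le 0$ region and the $O(1)$-wide transition zones correspond precisely to the paper's $\Delta X^*_{(1)}$ term and its $I_{T,2}$ estimate.
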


When 
\begin{equation*}
\int_{|x|\leq 1} |x|^{1+\alpha} \mu_L(dx) = \infty,
\end{equation*}
another stable process may arise in the limit. This time the limiting process will have dependent increments and it will depend on the rate of growth of the L\'evy measure near the origin. To quantify this rate of growth, we will assume a power law behavior of $\mu_L$ near origin. Let
\begin{align*}
M^+(x) &= \mu_L \left( [x, \infty) \right), \quad x>0\\
M^-(x) &= \mu_L \left( (-\infty, -x] \right), \quad x>0,
\end{align*}
denote the tails of $\mu_L$ and assume there exists $\beta>0$, $c^+, c^- \geq 0$, $c^++c^->0$ such that
\begin{equation}\label{LevyMCond}
M^+(x) \sim c^+ x^{-\beta} \ \text{ and } \ M^-(x) \sim c^- x^{-\beta} \ \text{  as } x \to 0.
\end{equation}
In particular, $\beta$ is the Blumenthal-Getoor index of $\mu_L$, $\beta=\beta_{BG}$. We will assume in the next theorem that $\beta>1+\alpha$. This implies that $\int_{|x|\leq 1} |x|^{1+\alpha} \mu_L(dx) \allowbreak = \infty$, hence this setting complements the one considered in Theorem \ref{thm:SLPcase}.

The property \eqref{LevyMCond} is stated in terms of the L\'evy measure $\mu_L$ of the background driving L\'{e}vy process $L$. We could, however, also state the condition in terms of the L\'evy measure $\mu_X$ of the corresponding self-decomposable distribution of $X$. Indeed, by \cite[Theorem 17.5]{sato1999levy} and Karamata's theorem \cite[Theorem 1.5.11]{bingham1989regular}, we have as $x\to 0$
\begin{equation}\label{muXcond}
\mu_X \left( [x, \infty) \right) = \int_0^\infty M^+ \left( e^s x \right) ds = \int_0^{1/x} M^+(s^{-1}) s^{-1} ds \sim \beta^{-1} M^+(x)
\end{equation}
and similarly $\mu_X \left( (-\infty, -x] \right) \sim \beta^{-1} M^-(x)$. Note that for \eqref{LevyMCond} the behavior of $\mu_L$ away from the origin is irrelevant.

\begin{theorem}\label{thm:Zcase}
Suppose that $\pi$ has a density $p$ satisfying \eqref{regvarofp} with $\alpha\in(0,1)$ and some slowly varying function $\ell$ and suppose \eqref{LevyMCond} holds with $\beta>0$. 
If 
\begin{equation*}
b=0 \ \text{ and } \ 1+\alpha<\beta<2,
\end{equation*}
then as $T\to \infty$
\begin{equation*}
\left\{ \frac{1}{T^{1-\alpha/\beta} \ell(T)^{1/\beta}} X^*(Tt) \right\} \overset{fdd}{\to} \left\{Z_{\alpha, \beta} (t) \right\},
\end{equation*}
where $\{Z_{\alpha, \beta}\}$ is a process with the stochastic integral representation
\begin{equation}\label{Zalphabeta}
Z_{\alpha, \beta}(t) = \int_{\R_+} \int_{\R} \left( \mathfrak{f}(\xi, t-s) - \mathfrak{f}(\xi, -s) \right) K(d\xi, ds),
\end{equation}
$\mathfrak{f}$ is given by
\begin{equation}\label{mff}
\mff(x,u) = \begin{cases}
1-e^{-xu}, & \text{ if } x>0 \text{ and } u>0,\\
0, & \text{ otherwise},
\end{cases}
\end{equation}
and $K$ is a $\beta$-stable L\'evy basis on $\R_+ \times \R$ with control measure \begin{equation*}
k(d\xi, ds)=\alpha \xi^{\alpha} d\xi ds,
\end{equation*}
such that 
\begin{equation*}
C \left\{ \zeta \ddagger K(A) \right\} = - |\zeta|^{\beta} \omega (\zeta; \beta, c^+, c^-) k(A).
\end{equation*}
\end{theorem}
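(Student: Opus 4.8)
The plan is to prove the convergence of finite-dimensional distributions by showing that the joint cumulant functions converge. Fix times $0\le t_1<\cdots<t_n$ and reals $\zeta_1,\ldots,\zeta_n$. By a stochastic Fubini argument (justified because the kernel below is $\Lambda$-integrable), \eqref{integratedsupOU} and \eqref{supOU} give $X^*(Tt_j)=\int_{\R_+}\int_{\R} g_{Tt_j}(\xi,s)\,\Lambda(d\xi,ds)$ with the deterministic kernel $g_t(\xi,s)=\int_0^t e^{-\xi u+s}\1_{[0,\infty)}(\xi u-s)\,du$, which one evaluates in closed form. Since $\Lambda$ is a L\'evy basis with cumulant \eqref{cumofLam}, linearity of the stochastic integral yields
\begin{equation*}
\log\E\exp\Big(\tfrac{i}{A_T}\textstyle\sum_{j}\zeta_j X^*(Tt_j)\Big)=\int_{\R_+}\!\!\int_{\R}\kappa_L\Big(\tfrac{1}{A_T}\textstyle\sum_{j}\zeta_j\,g_{Tt_j}(\xi,s)\Big)\,p(\xi)\,ds\,d\xi ,
\end{equation*}
with $A_T=T^{1-\alpha/\beta}\ell(T)^{1/\beta}$, and it suffices to show that this converges to the joint cumulant of $\sum_j\zeta_j Z_{\alpha,\beta}(t_j)$.

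The decisive step is a change of variables exposing the self-similar kernel. Setting $\xi=\eta/T$ and $s=\eta\tau$ (Jacobian $\eta/T$), a direct computation gives $g_{Tt}(\eta/T,\eta\tau)=(T/\eta)\big(\mff(\eta,t-\tau)-\mff(\eta,-\tau)\big)=:(T/\eta)\,h_t(\eta,\tau)$, so that the time horizon $T$ disappears from the kernel and the integrand of \eqref{Zalphabeta} emerges. Simultaneously, \eqref{regvarofp} and the uniform convergence theorem for slowly varying functions give $p(\eta/T)(\eta/T)\sim\alpha\,\ell(T)\,T^{-\alpha}\eta^{\alpha}$ for each fixed $\eta$. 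Using $T/(A_T\eta)=T^{\alpha/\beta}/(\eta\,\ell(T)^{1/\beta})$, the cumulant becomes
\begin{equation*}
\int_{\R_+}\!\!\int_{\R}\kappa_L\Big(\tfrac{T^{\alpha/\beta}}{\eta\,\ell(T)^{1/\beta}}\,H(\eta,\tau)\Big)\,p(\eta/T)\,\tfrac{\eta}{T}\,d\tau\,d\eta,\qquad H(\eta,\tau):=\textstyle\sum_{j}\zeta_j\,h_{t_j}(\eta,\tau).
\end{equation*}

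The analytic core is the high-frequency behaviour of $\kappa_L$. Since $b=0$, $\kappa_L$ is the pure-jump cumulant \eqref{kappacumfun1}, and because by \eqref{LevyMCond} the L\'evy measure obeys the power law $M^\pm(x)\sim c^\pm x^{-\beta}$ at the origin with $1+\alpha<\beta<2$, rescaling the jump variable by the frequency and applying Karamata's theorem yields
\begin{equation*}
\kappa_L(\lambda)\sim -|\lambda|^{\beta}\,\omega(\lambda;\beta,c^+,c^-),\qquad |\lambda|\to\infty ,
\end{equation*}
with $\omega$ as in \eqref{omega}; it is the behaviour of $\mu_L$ \emph{at the origin}, not its tails, that governs this regime. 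As the argument above diverges like $T^{\alpha/\beta}$ wherever $H\neq0$, this asymptotic applies pointwise: the factor $|\lambda|^{\beta}$ produces $T^{\alpha}\eta^{-\beta}\ell(T)^{-1}|H|^{\beta}$, whose powers $T^{\alpha}$ and $\ell(T)^{-1}$ cancel exactly against the $\alpha\,\ell(T)\,T^{-\alpha}\eta^{\alpha}$ coming from $p(\eta/T)(\eta/T)$. The integrand therefore converges pointwise to $-\alpha\,\eta^{\alpha-\beta}\,|H(\eta,\tau)|^{\beta}\,\omega(H(\eta,\tau);\beta,c^+,c^-)$, which is exactly the cumulant density of $\sum_j\zeta_j\int h_{t_j}\,dK$ for the $\beta$-stable basis $K$ of the statement, the scale-variable density $\propto\xi^{\alpha-\beta}$ being the one for which $Z_{\alpha,\beta}$ is $(1-\alpha/\beta)$-self-similar as Lamperti's theorem requires.

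The main obstacle is to upgrade this pointwise convergence of integrands to convergence of the integrals, i.e.\ a uniform-integrability argument. I would split the $\eta$-axis at the $T$-dependent threshold $\eta\asymp T^{\alpha/\beta}$ at which the argument of $\kappa_L$ is of unit order. For small arguments (large $\eta$, or strongly negative $\tau$) I would use the finite-variance bound $|\kappa_L(\lambda)|\le C\lambda^{2}$, available since $X$, hence $L$, has finite variance; for large arguments I would use the stable asymptotic above together with Potter's bounds to control $\ell(T/\eta)/\ell(T)$ uniformly. A power count then shows that the contribution of the far region $\eta\gtrsim T^{\alpha/\beta}$ carries the net power $T^{\alpha(\alpha+1-\beta)/\beta}\to0$, and that the limiting $(\eta,\tau)$-integral converges at $\eta=\infty$, where $\int_{\R}|h_t|^{\beta}\,d\tau$ stays of order one so that integrability of $\eta^{\alpha-\beta}$ is required; both facts rest on the hypothesis $\beta>1+\alpha$, which is thus seen to be exactly the boundary of this regime. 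Since the linear combination $H=\sum_j\zeta_j h_{t_j}$ enters the computation precisely as a single kernel does, the one-time estimate immediately yields the full finite-dimensional statement.
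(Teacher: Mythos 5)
Your proposal is correct and follows essentially the same route as the paper's proof: both compute the joint cumulant of the normalized finite-dimensional distributions from the L\'evy-basis representation, rescale variables so that $T$ disappears from the kernel $\mathfrak{f}(\xi,t_j-s)-\mathfrak{f}(\xi,-s)$, convert the small-scale power law \eqref{LevyMCond} into the stable asymptotics of $\kappa_L$ (the paper's facts \eqref{PPSprop1}--\eqref{PPSprop2}, cited from \cite{philippe2014contemporaneous}), and pass to the limit under the integral via Potter's bounds for the slowly varying factor. Your only deviations are cosmetic: a single combined change of variables $(\xi,s)=(\eta/T,\eta\tau)$ in place of the paper's two-step substitution, and a two-regime domination (quadratic bound for $\eta\gtrsim T^{\alpha/\beta}$ with vanishing remainder $T^{\alpha(\alpha+1-\beta)/\beta}$, stable bound below) where the paper uses the single global bound $|\kappa_L(\zeta)|\le C|\zeta|^{\beta}$, with the identical use of $\beta>1+\alpha$ for integrability at $\eta=\infty$.
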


The process $\{Z_{\alpha, \beta}\}$ defined in \eqref{Zalphabeta} was obtained by \cite{puplinskaite2010aggregation} in a similar limiting scheme. It is 
\begin{itemize}
\item $\beta$-stable,
\item $H=1-\alpha/\beta$ self-similar,
\item has stationary increments
\item and has continuous paths a.s.
\end{itemize}
This can be checked from the cumulant function of the finite dimensional distributions which is given by
\begin{equation}\label{e:Zalphabetafidis}
\begin{aligned}
C & \left\{ \zeta_1, \dots, \zeta_m \ddagger \left(Z_{\alpha, \beta}(t_1),\dots , Z_{\alpha, \beta}(t _m) \right) \right\}\\
&= - \int_0^\infty \int_{-\infty}^\infty \left| \sum_{j=1}^m \zeta_j  \left( \mathfrak{f}(\xi, t_j-s) - \mathfrak{f}( \xi, -s)  \right) \right|^\beta\\
&\qquad \times \omega \left( \sum_{j=1}^m \zeta_j  \left( \mathfrak{f}(\xi, t_j-s) - \mathfrak{f}( \xi, -s)  \right); \beta, c^+, c^- \right) \alpha \xi^{\alpha-\beta} ds d\xi.
\end{aligned}
\end{equation}
Indeed, consider $\{Z_{\alpha, \beta}(at)\}$ with $a>0$. To show self-similarity, namely that $\{Z_{\alpha, \beta}(at)\}\overset{d}{=}\{a^H Z_{\alpha, \beta}(t)\}$ with $H=1-\alpha/\beta$, use \eqref{e:Zalphabetafidis} and make the change of variables $\xi \to \xi/a$ and $s \to as$. This implies $\xi^{\alpha-\beta} ds d\xi \to a^{-(\alpha - \beta)} \xi^{\alpha-\beta} ds d\xi$, and hence $H=-(\alpha - \beta)/\beta=1-\alpha/\beta$. The continuity of the sample paths follows from the  Kolmogorov-Chentsov theorem (see e.g.~\cite[Theorem 2.8]{karatzas1991brownian}) since by self-similarity and stationarity of increments we have
\begin{equation*}
E \left|Z_{\alpha,\beta}(t)-Z_{\alpha,\beta}(s)\right|^{1+\alpha} \leq C \left|t-s\right|^{(1-\alpha/\beta)(1+\alpha)},
\end{equation*}
and $(1-\alpha/\beta)(1+\alpha)>1$.

It remains to consider the case when the correlation function is integrable which by \eqref{corrtopi} is equivalent to $\int_0^\infty \xi^{-1} \pi(d \xi)< \infty$. We can therefore think of this case as short-range dependence.

\begin{theorem}\label{thm:BMcase}
If $\int_0^\infty \xi^{-1} \pi(d \xi)< \infty$, then as $T\to \infty$
\begin{equation*}
\left\{ \frac{1}{T^{1/2}} X^*(Tt) \right\} \overset{fdd}{\to} \left\{\widetilde{\sigma} B(t) \right\},
\end{equation*}
where $\{ B(t)\}$ is standard Brownian motion and 
\begin{equation}\label{sigmatilBM}
\widetilde{\sigma}^2= 2 \sigma^2 \int_0^\infty \xi^{-1} \pi(d \xi), \quad \sigma^2=\Var X(1).
\end{equation}
\end{theorem}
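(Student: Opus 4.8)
The plan is to prove convergence of the finite-dimensional distributions through their joint cumulant functions, exploiting the L\'evy basis representation. First I would apply the stochastic Fubini theorem to \eqref{integratedsupOU} and \eqref{supOU}, justified by the finite variance assumption, to write $X^*(t)=\int_{\R_+}\int_\R g_t(\xi,s)\,\Lambda(d\xi,ds)$ with kernel $g_t(\xi,s)=\int_0^t e^{-\xi u+s}\1_{[0,\infty)}(\xi u-s)\,du$. A direct computation gives the piecewise form
\[
g_t(\xi,s)=e^s\frac{1-e^{-\xi t}}{\xi}\1_{\{s\le 0\}}+\frac{1-e^{s-\xi t}}{\xi}\1_{\{0<s\le\xi t\}},
\]
from which I would record the two estimates that drive the whole argument: $0\le g_t(\xi,s)\le 1/\xi$, and $\int_\R g_t(\xi,s)^2\,ds=t/\xi-(1-e^{-\xi t})/\xi^2\le t/\xi$. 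Using \eqref{cumofLam}, the joint cumulant function of $(X^*(Tt_1),\dots,X^*(Tt_m))$ normalized by $T^{-1/2}$ becomes $\int_{\R_+}\int_\R \kappa_L\!\big(T^{-1/2}\Phi_T(\xi,s)\big)\,ds\,\pi(d\xi)$, where $\Phi_T(\xi,s)=\sum_{j=1}^m\zeta_j g_{Tt_j}(\xi,s)$, and the target is the cumulant $-\tfrac12\widetilde\sigma^2\sum_{j,k}\zeta_j\zeta_k\min(t_j,t_k)$ of $(\widetilde\sigma B(t_1),\dots,\widetilde\sigma B(t_m))$.

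Next I would split $\kappa_L$ into a quadratic part and a remainder. Since $X(1)$, and hence $L(1)$, has finite variance and zero mean, $\kappa_L(\zeta)=-\tfrac12\sigma_L^2\zeta^2+R(\zeta)$, with $\sigma_L^2=b+\int_\R x^2\mu_L(dx)=\Var L(1)$ and $R(\zeta)=o(\zeta^2)$ as $\zeta\to 0$. The quadratic part contributes $-\tfrac{\sigma_L^2}{2}T^{-1}\sum_{j,k}\zeta_j\zeta_k\int_{\R_+}\int_\R g_{Tt_j}g_{Tt_k}\,ds\,\pi(d\xi)$, and since $\sigma_L^2\int_{\R_+}\int_\R g_t g_{t'}\,ds\,\pi(d\xi)=\Cov\big(X^*(t),X^*(t')\big)=\sigma^2\int_0^t\int_0^{t'}r(|u-v|)\,du\,dv$, this reduces to the standard short-range-dependence asymptotics $T^{-1}\sigma^2\int_0^{Tt}\int_0^{Tt'}r(|u-v|)\,du\,dv\to 2\sigma^2\min(t,t')\int_0^\infty r(\tau)\,d\tau$, which equals $\widetilde\sigma^2\min(t,t')$ by \eqref{corrtopi} and \eqref{sigmatilBM}. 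This step is routine: a change of variables to $w=|u-v|$ reduces the double integral to $2\int_0^{Tt}(Tt-w)r(w)\,dw$, and the correction term $T^{-1}\int_0^{Tt}w\,r(w)\,dw$ vanishes by dominated convergence.

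The main obstacle is showing the remainder $\int_{\R_+}\int_\R R\!\big(T^{-1/2}\Phi_T\big)\,ds\,\pi(d\xi)\to 0$. Here I would use two facts: the global bound $|\kappa_L(\zeta)|\le\tfrac12\sigma_L^2\zeta^2$ (from $|e^{iy}-1-iy|\le y^2/2$), giving $|R(\zeta)|\le\sigma_L^2\zeta^2$ for all $\zeta$, and $|R(\zeta)|\le\varepsilon\zeta^2$ for $|\zeta|\le\delta$. I would then split the domain at the set $\{T^{-1/2}|\Phi_T|\le\delta\}$. On this set $|R(T^{-1/2}\Phi_T)|\le\varepsilon T^{-1}\Phi_T^2$, so its integral is at most $\varepsilon\,T^{-1}\int_{\R_+}\int_\R\Phi_T^2\,ds\,\pi(d\xi)$, a quantity that stays bounded by the quadratic computation above, hence contributes at most a constant multiple of $\varepsilon$. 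On the complement, the bound $g_t\le 1/\xi$ forces $\xi<CT^{-1/2}$ for a constant $C$ depending on $\delta$ and the $\zeta_j$; applying $|R|\le\sigma_L^2\zeta^2$ together with $\int_\R g_{Tt_j}^2\,ds\le Tt_j/\xi$ and Cauchy--Schwarz bounds this piece by $\sigma_L^2\,m\sum_j\zeta_j^2 t_j\int_0^{CT^{-1/2}}\xi^{-1}\pi(d\xi)$, which tends to $0$ precisely because the short-range-dependence hypothesis $\int_0^\infty\xi^{-1}\pi(d\xi)<\infty$ forces the tail integral over $(0,CT^{-1/2})$ to vanish. Letting $\varepsilon\to0$ finishes the proof. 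The delicate point throughout is that the kernel is unbounded near $\xi=0$, so the convergence of cumulants rests entirely on the integrability condition at the origin rather than on any tail condition on the marginal distribution.
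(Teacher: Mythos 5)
Your proposal is correct, and although it lives in the same cumulant framework as the paper (your kernel representation $X^*(t)=\int g_t\,d\Lambda$ and the formula $C\{\zeta\ddagger\int f\,d\Lambda\}=\int\kappa_L(\zeta f)\,ds\,\pi(d\xi)$ are exactly the content of the paper's Lemma in Section \ref{sec4} and \eqref{integrationrule}), the limiting argument is genuinely different. The paper reduces to increments and splits each one as $X^*(Tt_i)-X^*(Tt_{i-1})=\Delta X^*_{(1)}+\Delta X^*_{(2)}$ according to the region of the L\'evy basis, as in \eqref{e:thmSLP:decomposition}, so that the innovation terms $\Delta X^*_{(2)}$ are independent across $i$; it then shows $T^{-1/2}\Delta X^*_{(1)}\to 0$ and proves one-dimensional convergence of $T^{-1/2}\Delta X^*_{(2)}$ by dominated convergence, with dominating function a constant times $\xi^{-1}$ — the hypothesis $\int_0^\infty\xi^{-1}\pi(d\xi)<\infty$ enters as the domination. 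You instead keep the full joint cumulant, expand $\kappa_L(\zeta)=-\tfrac12\sigma_L^2\zeta^2+R(\zeta)$ exactly, identify the quadratic part with the covariance and compute it by the classical short-range-dependence asymptotics $T^{-1}\int_0^{Tt}\int_0^{Tt'}r(|u-v|)\,du\,dv\to 2\min(t,t')\int_0^\infty r(\tau)\,d\tau$, and dispose of $R$ by an $\varepsilon$--$\delta$ truncation in which the kernel bound $g_t\le 1/\xi$ confines the bad set to $\xi\le CT^{-1/2}$, where $\int_0^{CT^{-1/2}}\xi^{-1}\pi(d\xi)\to 0$. Your route buys a self-contained proof that needs neither the increment decomposition, nor stationarity of increments, nor independence, and it makes transparent that the hypothesis is used twice: once to identify $\widetilde\sigma^2=2\sigma^2\int_0^\infty r(\tau)\,d\tau$ via \eqref{corrtopi}, and once to kill the remainder near $\xi=0$. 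The paper's route buys reusability and economy: the same decomposition drives Theorem \ref{thm:SLPcase}, and joint convergence is automatic from the independence of the $\Delta X^*_{(2)}$'s, so only one-dimensional limits must be checked. The one spot worth tightening in your write-up is the covariance asymptotics for $t\neq t'$ — your change of variables $w=|u-v|$ is stated for the diagonal case; for $t<t'$ either write the inner integral as $\int_0^u r(w)\,dw+\int_0^{Tt'-u}r(w)\,dw$ and average, or polarize using stationarity of increments — but this is routine and does not affect correctness.
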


Theorem \ref{thm:BMcase} covers, for example, the case of finite superpositions which are obtained by taking $\pi$ to be a probability measure with finite support. This special case was proved in \cite{GLST2016JSP} by using standard arguments. However, the assumption of Theorem \ref{thm:BMcase} also covers the case where $\pi$ satisfies \eqref{regvarofpi} with some $\alpha >1$. In this case the limit theorem coexists with intermittency as will be seen in the next section.\\

Based on the previous results, we can summarize the limiting behavior of the integrated finite variance supOU process. In the short-range dependent case, which is implied by $\alpha>1$ in \eqref{regvarofpi}, the limit is Brownian motion. When $\alpha<1$, the type of the limit depends on the L\'evy triplet of the marginal distribution. If a Gaussian component is present, the limit is fractional Brownian motion. If there is no Gaussian component, the limit may be a stable L\'evy process or the stable process \eqref{Zalphabeta} with dependent increments, depending on the behavior of the L\'evy measure $\mu_L$ in \eqref{kappacumfun} around the origin.

In order to summarize the results in a simplified manner, suppose \eqref{regvarofp} holds with some $\alpha>0$ and if $b=0$, suppose additionally that \eqref{LevyMCond} holds with some $0<\beta<2$. Let $\beta=2$ denote the case when the Gaussian component is present. Then the limits can be classified as shown in Figure \ref{fig:limits1} and Figure \ref{fig:limits2}.

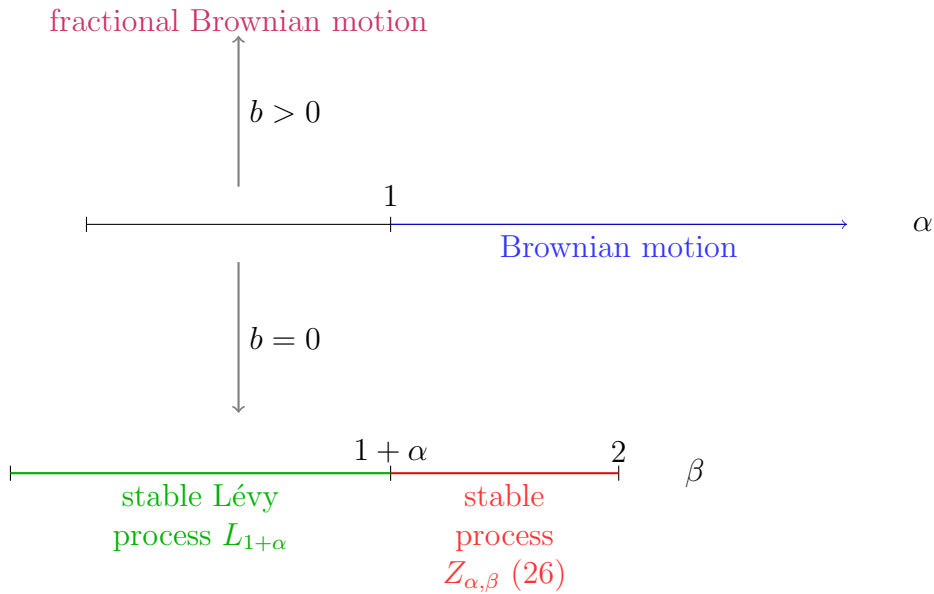
\begin{figure}[h!]
\centering
\begin{tikzpicture}
\draw (0,0) -- (10,0);
\node[] at (11,0) {$\alpha$};
\draw (0,-0.1) -- (0,0.1);
\draw (4,-0.1) -- (4,0.1);
\draw[->, blue, opacity=0.8](4,0) to (10,0);
\node[align=center, above] at (4,0.1) {$1$};
\node[align=center, below, blue, opacity=0.8] at (7,0) {Brownian motion};
\draw[->, thick,  gray](2, 0.5) to (2, 2.5);
\node[align=center, below, purple, opacity=0.8] at (2,3) {fractional Brownian motion};
\draw[->, thick, gray](2,-0.5) to (2,-2.5);
\node[align=center, right] at (2,1.5) {$b>0$};
\node[align=center, right] at (2,-1.5) {$b=0$};
\draw (-1,-3.3) -- (7,-3.3);
\draw (-1,-3.2) -- (-1,-3.4);
\draw (7,-3.2) -- (7,-3.4);
\draw (4,-3.2) -- (4,-3.4);
\node[align=center, above] at (7,-3.3) {$2$};
\node[align=center, above] at (4,-3.3) {$1+\alpha$};
\node[] at (8,-3.3) {$\beta$};
\node[align=center,  below, red, opacity=0.8, text width=0.15\textwidth] at (5.5,-3.3) {stable process $Z_{\alpha,\beta}$ \eqref{Zalphabeta}};
\node[align=center, below, black!30!green, text width=0.15\textwidth] at (1.5,-3.3) {stable L\'evy process $L_{1+\alpha}$};
\draw[thick, black!30!green] (-1,-3.3) -- (4,-3.3);
\draw[thick, red, opacity=0.8] (4,-3.3) -- (7,-3.3);
\end{tikzpicture}	
\caption{Classification of limits of $X^*$}
\label{fig:limits1}
\end{figure}
\FloatBarrier

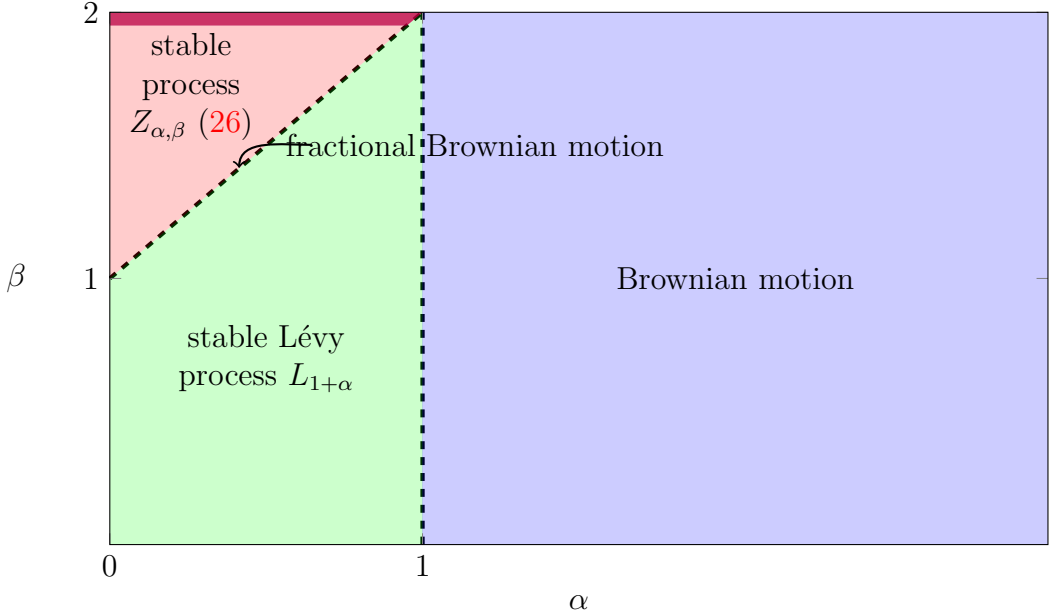
\begin{figure}[h!]
\centering
\begin{tikzpicture}
\begin{axis}[
width=0.87\textwidth,
height=0.54\textwidth,
xmin=0, xmax=3, ymin=0, ymax=2,
xlabel=$\alpha$, 
ylabel=$\beta$, ylabel style={rotate=-90}, 
xtick={0,1},
xticklabels={$0$,$1$},
ytick={1,2},
axis on top
]
\addplot[dashed, ultra thick] coordinates {(1,0) (1,2)};
\addplot[dashed, ultra thick] coordinates {(0,1) (1,2)};
\fill[blue, opacity=0.2] (axis cs:1,0) -- (axis cs:3,0) -- (axis cs:3,2) -- (axis cs:1,2);
\node[] at (axis cs:2,1) {Brownian motion};
\fill[green, opacity=0.2] (axis cs:0,0) -- (axis cs:1,0) -- (axis cs:1,2) -- (axis cs:0,1);
\node[align=center, text width=0.15\textwidth] at (axis cs:0.5,0.7) {stable L\'evy process $L_{1+\alpha}$};
\fill[red, opacity=0.2] (axis cs:0,1) -- (axis cs:0.95,1.95) -- (axis cs:0,1.95);
\node[align=center, text width=0.15\textwidth] at (axis cs:0.26,1.72) {stable process $Z_{\alpha,\beta}$ \eqref{Zalphabeta}};
\fill[purple, opacity=0.8] (axis cs:0,1.95) -- (axis cs:0.95,1.95) -- (axis cs:1,2.0) -- (axis cs:0,2.0);
\end{axis}
\node[] at (4.8,5.3) {fractional Brownian motion};
\node (source) at (2.8,5.3){};
\node (destination) at (1.7,4.85){};
\draw[->, thick](source) to [out=180,in=90] (destination);
\end{tikzpicture}	
\caption{Classification of limits of $X^*$}
\label{fig:limits2}
\end{figure}

Instead of using integrability of the correlation function, we may classify short-range and long-range dependence based on the dependence of increments of the limiting process. This way we could regard the case $1+\alpha>\beta$ as short-range dependence (Theorem \ref{thm:SLPcase}) and $1+\alpha<\beta$ as long-range dependence (Theorem \ref{thm:Zcase}). This implicitly includes the case $\beta=2$ when a Gaussian component is present which yields short-range dependence for $\alpha>1$ (Theorem \ref{thm:BMcase}) and long-range dependence for $\alpha<1$ (Theorem \ref{thm:FBMcase}).

Theorems \ref{thm:FBMcase}-\ref{thm:BMcase} establish convergence of finite dimensional distributions of normalized integrated process. The next theorem shows that in some cases the convergence may be extended to weak convergence in a suitable function space. Since we deal with the limits of the integrated process \eqref{integratedsupOU} which is continuous, we consider weak convergence in the space $C[0,1]$ of continuous function equipped with the uniform topology.

\begin{theorem}\label{thm:funconv}
The convergence in Theorems \ref{thm:FBMcase} and \ref{thm:Zcase} extends to the weak convergence in the space $C[0,1]$. The same is true for the convergence in Theorem \ref{thm:BMcase} if it additionally holds that $\E|X(t)|^4<\infty$ and $\int_0^\infty \xi^{-2} \pi (d\xi) <\infty$. 
\end{theorem}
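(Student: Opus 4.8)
The plan is to upgrade the finite-dimensional convergence already established in Theorems \ref{thm:FBMcase}, \ref{thm:Zcase} and \ref{thm:BMcase} to weak convergence in $C[0,1]$ by verifying tightness, since in $C[0,1]$ the combination of convergence of finite dimensional distributions and tightness yields weak convergence (Prohorov's theorem together with the characterization of relative compactness in $C[0,1]$). Because each normalized process $Y_T(t) = X^*(Tt)/A_T$ has continuous paths and $Y_T(0)=0$, the standard sufficient condition for tightness is a Kolmogorov--Chentsov type moment bound of the form
\begin{equation*}
\E \left| Y_T(t) - Y_T(s) \right|^{\gamma} \leq C \left| t - s \right|^{1+\delta}
\end{equation*}
for some $\gamma, \delta > 0$ and a constant $C$ uniform in $T$. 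First I would fix the value of $\gamma$ appropriate to each case: in the Brownian and fractional Brownian cases one can use $\gamma = 4$ because a fourth moment is available (and is precisely what the extra hypotheses $\E|X(t)|^4 < \infty$ and $\int_0^\infty \xi^{-2} \pi(d\xi) < \infty$ secure in the Brownian case), while in the $Z_{\alpha,\beta}$ case, where the limit has infinite variance, one is forced to work with a fractional moment $\gamma = 1+\alpha$, exactly the exponent already used to establish path continuity of $Z_{\alpha,\beta}$ via $(1-\alpha/\beta)(1+\alpha)>1$.

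The core of the argument is thus to estimate $\E|X^*(Tt) - X^*(Ts)|^{\gamma}$ and show, after dividing by $A_T^{\gamma}$, that it is bounded by $C|t-s|^{1+\delta}$ uniformly in $T$. By stationarity of the increments of $X^*$ (inherited from the stationarity of $X$), it suffices to control $\E|X^*(Tu)|^{\gamma}$ for $u = t-s$ and track its dependence on $Tu$ against the normalization $A_T^{\gamma}$. In the fractional Brownian and Brownian cases I would compute the relevant moments through the cumulant function, using the variance formula and, for $\gamma=4$, the fourth cumulant; the key is that these moments scale as a regularly varying function of the time argument with the correct index, so that $\E|X^*(Tu)|^{\gamma}/A_T^{\gamma}$ is asymptotically $C u^{\gamma H}$ with $\gamma H > 1$ (here $H = 1-\alpha/2$ or $H=1/2$), giving the required power of $|t-s|$. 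In the stable case of Theorem \ref{thm:Zcase} I would bound the fractional moment $\E|X^*(Tu)|^{1+\alpha}$ using the explicit cumulant representation of the finite-dimensional distributions together with a moment inequality for stable-type integrals, again exploiting the self-similarity index $H = 1-\alpha/\beta$ so that the bound is of order $(Tu)^{(1+\alpha)H}$ and matches $A_T^{1+\alpha}$.

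The main obstacle I anticipate is making the moment bounds \emph{uniform in $T$} rather than merely asymptotic: the scaling-function results quoted in the introduction show that the true rate of growth of $\E|X^*(t)|^q$ can differ from the self-similar rate $t^{Hq}$ precisely in the intermittent regime, so for large $q$ one cannot expect $\E|X^*(Tu)|^{\gamma}/A_T^{\gamma}$ to stay bounded. This is exactly why the theorem restricts the moment orders as it does and why the extra integrability hypotheses appear in the Brownian case --- they guarantee that the chosen $\gamma$ lies below the intermittency change-point, so that the relevant moment genuinely scales like the self-similar rate and the prefactor $C$ can be taken independent of $T$. The delicate technical point will be to pass from the small-time behavior of $\pi$ near the origin (through \eqref{regvarofp}) and of $\mu_L$ near the origin (through \eqref{LevyMCond}) to a genuinely uniform two-sided control of the increment moments, handling separately the contribution of small and large values of the integration variable $\xi$; once that uniform bound is in hand, Kolmogorov--Chentsov and Prohorov close the argument in each of the three cases.
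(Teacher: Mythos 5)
Your overall skeleton agrees with the paper's proof: by stationarity of increments and Billingsley's criterion, everything reduces to a bound $\E|A_T^{-1}X^*(Tt)|^{\gamma}\leq C t^{a}$ with $a>1$, uniformly in $T$, and the paper indeed establishes this with $\gamma=4$ in the Brownian case (via the variance formula and a fourth-cumulant computation, which is exactly what the extra hypotheses $\E|X(t)|^4<\infty$ and $\int_0^\infty \xi^{-2}\pi(d\xi)<\infty$ enable) and with a fractional moment $q<\beta$ in the case of Theorem \ref{thm:Zcase} (your choice $\gamma=1+\alpha$ is admissible there, since $(1+\alpha)(1-\alpha/\beta)>1$; the paper takes any $q\in\left(\beta/(\beta-\alpha-\delta),\beta\right)$ and converts the cumulant bound $|\kappa_{X^*}(A_T^{-1}\zeta,Tt)|\leq C t^{\beta-\alpha-\delta}|\zeta|^{\beta}$, obtained from \eqref{PPSprop2} and Potter's bounds, into a moment bound by symmetrization and the representation of Lemma \ref{lemma:bound} --- this is the precise form of the ``moment inequality for stable-type integrals'' you leave unspecified, and it is needed because $X^*(Tt)$ is not itself a stable integral for finite $T$).

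The genuine gap is your choice $\gamma=4$ in the fractional Brownian case. First, Theorem \ref{thm:FBMcase} does not assume fourth moments of $X$ exist. Second, and more fundamentally, even when they do exist the bound you want is false: by Theorem \ref{thm:momFBMcase}, when $\mu_L\not\equiv 0$ one has $\tau_{X^*}(4)=4-\alpha$, whereas $A_T^4=T^{4-2\alpha}\ell(T)^2$, so $\E|A_T^{-1}X^*(T)|^4$ grows like $T^{\alpha}$ up to slowly varying factors and no $T$-uniform Kolmogorov--Chentsov bound at order four can hold. Your own closing caveat --- that $\gamma$ must lie below the intermittency change-point --- is correct but applied to the wrong case: in the fractional Brownian regime the change-point is $q=2$, so $\gamma=4$ sits strictly above it, making the proposal internally inconsistent on this point. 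The repair is the paper's actual argument: take $\gamma=2$. Since $H=1-\alpha/2>1/2$, the second moment already carries a superlinear exponent: from the variance formula \eqref{varianceformula}, the representation $p(x)=\alpha\widetilde{\ell}(x^{-1})x^{\alpha-1}$ and Potter's bounds one gets $\E\left(A_T^{-1}X^*(Tt)\right)^2\leq C t^{2-\alpha-\delta}$ with $2-\alpha-\delta>1$ for $\delta<1-\alpha$, which verifies \eqref{tightnessmomcrit} without ever invoking moments beyond the variance.
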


In Theorem \ref{thm:SLPcase}, the limit is stable L\'evy motion. As noted by \cite{konstantopoulos1998macroscopic}, if a sequence of continuous processes converges, in the sense of finite dimensional distributions, to a limit which is discontinuous with positive probability, then the convergence cannot be extended to weak convergence in the space of c\`{a}dl\`{a}g functions $D[0,1]$ equipped with Skorokhod's $J_1$ topology \cite[p.~236]{konstantopoulos1998macroscopic}. Possibly the convergence holds in $D[0,1]$ equipped with the weaker $M_1$ topology or  the non-Skorohodian $S$ topology (see \cite{jakubowski1997non}). For such results in related models or limiting schemes see  \cite{doukhan2016discrete}, \cite{philippe2014contemporaneous}, \cite{resnick2000weak}.

\subsection{Examples}\label{examples:subsec}
In this subsection we list several examples of supOU processes and show how Theorems \ref{thm:FBMcase}--\ref{thm:BMcase} apply. In each example we will choose a background driving L\'evy process $L$ such that $L(1)$ is from some parametric class of infinitely divisible distributions. On the other hand, $\pi$ may be any absolutely continuous probability measure satisfying \eqref{regvarofp}. For example, $\pi $ can be Gamma distribution with density
\begin{equation*}
  f(x)= \frac{1}{\Gamma(\alpha)} x^{\alpha-1} e^{-x} \mathbf{1}_{(0,\infty)}(x),
\end{equation*}
where $\alpha>0$. Then
\begin{equation*}
  \pi ((0,x]) \sim \frac{1}{\Gamma(\alpha+1)} x^{\alpha}, \quad \text{ as } x \to 0.
\end{equation*}
Other examples of distributions satisfying \eqref{regvarofp} can be found in \cite{GLST2017Arxiv}.

For the limiting behavior of the corresponding integrated supOU process, the L\'evy-Khintchine triplet \eqref{e:triplet} of $L(1)$ will be important. In particular, for each case we will consider the value of $\beta$ defined in \eqref{LevyMCond} or the value of the Blumenthal-Getoor index \eqref{BGindex} of the L\'evy measure $\mu_L$. 

Note that one could also construct examples by choosing a marginal distribution of the supOU process instead of choosing the distribution of $L(1)$. Indeed, each distribution used in the examples below is self-decomposable, hence there exists a background driving L\'evy process generating a supOU process with such marginal distribution. By using the correspondence \eqref{muXcond}, one can easily check the conditions involving behavior of the L\'evy measure near origin.

\subsubsection{Compound Poisson background driving L\'evy process}
Suppose $L$ is a compound Poisson process with rate $\lambda>0$ and jump distribution $F$ having finite variance and zero mean. Suppose $X$ is a supOU process with the background driving L\'evy process $L$ and $\pi$ absolutely continuous probability measure satisfying \eqref{regvarofp}. The characteristic quadruple \eqref{quadruple} is then $(a,0,\mu_L,\pi)$ where
\begin{equation*}
a=\lambda \int_{|x|\leq 1} x F(dx), \qquad \mu_L(dx) = \lambda F(dx).
\end{equation*}
Since the L\'evy measure is finite, the Blumenthal-Getoor index \eqref{BGindex} is $0$. By Theorems \ref{thm:SLPcase} and \ref{thm:BMcase}, the limit of normalized integrated process is Brownian motion if $\alpha>1$ or stable L\'evy process with index $1+\alpha$ if $\alpha<1$.

\subsubsection{Normal inverse Gaussian background driving L\'evy process}
The normal inverse Gaussian distribution with shape parameter $A>0$, skewness parameter $|B|<A$, location parameter $C\in \R$ and scale parameter $D$ is given by the density
\begin{equation*}
f(x) = \frac{A}{\pi} e^{D \sqrt{A^2-B^2}} \left(1+\left(\frac{x-\mu}{D}\right)^2 \right)^{-1/2} K_1 \left( AD \sqrt{1+\left(\frac{x-\mu}{D}\right)^2 } \right) e^{B(x-C)}
\end{equation*}
for $x\in \R$, where $K_1$ is the modified Bessel function of third kind (see e.g.\cite{barndorff1997processes}, \cite{eberlein2004generalized}). The normal inverse Gaussian distributions are infinitely divisible and have all positive order moments finite. The density of the L\'evy measure is asymptotically equivalent to $\delta/\pi x^{-2}$ as $x\to 0$ (see \cite{eberlein2004generalized}), hence the Blumenthal-Getoor index \eqref{BGindex} is $\beta_{BG}=1$. Consider now a supOU process generated by the normal inverse Gaussian background driving L\'evy process. Since $\beta_{BG}<1+\alpha$, we conclude as in the compound Poisson case that the possible limits are Brownian motion if $\alpha>1$ or $(1+\alpha)$-stable L\'evy process if $\alpha<1$.

\subsubsection{Tempered stable background driving L\'evy process}
Let $L$ be a tempered stable L\'evy process, that is a L\'evy process with L\'evy-Khintchine triplet $(a,0,\mu_L)$ where $\mu_L$ is absolutely continuous with density given by
\begin{equation*}
g(y) = \frac{c^-}{|x|^{1+\beta}} e^{-\lambda^-|x|} \1_{(-\infty,0)}(x) + \frac{c^+}{x^{1+\beta}} e^{-\lambda^+ x} \1_{(0,\infty)}(x),
\end{equation*}
where $c^->0$, $c^+>0$, $\lambda^->0$, $\lambda^+>0$ and $\beta \in (0,2)$ (see e.g.~\cite[Section 4.5]{rama2003financial}). All moments of tempered stable distributions are finite and the L\'evy measure satisfies \eqref{LevyMCond} with $\beta \in (0,2)$. If $\alpha > 1$, then by Theorem \ref{thm:BMcase} the corresponding integrated supOU process converges to Brownian motion. In the case $\alpha<1$, the limit is $(1+\alpha)$-stable L\'evy process if $\beta < 1+\alpha$ (Theorem \ref{thm:SLPcase}), but if $\beta>1+\alpha$, then the limit is $\beta$-stable process \eqref{Zalphabeta} (Theorem \ref{thm:Zcase}).

\section{Moment behavior and intermittency}\label{sec3}

In this section we establish the asymptotic behavior of absolute moments of the integrated supOU process. More precisely, we investigate the scaling function of the integrated process
\begin{equation}\label{deftau}
\tau_{X^*}(q) = \lim_{t\to \infty} \frac{\log \E |X^*(t)|^q}{\log t}.
\end{equation}
We will assume throughout that the cumulant function $\kappa_{X}$ is analytic in the neighborhood of the origin. According to \cite[Theorem 7.2.1]{lukacs1970characteristic}, this is equivalent to the exponential decay of tails of the distribution of $X$. In particular, all moments are finite and the scaling function \eqref{deftau} will be well defined. Many infinitely divisible distributions satisfy this condition, for example, inverse Gaussian, normal inverse Gaussian, gamma, variance gamma, tempered stable (see \cite{GLST2017Arxiv} for details). It is worth noting that the same results could be obtained by assuming only that the moments exists up to some order, however, this would significantly complicate the exposition. Note also that the analyticity assumption does not affect the choice of $\pi$.

We noted in the introduction that integrated supOU processes may exhibit intermittency. As we will see, for the non-Gaussian supOU process with zero mean such that \eqref{regvarofpi} holds for some $\alpha>0$ we have that
\begin{equation}\label{tausupOUqqstar}
\tau_{X^*}(q) = q- \alpha, \quad \forall q \geq q^*,
\end{equation}
where $q^*$ is the smallest even integer greater than $2\alpha$ \cite[Theorem 7]{GLST2017Arxiv}. Hence, $q\mapsto \tau_{X^*}(q)/q$ is strictly increasing on $[q^*,\infty)$. On the other hand, there can be no normalizing sequence $A_T$ such that the normalized $q$-th moment $\E|X^*(T)/A_T|^q$ converges for every $q\geq q^*$. Indeed, if this normalized moment would converge $\E|X^*(T)/A_T|^q \to C_q$ for some $q$ as $T\to \infty$, then it follows that
\begin{equation*}
\log T \left( \frac{1}{q} \frac{\log \E|X^*(T)|^q}{\log T} -\frac{\log A_T}{ \log T} \right) \to \frac{1}{q} \log C_q, \quad T \to \infty,
\end{equation*}
which implies that $\log A_T/\log T \to \tau_{X^*}(q)/q$. Clearly, this is impossible to hold for more than one value of $q$, unless $\tau_{X^*}(q)/q$ is constant. Therefore we cannot have a limit theorem, the convergence of moments \eqref{limitformmom} and the unusual behavior of moments \eqref{tausupOUqqstar}. However, as the results of Section \ref{sec2} show, even when this unusual behavior of moments is present, it is still possible that a limit theorem holds after suitable normalization. What must fail to hold then is the convergence of moments \eqref{limitformmom}. Thus the convergence of moments \eqref{limitformmom} must not hold beyond some critical value of $q$. 

The purpose of this section is to provide a closer inspection of the behavior of moments in connection with the results of Section \ref{sec2}.

As in Section \ref{sec2}, we start with the case when $\alpha<1$ in \eqref{regvarofp}. First, we consider the setting of Theorem \ref{thm:FBMcase} where $\alpha \in (0,1)$ and where the limit is fractional Brownian motion.

\begin{theorem}\label{thm:momFBMcase}
Suppose that the assumptions of Theorem \ref{thm:FBMcase} hold, in particular $\alpha \in (0,1)$, and suppose $\mu_L \not\equiv 0$. Then
\begin{equation}\label{e:FBMtau1}
\tau_{X^*}(q) = \begin{cases}
\left( 1 - \frac{\alpha}{2}\right) q, & 0<q\leq 2,\\
q-\alpha, & q\geq 2.
\end{cases}
\end{equation}
If $\mu_L \equiv 0$, then $X^*$ is Gaussian and 
\begin{equation}\label{e:FBMtau0}
\tau_{X^*}(q) = \left( 1 - \frac{\alpha}{2}\right) q, \quad \forall q>0.
\end{equation}
\end{theorem}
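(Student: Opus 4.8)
The plan is to establish the two cases of \eqref{e:FBMtau1} separately on the ranges $q\ge 2$ and $0<q\le 2$, first noting that both formulas give $2-\alpha$ at $q=2$, so the resulting function is continuous there. For $q\ge 2$ I would simply invoke the general scaling result. Since $\alpha\in(0,1)$ forces $2\alpha<2$, the smallest even integer exceeding $2\alpha$ is $q^*=2$; as $X^*$ is non-Gaussian (because $\mu_L\not\equiv 0$) with zero mean and $\pi$ satisfies \eqref{regvarofp}, hence \eqref{regvarofACR}, the scaling relation \eqref{tausupOUqqstar} of \cite[Theorem 7]{GLST2017Arxiv} applies with $q^*=2$ and yields $\tau_{X^*}(q)=q-\alpha$ for \emph{all} real $q\ge 2$ at once, with no need to interpolate between even integers.

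For $0<q\le 2$ I would bound $\tau_{X^*}$ from both sides. Karamata's theorem applied to $\Var X^*(t)=2\sigma^2\int_0^t (t-u)\,r(u)\,du$ together with \eqref{regvarofACR} gives the variance asymptotics $\E X^*(t)^2=\Var X^*(t)\sim C\,t^{2-\alpha}\ell(t)$ for some positive constant $C$ (only the exponent matters). Since $\E X^*(t)=0$, Lyapunov's inequality gives $\E|X^*(t)|^q\le(\E X^*(t)^2)^{q/2}$ for $0<q\le 2$, so, using $\log\ell(t)/\log t\to 0$, one obtains $\limsup_{t\to\infty}\log\E|X^*(t)|^q/\log t\le \tfrac{q}{2}(2-\alpha)=q(1-\alpha/2)$.

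For the matching lower bound I would use the limit theorem itself rather than a moment computation. Setting $t=1$ in Theorem \ref{thm:FBMcase}, the marginal $X^*(T)/A_T$ with $A_T=T^{1-\alpha/2}\ell(T)^{1/2}$ converges in distribution to $\widetilde\sigma B_H(1)$; as $x\mapsto|x|^q$ is continuous and nonnegative, the Portmanteau theorem (Fatou's inequality for weak convergence) gives $\liminf_{T\to\infty}\E|X^*(T)/A_T|^q\ge \widetilde\sigma^q\,\E|B_H(1)|^q=:c_q>0$, whence $\E|X^*(T)|^q\ge\tfrac12 c_q A_T^q$ for large $T$ and $\liminf_T\log\E|X^*(T)|^q/\log T\ge q(1-\alpha/2)$. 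Combined with the upper bound this proves \eqref{e:FBMtau1} on $(0,2]$; note that this argument needs only weak convergence, not convergence of moments, which in fact already fails at $q=2$, consistently with $q^*=2$. In the Gaussian case $\mu_L\equiv 0$, the process $X^*(t)$ is centered Gaussian, so $\E|X^*(t)|^q=\E|N(0,1)|^q\,(\Var X^*(t))^{q/2}$ for every $q>0$, and the same variance asymptotics give $\tau_{X^*}(q)=q(1-\alpha/2)$ for all $q>0$, which is \eqref{e:FBMtau0}.

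The genuine difficulty is concentrated in the branch $q\ge 2$, which I import from \cite[Theorem 7]{GLST2017Arxiv}: the lower bound $\tau_{X^*}(q)\ge q-\alpha$ there rests on the moment--cumulant expansion of $X^*(t)$, the cumulant asymptotics $C_n(X^*(t))\sim c_n\,t^{n-\alpha}\ell(t)$ for $n\ge 2$, and the observation that among all partitions the single-block one dominates the moment sum — which is exactly where $\mu_L\not\equiv 0$ enters, since it forces $c_{2k}>0$. By contrast, everything in the range $q\le 2$ is comparatively soft, the only point demanding care being that one must use weak convergence rather than convergence of moments for the lower bound.
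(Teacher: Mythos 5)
Your proposal is correct, but on the range $0<q\leq 2$ and in the Gaussian case it follows a genuinely different route from the paper. Both arguments dispose of $q\geq 2$ identically, by citing \eqref{tausupOUqqstar} with $q^*=2$ (and you are right that this gives all real $q\geq 2$ directly). For $0<q\leq 2$, however, the paper only computes the single moment order $q=1$: it lower-bounds $\tau_{X^*}(1)\geq\tau_{X_1^*}(1)=1-\alpha/2$ via the decomposition \eqref{e:thmFBMdecompostioninproof} and the inequality $\E|X_1^*(t)|\leq\E|X_1^*(t)+X_2^*(t)|$ (valid since $\E X_2^*(t)=0$), upper-bounds $\tau_{X^*}(1)$ by convexity of the scaling function, and then deduces linearity on $[0,2]$ from convexity through the three collinear points $(0,0)$, $(1,1-\alpha/2)$, $(2,2-\alpha)$. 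You instead establish two-sided bounds at \emph{every} $q\in(0,2]$: the upper bound by Lyapunov's inequality together with the Karamata asymptotics $\Var X^*(t)\sim Ct^{2-\alpha}\ell(t)$, and the lower bound by Portmanteau/Fatou applied to the nondegenerate weak limit of Theorem \ref{thm:FBMcase} at $t=1$ (here $b>0$ is what makes $\widetilde{\sigma}>0$, playing the role that the Gaussian component $X_1^*$ plays in the paper's Jensen argument). Your route buys elementarity and transparency: it avoids the convexity/collinearity machinery from \cite{GLST2016JSP,GLST2017Arxiv}, makes explicit that only weak convergence --- never convergence of moments --- is needed on this range, and isolates the general principle that a nondegenerate weak limit under regularly varying normalization of index $H$ forces $\tau_{X^*}(q)\geq qH$. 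The paper's route buys economy (one moment order suffices) and reuses the variance computation \eqref{slowlyvarying under int} already done in the proof of Theorem \ref{thm:FBMcase}. In the case $\mu_L\equiv 0$ you again shortcut the paper, which verifies convergence of all even moments and invokes \cite[Theorem 1]{GLST2017Arxiv}, whereas the exact Gaussian identity $\E|X^*(t)|^q=\E|N(0,1)|^q(\Var X^*(t))^{q/2}$ plus the same variance asymptotics gives \eqref{e:FBMtau0} immediately; both are valid. One cosmetic caveat: your parenthetical that moment convergence ``fails at $q=2$'' should be read in the sense of \eqref{limitformmom} --- by \eqref{e:momFBM:part2} the normalized second moment stays bounded but converges to a constant strictly larger than $\E|\widetilde{\sigma}B_H(t)|^2$ when $\mu_L\not\equiv 0$ --- which is indeed consistent with your (and the paper's) use of only weak convergence there.
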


It is interesting to note how intermittency appears in the setting of Theorem \ref{thm:momFBMcase}. Let $X^*_1$, $X^*_2$ denote the decomposition \eqref{e:thmFBMdecompostioninproof} of $X^*$ as in the proof of Theorem \ref{thm:FBMcase}, corresponding to Gaussian and pure jump part of the underlying L\'evy basis, respectively. With normalizing sequence $A_T=T^{1-\alpha/2 } \ell(T)^{1/2}$, we have for the Gaussian part $X_1^*$ and $t>0$
\begin{equation*}
A_T^{-1} X^*_1(Tt) \overset{d}{\to} \widetilde{\sigma} B_{1-\alpha/2} (t),
\end{equation*}
and (see the proof of Theorem \ref{thm:momFBMcase})
\begin{equation*}
\E \left| A_T^{-1} X^*_1(Tt) \right|^q \to \E \left| \widetilde{\sigma} B_{1-\alpha/2} (t) \right|^q, \quad \forall q>0,
\end{equation*}
where 
\begin{equation*}
\widetilde{\sigma}^2= b \frac{\Gamma(1+\alpha)}{(2-\alpha)(1-\alpha)}.
\end{equation*}
Consider now the L\'evy component $X_2^*$ for which we have
\begin{equation*}
A_T^{-1} X^*_2(Tt) \overset{P}{\to} 0,
\end{equation*}
by using the normalization $A_T$ as above. Borrowing the term from \cite{doukhan2016discrete}, we may call the process $\{A_T^{-1} X^*_2(Tt),\, T>0\}$ \textit{evanescent}. However, its moments are far from negligible in the limit since
\begin{equation*}
\E \left| A_T^{-1} X^*_2(Tt) \right|^q \to \infty, \quad \forall q>2.
\end{equation*}
We conclude that it is the component $X^*_2$ which is responsible for the unusual limiting behavior of moments. Note, however, that by Theorems \ref{thm:SLPcase} and \ref{thm:Zcase}, $X^*_2$ can still be normalized to obtain a limit with stable non-Gaussian distribution. The appropriate normalization is of an order lower than $A_T$ since
\begin{equation*}
\frac{1}{1+\alpha} < 1-\frac{\alpha}{\beta} < 1- \frac{\alpha}{2},
\end{equation*}
with the notation of Theorems \ref{thm:SLPcase} and \ref{thm:Zcase}. Note also that the variance of $A_T^{-1} X^*_2(Tt)$ converges to a finite constant. Indeed, taking the second derivative of $\kappa_{X_2^*(Tt)}(\zeta)$ in \eqref{e:kappa:x2normalized} below and letting $\zeta\to 0$ we get by using \eqref{e:norm1} and \eqref{slowlyvarying under int} that
\begin{align}
A_T^{-2} \E X_2^*(Tt)^2 &= 2 \E X_2(1)^2 A_T^{-2} \int_0^\infty \int_0^{Tt} \xi^{-1} \left(1- e^{-\xi (Tt-s)} \right) ds \pi(d\xi)\nonumber \\
&= 2 \E X_2(1)^2 A_T^{-2}  \int_0^{\infty} \left( 1 -e^{-w} \right) \int_{w/(Tt)}^{\infty} \xi^{-2} \pi(d\xi) dw \nonumber \\
&\to 2 \E X_2(1)^2 t^{2-\alpha} \frac{\Gamma(1+\alpha)}{(2-\alpha)(1-\alpha)}, \label{e:momFBM:part2}
\end{align}
as $T\to \infty$. In particular, $\{A_T^{-q} \left| X^*_2(Tt)\right|^q\}$ is not uniformly integrable for $q\geq 2$.

The following simple example replicates the type of behavior we encounter with $X^*(T)$ in Theorem \ref{thm:momFBMcase}. Suppose $\{Y_T, \, T\geq 1\}$ is a sequence of random variables such that
\begin{equation*}
Y_T=\begin{cases}
T^{1-\alpha/2}, \ \text{ with probability } 1-T^{-\alpha},\\
T, \ \text{ with probability } T^{-\alpha},
\end{cases}
\end{equation*}
where $\alpha \in (0,1)$. Then, since $\E Y_T^q = T^{(1-\alpha/2)q} (1-T^{-\alpha}) + T^{q-\alpha}$, we have that $\E Y_T^q \sim T^{(1-\alpha/2)q}$ for $q\leq 2$ and $\E Y_T^q \sim T^{q-\alpha}$ for $q>2$. With suitable normalization we have
\begin{equation*}
T^{-1+\alpha/2} Y_T=\begin{cases}
1, \ \text{ with probability } 1-T^{-\alpha},\\
T^{\alpha/2}, \ \text{ with probability } T^{-\alpha},
\end{cases}
\end{equation*}
hence $T^{-1+\alpha/2} Y_T \overset{d}{\to} 1$. However, for the moments it holds that
\begin{equation*}
\E \left( T^{-1+\alpha/2} Y_T \right)^q = 1-T^{-\alpha} + T^{\alpha\left(q/2-1\right)}\to \begin{cases}
1, & q< 2,\\
2, & q=2,\\
\infty, & q>2,
\end{cases}
\end{equation*}
because $Y_T$ exhibits increasingly large values albeit with decreasing probability. This type of behavior is intensively studied for random fields arising from stochastic partial differential equations (see e.g.~\cite{carmona1994parabolic,chong2017intermittency,khoshnevisan2014analysis} and the references therein).

The following two theorems describe the scaling function when there is no Gaussian component. The limiting processes, given in Theorems \ref{thm:SLPcase} and \ref{thm:Zcase}, have a stable distribution and are $H$-self-similar. The slopes of the scaling functions involve the self-similarity index $H$ of the limiting process, respectively $1/(1+\alpha)$ and $(1-\alpha/\beta)$.

\begin{theorem}\label{thm:momSLPcase}
Suppose that the assumptions of Theorem \ref{thm:SLPcase} hold, in particular $\alpha \in (0,1)$ and $\beta_{BG} < 1+\alpha$. Then
\begin{equation}\label{e:SLPtau}
\tau_{X^*}(q) = \begin{cases}
\frac{1}{1+\alpha} q, & 0<q\leq 1+\alpha,\\
q-\alpha, & q\geq 1+\alpha.
\end{cases}
\end{equation}
\end{theorem}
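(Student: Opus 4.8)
The plan is to establish the two affine branches separately and then bridge the gap between them by convexity. First observe that the two candidate rates satisfy $q/(1+\alpha)\geq q-\alpha$ for $q\leq 1+\alpha$ and $q/(1+\alpha)\leq q-\alpha$ for $q\geq 1+\alpha$, with equality exactly at the break-point $q=1+\alpha$; hence the claim is equivalent to $\tau_{X^*}(q)=\max\{q/(1+\alpha),\,q-\alpha\}$. The slope $1/(1+\alpha)$ is the self-similarity index of the limit $L_{1+\alpha}$ in Theorem \ref{thm:SLPcase}, while $q-\alpha$ is the intermittent rate \eqref{tausupOUqqstar}; the task is to show these are the only two regimes and to locate the transition.

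For the lower branch $0<q\leq 1+\alpha$ I would proceed as follows. Write $A_T=T^{1/(1+\alpha)}\ell^{\#}(T)^{1/(1+\alpha)}$, so $\log A_T/\log T\to 1/(1+\alpha)$ because $\ell^{\#}$ is slowly varying. By Theorem \ref{thm:SLPcase}, $X^*(T)/A_T\Rightarrow L_{1+\alpha}(1)$, a nondegenerate $(1+\alpha)$-stable variable with $\E|L_{1+\alpha}(1)|^q\in(0,\infty)$ for $q<1+\alpha$. Since $y\mapsto|y|^q$ is continuous and nonnegative, the portmanteau theorem gives $\liminf_T\E|X^*(T)/A_T|^q\geq\E|L_{1+\alpha}(1)|^q>0$, hence $\liminf_T\log\E|X^*(T)|^q/\log T\geq q/(1+\alpha)$. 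For the matching upper bound I would establish uniform integrability, namely $\sup_T\E|X^*(T)/A_T|^{q'}<\infty$ for every $q'<1+\alpha$; this upgrades weak convergence to convergence of moments, giving $\E|X^*(T)/A_T|^q\to\E|L_{1+\alpha}(1)|^q$ and therefore $\tau_{X^*}(q)=q/(1+\alpha)$ for all $q<1+\alpha$.

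The uniform moment bound is the main obstacle. Since $b=0$, the variable $X^*(T)=\int g_T\,d\Lambda$ is purely non-Gaussian infinitely divisible with L\'evy measure $\nu_T$ obtained by pushing $\mu_L$ forward through the kernel $g_T(\xi,u)$ and integrating against $\pi(d\xi)\,du$. For $q'\in(0,2)$ I would split the jumps of the normalized variable at the unit level and invoke a von Bahr--Esseen/Rosenthal-type estimate
\begin{equation*}
\E\left|\frac{X^*(T)}{A_T}\right|^{q'}\leq C\left(\int_{|z|\leq 1}z^2\,\widetilde\nu_T(dz)\right)^{q'/2}+C\int_{|z|>1}|z|^{q'}\,\widetilde\nu_T(dz),
\end{equation*}
where $\widetilde\nu_T(\cdot)=\nu_T(A_T\,\cdot)$ is the L\'evy measure of $X^*(T)/A_T$. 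The defining property of the domain of attraction of $L_{1+\alpha}$ makes the truncated second moment $\int_{|z|\leq 1}z^2\,\widetilde\nu_T$ bounded and the tail integral $\int_{|z|>1}|z|^{q'}\,\widetilde\nu_T$ convergent, both uniformly in $T$, for $q'<1+\alpha$; verifying these two estimates is precisely where the regular variation \eqref{regvarofp} of $\pi$ near the origin and the condition $\beta_{BG}<1+\alpha$ enter, and it is the technically delicate step.

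For the upper branch $q\geq 1+\alpha$ I would combine the known intermittent rate with convexity. Since the marginal is non-Gaussian ($\mu_L\not\equiv 0$), has zero mean, and $\pi$ satisfies \eqref{regvarofpi} with $\alpha\in(0,1)$, the result \eqref{tausupOUqqstar} of \cite{GLST2017Arxiv} gives $\tau_{X^*}(q)=q-\alpha$ for all $q\geq 2$ (here $q^*=2$), the underlying mechanism being that the leading cumulant of $X^*(t)$ grows like $t^{n-\alpha}$. It then remains to treat the gap $1+\alpha\leq q<2$, which I would do using only that $q\mapsto f_t(q):=\log\E|X^*(t)|^q/\log t$ is convex for each fixed $t>1$ (H\"older). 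For the upper estimate, choose $q_1<1+\alpha$ and $q_3=2$, write $q=\lambda q_1+(1-\lambda)q_3$, take $\limsup_t$ in $f_t(q)\leq\lambda f_t(q_1)+(1-\lambda)f_t(q_3)$, insert the already-proved values, and let $q_1\uparrow 1+\alpha$ to get $\limsup_t f_t(q)\leq q-\alpha$. For the lower estimate, apply convexity to the triple $q<2<4$, i.e.\ to $2=\mu q+(1-\mu)4$, obtaining $f_t(q)\geq\mu^{-1}\left[f_t(2)-(1-\mu)f_t(4)\right]$; taking $\liminf_t$ and substituting $\tau_{X^*}(2)=2-\alpha$, $\tau_{X^*}(4)=4-\alpha$ gives $\liminf_t f_t(q)\geq q-\alpha$. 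The two inequalities show the limit exists and equals $q-\alpha$ throughout the gap (and at the endpoint $q=1+\alpha$, where both formulas return $1$), completing the proof.
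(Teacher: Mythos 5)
Your overall architecture coincides with the paper's: for $q<1+\alpha$, upgrade the weak convergence of Theorem \ref{thm:SLPcase} to convergence of moments via a uniform moment bound, and for $q\geq 1+\alpha$, combine $\tau_{X^*}(q)=q-\alpha$ for $q\geq 2$ (which is \eqref{tausupOUqqstar} with $q^*=2$, since $\alpha\in(0,1)$) with convexity to bridge the gap $[1+\alpha,2)$. Your hand-made sandwich using convexity of $f_t(q)=\log\E|X^*(t)|^q/\log t$ for fixed $t$ is a correct, self-contained rederivation of what the paper gets by citing convexity of $\tau_{X^*}$ \cite[Proposition 2.1]{GLST2016JSP} and the collinear-points lemma \cite[Lemma 3]{GLST2017Arxiv}; your limsup/liminf bookkeeping (including the $q_1\uparrow 1+\alpha$ limit and the triple $q<2<4$) checks out, and the portmanteau lower bound on the lower branch is valid.

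The gap is on the lower branch, at exactly the step you defer. The paper's proof of this theorem consists almost entirely of establishing your uniform bound, and it does so on the Fourier side: reusing the decomposition \eqref{e:thmSLP:decomposition} and the estimates \eqref{e:thmSLPproofX1zero}, \eqref{e:splitI1}--\eqref{e:splitI2} from the proof of Theorem \ref{thm:SLPcase}, it shows the cumulant bound \eqref{e:thmmomSLPcasebound}, namely $|\kappa_{X^*}(A_T^{-1}\zeta,Tt)|\leq C|\zeta|^{\gamma}$ for $|\zeta|\leq 1$ and $\leq C|\zeta|^{1+\alpha+\delta}$ for $|\zeta|>1$, with $\max\{\beta_{BG},1\}<\gamma<1+\alpha$, and then feeds this into the symmetrization inequality \eqref{momtocumfun-inequality} and the von Bahr--Esseen formula of Lemma \ref{lemma:bound} to dominate $\E|A_T^{-1}X^*(Tt)|^q$ by $q$-th moments of $(1+\alpha\mp\varepsilon)$-stable laws. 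Your L\'evy-measure version (truncation at level $1$ plus a Rosenthal-type bound for centered purely non-Gaussian infinitely divisible laws) is a viable dual route, but the two uniform estimates you need cannot be waved through by ``the defining property of the domain of attraction'': weak convergence of $\widetilde{\nu}_T$ to the stable L\'evy measure does not imply $\sup_T\int_{|z|>1}|z|^{q'}\,\widetilde{\nu}_T(dz)<\infty$ (tail moments are not continuous under this convergence), nor does it give uniform control of the truncated second moment. Proving them requires precisely the computations the paper performs: writing $\nu_T$ as the pushforward of $\mu_L(dx)\,ds\,\xi\,\pi(d\xi)$ through the kernel $\xi^{-1}\bigl(1-e^{-\xi(Tt-s)}\bigr)$, splitting off the $s<0$ contribution (your $\Delta X^*_{(1)}$ analogue), applying Potter's bounds to the slowly varying factor in \eqref{regvarofp}, and invoking $\int_{|x|\leq 1}|x|^{\gamma}\mu_L(dx)<\infty$ for some $\gamma<1+\alpha$ --- this is the only place $\beta_{BG}<1+\alpha$ enters. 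So your proposal is a correct skeleton whose decisive technical content is asserted rather than proved; filling it in would essentially reproduce the paper's estimates in L\'evy-measure language.
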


\begin{theorem}\label{thm:momZcase}
Suppose that the assumptions of Theorem \ref{thm:Zcase} hold, in particular $\alpha \in (0,1)$ and $1+\alpha<\beta<2$. Then
\begin{equation}\label{e:Ztau}
\tau_{X^*}(q) = \begin{cases}
\left(1-\frac{\alpha}{\beta} \right) q, & 0<q\leq \beta,\\
q-\alpha, & q\geq \beta.
\end{cases}
\end{equation}
\end{theorem}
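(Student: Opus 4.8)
The plan is to exploit the convexity, in $q$, of the pre-limit quantity together with two computations of $\tau_{X^*}$ on overlapping ranges, letting convexity interpolate the rest. For each fixed $t>1$ the map $q\mapsto \log \E|X^*(t)|^q$ is convex by Lyapunov's inequality, so $g_t(q):=\log \E|X^*(t)|^q/\log t$ is convex for every $t$; the analyticity assumption guarantees all moments are finite, so everything is well defined. The target \eqref{e:Ztau} is exactly $\tau_{X^*}(q)=\max\{(1-\alpha/\beta)q,\,q-\alpha\}$, the upper envelope of two lines meeting at $q=\beta$, which is consistent with convexity. I would therefore identify $\lim_t g_t$ at a set of anchor points, and then, at the remaining $q$, use the convexity of each $g_t$ to sandwich $\liminf_t g_t$ and $\limsup_t g_t$ between chords whose endpoints are anchor points, forcing the limit to exist and equal the claimed value.

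First I would compute the even-integer moments through cumulants. With $f_t(\xi,s)=\int_0^t e^{-\xi r+s}\1_{[0,\infty)}(\xi r-s)\,dr$ the cumulant function factorizes as $\kappa_{X^*}(\zeta,t)=\int_{\R_+}\int_{\R}\kappa_L(\zeta f_t(\xi,s))\,ds\,\pi(d\xi)$, so the $m$-th cumulant of $X^*(t)$ equals $\kappa_{L,m}A_m(t)$, where $A_m(t)=\int_{\R_+}\int_{\R} f_t(\xi,s)^m\,ds\,\pi(d\xi)$ and $\kappa_{L,m}=\int_{\R}x^m\mu_L(dx)$ for $m\ge 2$ (recall $b=0$, and these are finite since $m>\beta=\beta_{BG}$ near the origin and the tails decay exponentially). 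Carrying out the $s$-integral and substituting $u=\xi t-s$ reduces $A_m(t)$ to an integral over $\xi$ whose integrand crosses over at the scale $\xi\sim 1/t$: it is of order $t^{m}$ for $\xi\ll 1/t$ and of order $t\,\xi^{1-m}$ for $\xi\gg 1/t$. Feeding in \eqref{regvarofp} and applying Karamata's theorem to each region shows both contributions have the same order, so $\log A_m(t)/\log t\to m-\alpha$. Since $\kappa_{L,2k}=\int x^{2k}\mu_L(dx)>0$ ($\mu_L\not\equiv0$ because $\beta>0$), the moment–cumulant expansion of $\E(X^*(t))^{2k}$, a sum over partitions into blocks of size $\ge2$ with a partition into $r$ blocks contributing order $t^{2k-r\alpha}$, is dominated by its unique single-block term; hence $\tau_{X^*}(2k)=2k-\alpha$ for every integer $k\ge1$.

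Next I would treat $0<q<\beta$ using the limit theorem. By Theorem \ref{thm:Zcase}, $A_T^{-1}X^*(T)\overset{d}{\to}Z_{\alpha,\beta}(1)$ with $A_T=T^{1-\alpha/\beta}\ell(T)^{1/\beta}$, and lower semicontinuity of moments under weak convergence gives $\liminf_T \E|A_T^{-1}X^*(T)|^q\ge \E|Z_{\alpha,\beta}(1)|^q>0$, whence $\liminf_T g_T(q)\ge (1-\alpha/\beta)q$. The matching upper bound is the main obstacle: it needs the uniform moment bound $\sup_T\E|A_T^{-1}X^*(T)|^q<\infty$ for $q<\beta$, i.e.\ moment control below the stable index. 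I would get it from the fractional-moment representation $\E|Y|^q=C_q\int_0^\infty \zeta^{-1-q}\bigl(1-\Re\,\E e^{i\zeta Y}\bigr)\,d\zeta$, valid for $0<q<2$ since $\E X^*(t)=0$: here $1-\Re\,\E e^{i\zeta A_T^{-1}X^*(T)}=1-\Re\,e^{\kappa_{X^*}(\zeta/A_T,\,T)}$, and $\kappa_{X^*}(\zeta/A_T,T)$ converges (from the proof of Theorem \ref{thm:Zcase}) to $-|\zeta|^\beta\omega(\zeta;\beta,c^+,c^-)$, whose real part is $-c|\zeta|^\beta$. As $1-e^{-c|\zeta|^\beta}\asymp\min(|\zeta|^\beta,1)$ is integrable against $\zeta^{-1-q}$ exactly when $0<q<\beta$, a uniform-in-$T$ integrable envelope for the pre-limit integrand yields the bound. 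This gives $\tau_{X^*}(q)=(1-\alpha/\beta)q$ on $(0,\beta)$, and, by continuity of the convex limit, the crossing value $\tau_{X^*}(\beta)=\beta-\alpha$.

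Finally I would fill in $q\ge\beta$ by convexity using the anchor points already established. For the upper bound, the chord through $(\beta,\beta-\alpha)$ and $(2,2-\alpha)$ lies on the line $q-\alpha$, so convexity of each $g_t$ together with $g_t(\beta)\to\beta-\alpha$ and $g_t(2)\to2-\alpha$ gives $\limsup_T g_T(q)\le q-\alpha$ on $[\beta,2]$; the chord through $(2k,2k-\alpha)$ and $(2k+2,2k+2-\alpha)$ extends this to all $q\ge2$. For the lower bound, writing $2$ (resp.\ $2k+2$) as a convex combination of $q$ and $4$ (resp.\ $2k+4$) and rearranging $g_t(2)\le\theta g_t(q)+(1-\theta)g_t(4)$ yields, upon taking $\liminf_T$ and inserting the known even-integer limits, exactly $\liminf_T g_T(q)\ge q-\alpha$. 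Combining the bounds gives $\tau_{X^*}(q)=q-\alpha$ for all $q\ge\beta$, completing \eqref{e:Ztau}; for $q\ge2$ one may alternatively quote \eqref{tausupOUqqstar}, as here $q^*=2$, so the genuinely new range is $[\beta,2)$. I expect the uniform integrability of the second step to be the sole delicate point, the other steps being a Karamata computation and elementary convexity bookkeeping.
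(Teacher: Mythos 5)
Your proposal is correct, and its analytic core coincides with the paper's: for $q<\beta$ the paper likewise proves uniform integrability of $\{|A_T^{-1}X^*(Tt)|^q\}$ via the fractional-moment formula of Lemma \ref{lemma:bound} combined with the uniform cumulant bound $|\kappa_{X^*}(A_T^{-1}\zeta,Tt)|\le C\,t^{\beta-\alpha-\delta}|\zeta|^{\beta}$ of \eqref{e:Zcaseweak-bound}, itself obtained from $|\kappa_L(\zeta)|\le C|\zeta|^{\beta}$ in \eqref{PPSprop2} plus Potter's bounds; this dominates the integrand by the $q$-th moment of a symmetric $\beta$-stable law, which is exactly the ``uniform integrable envelope'' you flag as the sole delicate point, and it is already available in Section \ref{s:proof-weak}. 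One small technical divergence there: the paper first symmetrizes, using $1-|e^{\kappa}|^2\le 1-e^{-2|\kappa|}$ and $\E|Y|^q\le\E|\widetilde{Y}|^q$ for $q\ge 1$ (smaller $q$ following from uniform integrability at larger $q$), whereas you apply the formula to $1-\Re\, e^{\kappa}$ directly; that also works, since $\Re\kappa\le 0$ yields $1-\Re\, e^{\kappa}\le\min\{2,\,e|\kappa|\}$, but this inequality should be recorded. Where you genuinely depart from the paper is on $q\ge\beta$: the paper imports $\tau_{X^*}(q)=q-\alpha$ for $q\ge q^*=2$ wholesale from \cite[Theorem 7]{GLST2017Arxiv} (quoted as \eqref{tausupOUqqstar}), gets $\tau_{X^*}(q)=(1-\alpha/\beta)q$ below $\beta$ from \cite[Theorem 1]{GLST2017Arxiv}, and fills $[\beta,2]$ by convexity of the scaling function \cite[Proposition 2.1]{GLST2016JSP} together with the three-collinear-points lemma \cite[Lemma 3]{GLST2017Arxiv}; you instead rederive the even-integer anchors $\tau_{X^*}(2k)=2k-\alpha$ directly, via $\kappa^{(m)}_{X^*}(t)=\kappa_{L,m}A_m(t)$ with $A_m(t)$ of order $t^{m-\alpha}$ by Karamata and dominance of the single-block term in the moment--cumulant expansion — essentially the computation the paper carries out only in the proof of Theorem \ref{thm:momBMcase} — and you replace the cited convexity lemmas by explicit chord inequalities on $g_t$, including the correct rearranged bound $g_t(2k+2)\le\theta g_t(q)+(1-\theta)g_t(2k+4)$ for the lower bound, which indeed simplifies to $q-\alpha$. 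The trade-off is clear: the paper's proof of this theorem is two lines resting on three external results, while yours is self-contained and makes transparent why the even anchors plus the limit theorem below $\beta$ determine everything; the price is redoing the Karamata asymptotics, where you should also note that $m\ge 2>1+\alpha$ (since $\alpha<1$) puts every cumulant of order $\ge 2$ in the $t^{m-\alpha}$ regime, and that $\kappa_{L,2k}>0$ because $c^++c^->0$ in \eqref{LevyMCond} forces $\mu_L\not\equiv 0$.
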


We now turn to the short range dependent setting of Theorem \ref{thm:BMcase}. In this case the integrated supOU process need not be intermittent. For example, if $\pi$ is a measure with finite support, then the supOU process corresponds to a finite superposition of OU type processes which satisfies a strong mixing property. The limit is Brownian motion which is $H$-self-similar with $H=1/2$. From the results of \cite{yokoyama1980moment}, one may show uniform integrability which together with Theorem \ref{thm:BMcase} implies that $\tau_{X^*}(q)=Hq=q/2$ for every $q>0$ (see also \cite[Example 8]{GLST2017Arxiv}). However, when $\pi$ is regularly varying at zero, intermittency is present. The following theorem gives the form of the scaling function showing that the change-point between two linear parts is $2\alpha$.

\begin{theorem}\label{thm:momBMcase}
Suppose that $\pi$ satisfies \eqref{regvarofpi} with integer $\alpha>1$ and some slowly varying function $\ell$. If $\mu_L \not\equiv 0$, then
\begin{equation}\label{e:BMtau}
\tau_{X^*}(q) = \begin{cases}
\frac{1}{2} q, & 0<q\leq 2 \alpha,\\
q-\alpha, & q\geq 2 \alpha.
\end{cases}
\end{equation}
If $\mu_L \equiv 0$, then $X^*$ is Gaussian and 
\begin{equation*}
\tau_{X^*}(q) = \frac{1}{2} q, \quad \forall q>0.
\end{equation*}
\end{theorem}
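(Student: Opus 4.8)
The plan is to read off the scaling function from the growth rates of the cumulants of $X^*(t)$, convert these into the growth of even integer moments, and then interpolate to all $q>0$ using convexity together with the integrality of $\alpha$.

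\textbf{Reduction to cumulants.} Integrating the kernel of \eqref{supOU} I would write $X^*(t)=\int_{\R_+}\int_\R g_t(\xi,s)\,\Lambda(d\xi,ds)$ with $g_t(\xi,s)=\int_0^t e^{s-\xi u}\1_{\{s\le \xi u\}}\,du$, so that $\kappa_{X^*(t)}(\zeta)=\int_{\R_+}\int_\R \kappa_L(\zeta g_t(\xi,s))\,ds\,\pi(d\xi)$. Under the standing analyticity assumption $\kappa_L(\zeta)=\sum_{n\ge2}\frac{\kappa_n^L}{n!}(i\zeta)^n$ with $\kappa_1^L=0$ by \eqref{kappacumfun1}, and differentiating under the integral gives $\kappa_n(X^*(t))=\kappa_n^L\,I_n(t)$ with $I_n(t)=\int_{\R_+}\int_\R g_t(\xi,s)^n\,ds\,\pi(d\xi)$. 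Carrying out the $s$-integral reduces this to $I_n(t)=\int_0^\infty \xi^{-n}H_n(\xi t)\,\pi(d\xi)$, where $H_n(u)=\frac1n(1-e^{-u})^n+\int_0^u(1-e^{-w})^n\,dw$ satisfies $H_n(u)\sim u$ as $u\to\infty$ and $H_n(u)\sim u^n/n$ as $u\to0$. The Gaussian case $\mu_L\equiv0$ is then immediate: $X^*(t)$ is centered Gaussian with $\Var X^*(t)=\kappa_2^L I_2(t)\sim c\,t$, whence $\E|X^*(t)|^q=c_q(\Var X^*(t))^{q/2}\sim c_q' t^{q/2}$ and $\tau_{X^*}(q)=q/2$ for all $q>0$.

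\textbf{Growth of $I_n(t)$ (the main obstacle).} I expect the technical heart to be the estimate $I_n(t)\asymp t^{\max(1,\,n-\alpha)}$ up to a slowly varying factor. I would split the integral at $\xi\approx 1/t$. For $\xi t\gg1$ one has $\xi^{-n}H_n(\xi t)\sim t\,\xi^{-(n-1)}$, contributing $\sim t\int_{1/t}^\infty\xi^{-(n-1)}\pi(d\xi)$, which by \eqref{regvarofpi} and Karamata's theorem \cite[Theorem 1.5.11]{bingham1989regular} tends to a constant when $n-1<\alpha$ and grows like $\ell(t)t^{\,n-1-\alpha}$ when $n-1>\alpha$; for $\xi t\ll1$ one has $\xi^{-n}H_n(\xi t)\sim t^n/n$, contributing $\sim n^{-1}t^n\pi((0,1/t])\sim n^{-1}\ell(t)t^{\,n-\alpha}$. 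Comparing the two yields exponent $1$ for $n\le\alpha$ and exponent $n-\alpha$ for $n\ge\alpha+1$, i.e. $\max(1,n-\alpha)$, the boundary index $n=\alpha+1$ producing only an extra slowly varying factor. The delicate points are the uniformity in the Karamata estimates and the boundary case, but since the outcome only enters through the ratio $\log I_n(t)/\log t$, the slowly varying corrections are harmless.

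\textbf{Even integer moments.} Since $\kappa_1^L=0$, the moment–cumulant relation gives $\E X^*(t)^{2m}=\sum_{\pi}\prod_{B\in\pi}\kappa_{|B|}(X^*(t))$, summed over partitions $\pi$ of $\{1,\dots,2m\}$ into blocks of size $\ge2$. The hypothesis $\mu_L\not\equiv0$ is used here: by \eqref{kappacumfun1} every even cumulant is strictly positive, $\kappa_{2m}^L=\int_\R x^{2m}\mu_L(dx)>0$ and $\kappa_2^L=b+\int x^2\mu_L(dx)>0$, so the term of largest order cannot be cancelled. By the previous step each term has order $t^{\sum_{B}\max(1,|B|-\alpha)}$ (times a slowly varying factor), so the exponent of $\E X^*(t)^{2m}$ is $\max_\pi\sum_{B}\max(1,|B|-\alpha)$; a short optimization shows the maximum is attained by the all-pairs partition (value $m$) or by the single block (value $2m-\alpha$). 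Hence $\tau_{X^*}(2m)=\max(m,\,2m-\alpha)$ for every integer $m\ge1$.

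\textbf{Interpolation to all $q$.} For $t>1$ the map $q\mapsto\log\E|X^*(t)|^q$ is convex by Lyapunov's inequality, so $\tau_{X^*}$ is convex as a pointwise limit. The computed values satisfy $\tau_{X^*}(2m)=f(2m)$ for $f(q)=\max(q/2,q-\alpha)$, and this is where $\alpha$ being an integer is decisive: the single kink of the convex piecewise-linear $f$ sits at the \emph{even} integer $q=2\alpha$, so $f$ is affine on every interval between consecutive even integers. Convexity then forces equality. On $[0,2\alpha]$ the points $(0,0),(2,1),(2\alpha,\alpha)$ are collinear on the line $q/2$, and since $\tau_{X^*}$ lies below the chord yet meets it at the interior point $q=2$ (using $\alpha\ge2$), it must equal $q/2$ on the whole interval; likewise on $[2\alpha,\infty)$ the values lie on the line $q-\alpha$ and $\tau_{X^*}$ touches that chord at the interior even integer $q=2\alpha+2$, forcing $\tau_{X^*}(q)=q-\alpha$. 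This establishes \eqref{e:BMtau} and completes the proof.
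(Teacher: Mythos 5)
Your proposal is correct, and its skeleton is the same as the paper's: cumulant growth rates of order $t^{\max(1,\,m-\alpha)}$ up to slowly varying factors, the moment--cumulant (partition/Bell polynomial) expansion in which the all-pairs partition (exponent $m$) competes with the single block (exponent $2m-\alpha$), positivity of the even cumulants from $\mu_L\not\equiv 0$ to rule out cancellation of the leading term, and finally convexity plus collinearity of the values at even integers, with the integrality of $\alpha$ placing the kink at the even point $2\alpha$. The genuine difference lies in what you prove versus what the paper cites: the paper imports the cumulant formula \eqref{e:Im-1} from \cite[Theorem 4.2]{bn2001}, the growth $\kappa_{X^*}^{(m)}(Tt)\sim \ell_m(Tt)(Tt)^{m-\alpha}$ for $m>\alpha+1$ from \cite[Lemma 2]{GLST2017Arxiv}, the nonvanishing of even cumulants from \cite[Remark 3.4.]{gupta2009cumulants}, and the entire regime $q\geq q^*$ from \cite[Theorem 7]{GLST2017Arxiv} (relation \eqref{tausupOUqqstar}), and it routes the regime $q\leq 2\alpha-2$ through convergence of normalized moments to Brownian ones plus \cite[Theorem 1]{GLST2017Arxiv}. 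You instead inline all of this: your direct kernel computation $I_n(t)=\int_0^\infty \xi^{-n}H_n(\xi t)\,\pi(d\xi)$ with the Karamata split at $\xi\approx 1/t$ re-proves the cited cumulant lemma (the paper's boundary case $m=\alpha+1$, where it only shows $|\kappa_{X^*}^{(\alpha+1)}(T)|\leq CT^{1+\varepsilon}$ when $\int_0^\infty \xi^{-\alpha}\pi(d\xi)=\infty$, is the careful version of your ``extra slowly varying factor,'' and both suffice since only $\log$-ratios matter), your identity $\kappa_{2m}^L=\int_{\R}x^{2m}\mu_L(dx)>0$ replaces the citation to \cite{gupta2009cumulants}, and your optimization $\max_\pi \sum_B \max(1,|B|-\alpha)=\max(m,2m-\alpha)$ yields $\tau_{X^*}(2m)=2m-\alpha$ at every even integer above $2\alpha$ in one stroke, so you need neither the external intermittency theorem nor the convergence-of-moments detour; this buys a self-contained and arguably more transparent proof. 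The one nicety you elide is that ``convex as a pointwise limit'' presupposes the limit \eqref{deftau} exists at non-even $q$ (the paper leans on \cite[Proposition 2.1]{GLST2016JSP} and \cite[Theorem 1]{GLST2017Arxiv} here); in your setup this is a two-line repair: for $q$ between consecutive even integers, H\"older interpolation of $\E|X^*(t)|^q$ against the two neighboring even moments bounds the $\limsup$ by the chord, while interpolating the next even moment against $\E|X^*(t)|^q$ and the even moment beyond it gives the matching $\liminf$, since your computed even values are collinear on each branch; after that your collinearity argument goes through verbatim.
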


Figure \ref{fig2} provides the plots of the scaling functions obtained in this section.

\begin{figure}
\centering
\begin{subfigure}[b]{0.45\textwidth}
\resizebox{1.2\textwidth}{!}{
\begin{tikzpicture}
\begin{axis}[
axis lines=middle,
xlabel=$q$, xlabel style={at=(current axis.right of origin), anchor=west},
ylabel=$\tau_{X^*}(q)$, ylabel style={at=(current axis.above origin), anchor=south},
xtick={0,2},
xticklabels={$0$,$2$},
domain=0:5,
ymajorticks=false
]
\addplot[thick,domain=0:2]{(1-0.25)*x} node [pos=0.75,left]{$\left(1-\frac{\alpha}{2}\right)q\ $};
\addplot[thick,domain=2:5]{x-0.5} node [pos=0.5,left]{$q-\alpha$};
\addplot[dashed] coordinates {(2,0) (2,1.5)};
\addplot[dashed] coordinates {(0,1.5) (2,1.5)};
\end{axis}
\end{tikzpicture}
}
\caption{Theorem \ref{thm:momFBMcase} (non-Gaussian case)}
\label{fig2a}
\end{subfigure}
\hfill
\begin{subfigure}[b]{0.45\textwidth}
\resizebox{1.2\textwidth}{!}{
\begin{tikzpicture}
\begin{axis}[
axis lines=middle,
xlabel=$q$, xlabel style={at=(current axis.right of origin), anchor=west},
ylabel=$\tau_{X^*}(q)$, ylabel style={at=(current axis.above origin), anchor=south},
xtick={0,1.5},
xticklabels={$0$,$1+\alpha$},
domain=0:5,
ymajorticks=false
]
\addplot[thick,domain=0:1.5]{(1/1.5)*x} node [pos=0.75,left]{$\frac{1}{1+\alpha}q\ $};
\addplot[thick,domain=1.5:5]{x-0.5} node [pos=0.5,left]{$q-\alpha$};
\addplot[dashed] coordinates {(1.5,0) (1.5,1)};
\addplot[dashed] coordinates {(0,1) (1.5,1)};
\end{axis}
\end{tikzpicture}
}
\caption{Theorem \ref{thm:momSLPcase}}
\label{fig2c}
\end{subfigure}
\begin{subfigure}[b]{0.45\textwidth}
\resizebox{1.2\textwidth}{!}{
\begin{tikzpicture}
\begin{axis}[
axis lines=middle,
xlabel=$q$, xlabel style={at=(current axis.right of origin), anchor=west},
ylabel=$\tau_{X^*}(q)$, ylabel style={at=(current axis.above origin), anchor=south},
xtick={0,1.7},
xticklabels={$0$,$\beta$},
domain=0:5,
ymajorticks=false
]
\addplot[thick,domain=0:1.7]{(1-0.5/1.7)*x} node [pos=0.83,left]{$\left(1-\frac{\alpha}{\beta}\right)q\, $};
\addplot[thick,domain=1.7:5]{x-0.5} node [pos=0.5,left]{$q-\alpha$};
\addplot[dashed] coordinates {(1.7,0) (1.7,1.2)};
\addplot[dashed] coordinates {(0,1.2) (1.7,1.2)};
\end{axis}
\end{tikzpicture}
}
\caption{Theorem \ref{thm:momZcase}}
\label{fig2b}
\end{subfigure}
\hfill
\begin{subfigure}[b]{0.45\textwidth}
\resizebox{1.2\textwidth}{!}{
\begin{tikzpicture}
\begin{axis}[
axis lines=middle,
xlabel=$q$, xlabel style={at=(current axis.right of origin), anchor=west},
ylabel=$\tau_{X^*}(q)$, ylabel style={at=(current axis.above origin), anchor=south},
xtick={0,3},
xticklabels={$0$,$2\alpha$},
domain=0:5,
ymajorticks=false
]
\addplot[thick,domain=0:3]{0.5*x} node [pos=0.5,above]{$\frac{1}{2}q$};
\addplot[thick,domain=3:5]{x-1.5} node [pos=0.5,left]{$q-\alpha$};
\addplot[dashed] coordinates {(3,0) (3,1.5)};
\addplot[dashed] coordinates {(0,1.5) (3,1.5)};
\end{axis}
\end{tikzpicture}
}
\caption{Theorem \ref{thm:momBMcase} (non-Gaussian case)}
\label{fig2d}
\end{subfigure}
\caption{Scaling functions obtained in Theorems \ref{thm:momFBMcase}-\ref{thm:momBMcase}. If $X^*$ is purely Gaussian, then the limit process is Gaussian and the scaling function is then a straight line $\tau_{X^*}(q)=Hq$, $q>0$, where $H\in (1/2,1)$ in the case of Theorem \ref{thm:momFBMcase} and $H=1/2$ in the case of Theorem \ref{thm:momBMcase}.}\label{fig2}
\end{figure}
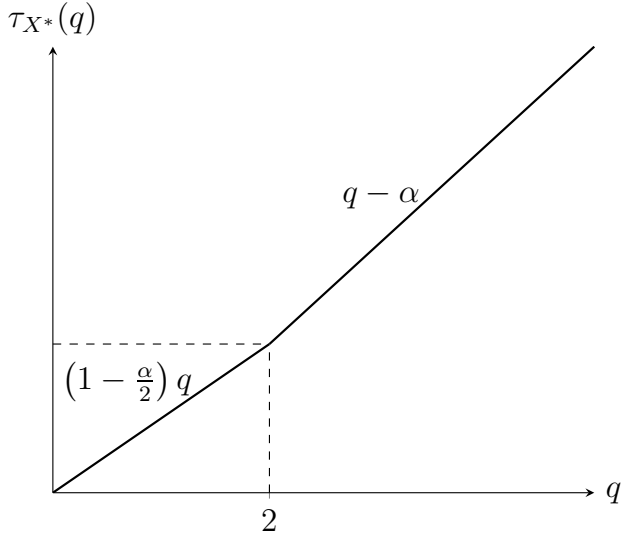
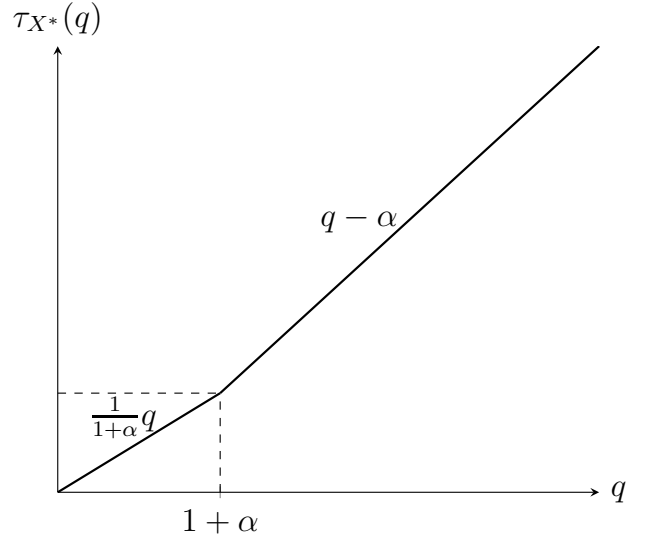
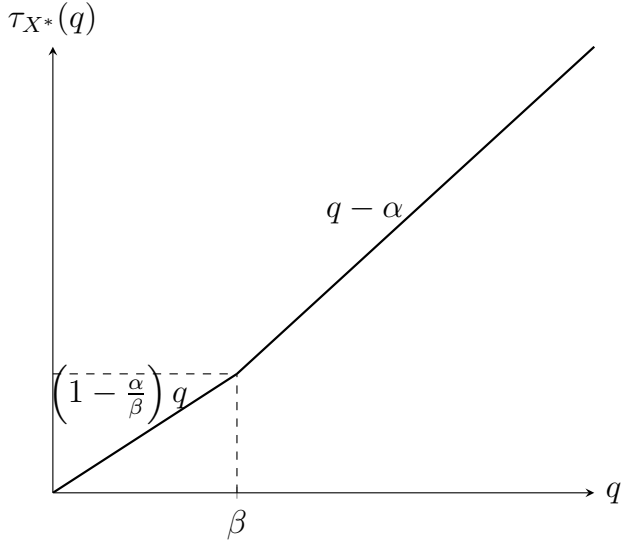
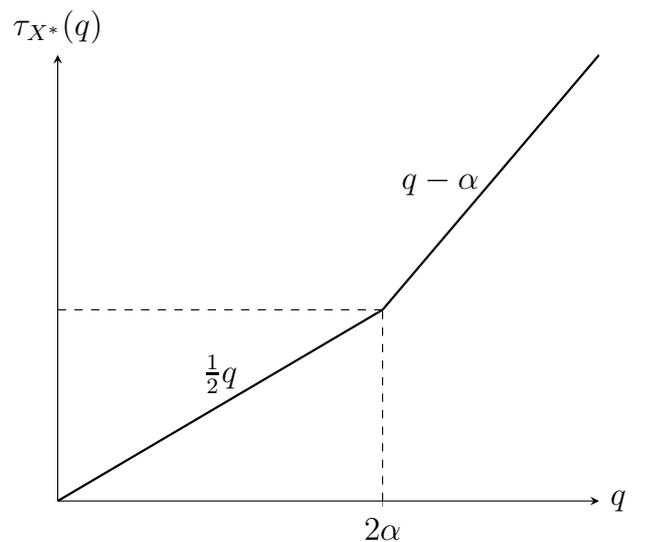

Theorem \ref{thm:momBMcase} assumes $\alpha$ in \eqref{regvarofpi} is an integer.  We conjecture,
however, the following:

\begin{conjecture}
Theorem \ref{thm:momBMcase} holds for any real $\alpha > 1$.
\end{conjecture}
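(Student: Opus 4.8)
The plan is to establish the real-$\alpha$ statement directly from the infinite divisibility of $X^*(t)$, bypassing the even-integer/convexity scheme that forces $\alpha$ to be an integer. Write $X^*(t)=\int_{\R_+}\int_{\R} h_t(\xi,s)\,\Lambda(d\xi,ds)$ with $h_t(\xi,s)=\int_0^t e^{-\xi u+s}\1_{[0,\infty)}(\xi u-s)\,du\ge 0$, the time-integral of the kernel in \eqref{supOU}. Then $X^*(t)$ is infinitely divisible with zero mean, variance $V_t:=\Var X^*(t)\sim \widetilde{\sigma}^2\,t$ (this is the content of Theorem~\ref{thm:BMcase} and \eqref{sigmatilBM}, available here because $\alpha>1$ makes the correlation integrable), and L\'evy measure $\nu_t$ equal to the image of $\mu_L\otimes\pi\otimes \mathrm{Leb}$ under $(y,\xi,s)\mapsto h_t(\xi,s)\,y$. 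The key idea is that the entire change-point structure can be read off a two-sided moment bound for \emph{centered infinitely divisible} laws: for every real $q\ge 2$,
\begin{equation*}
\E|X^*(t)|^q \;\asymp_q\; V_t^{\,q/2}+\int_{\R}|x|^q\,\nu_t(dx),
\end{equation*}
with constants depending on $q$ but not on $t$. The upper bound is a Rosenthal-type inequality obtained by approximating $X^*(t)$ by a Gaussian plus a compound Poisson variable and applying the classical Rosenthal inequality to the independent summands; the lower bound is obtained by symmetrization together with a single-big-jump estimate, restricting the expectation to the event that exactly one jump of the L\'evy basis is macroscopic and dominates the remainder.

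Granting this bound, the rest is a regular-variation exercise. Since $X$, hence $L(1)$, has exponentially decaying tails, $M_q:=\int_{\R}|y|^q\mu_L(dy)<\infty$ for all $q$, and by the product structure of $\nu_t$ we have $\int_{\R}|x|^q\,\nu_t(dx)=M_q\,I_q(t)$ where $I_q(t):=\int_{\R_+}\int_{\R} h_t(\xi,s)^q\,\pi(d\xi)\,ds$. Evaluating the inner $s$-integral explicitly and integrating in $\xi$ against the regularly varying density \eqref{regvarofp}, a Karamata/Tauberian argument gives $I_q(t)\sim c_q\,t$ for $2\le q<1+\alpha$ and $I_q(t)\sim \widetilde c_q\,t^{q-\alpha}\ell(t)$ for $q>1+\alpha$, the exponent being continuous at the crossover $q=1+\alpha$. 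Hence, for $q\ge 2$,
\begin{equation*}
\E|X^*(t)|^q \;\asymp\; t^{q/2}+I_q(t),
\end{equation*}
and since $\alpha>1$ forces $1+\alpha<2\alpha$, the term $t^{q/2}$ dominates for $2\le q\le 2\alpha$ while $t^{q-\alpha}$ dominates for $q\ge 2\alpha$; taking logarithms and dividing by $\log t$ yields \eqref{e:BMtau} on $[2,\infty)$. The range $0<q<2$ is handled separately: Jensen's inequality gives $\E|X^*(t)|^q\le V_t^{q/2}\sim(\widetilde{\sigma}^2 t)^{q/2}$, while Theorem~\ref{thm:BMcase} and Fatou's lemma give $\liminf_{T}\E|X^*(T)/T^{1/2}|^q\ge \E|\widetilde{\sigma} B(1)|^q>0$, so $\tau_{X^*}(q)=q/2$ there; since $\alpha>1$ this covers all of $(0,2]\subset(0,2\alpha]$. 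Finally, when $\mu_L\equiv 0$ we have $\nu_t\equiv 0$, so $X^*(t)$ is Gaussian and $\E|X^*(t)|^q=C_q V_t^{q/2}$ gives $\tau_{X^*}(q)=q/2$ for all $q>0$.

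The main obstacle is establishing the two-sided moment equivalence uniformly in $t$, and in particular its lower half. The Gaussian-scale lower bound $\E|X^*(t)|^q\gtrsim t^{q/2}$ is easy (Fatou, as above), but the heavy lower bound $\E|X^*(t)|^q\gtrsim\int|x|^q\,\nu_t(dx)\sim t^{q-\alpha}\ell(t)$ on the right branch is delicate for non-integer $q$: it is precisely the range $q\in(2\alpha,q^*)$, with $q^*$ the smallest even integer exceeding $2\alpha$, that lies beyond the reach of \cite[Theorem~7]{GLST2017Arxiv} and cannot be recovered by interpolation, because the convex envelope of the even-integer values $\tau_{X^*}(2n)=\max(n,2n-\alpha)$ overshoots $\max(q/2,q-\alpha)$ whenever $\alpha$ is not an integer. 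Making the single-big-jump lower bound rigorous — controlling the Gaussian part, the compensating drift, and the remaining small jumps on the relevant rare event, and verifying that these estimates hold with $t$-uniform constants — is therefore where the real work lies. Once it is in place, non-integer $q$ and non-integer $\alpha$ are treated on exactly the same footing as the integer case.
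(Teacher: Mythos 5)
First, note what you are up against: the paper does \emph{not} prove this statement. It is explicitly left open; the proof of Theorem \ref{thm:momBMcase} only yields \eqref{e:conj}, i.e.\ the two branches $\tau_{X^*}(q)=q/2$ for $q\leq q_*$ and $\tau_{X^*}(q)=q-\alpha$ for $q\geq q^*$, and the window $(q_*,q^*)$ around $2\alpha$ is closed by convexity and collinearity only when $\alpha$ is an integer. So your proposal is not an alternative to a proof in the paper — it is an attempt to supply what the paper does not have, and its route is genuinely different: instead of even-integer cumulants, Bell polynomials and convex interpolation, you exploit the infinite divisibility of $X^*(t)$ itself, reducing everything to a Rosenthal-type moment equivalence $\E|X^*(t)|^q\asymp_q V_t^{q/2}+\int|x|^q\,\nu_t(dx)$ for real $q\geq 2$, plus a Karamata computation for $I_q(t)$. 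Your supporting computations check out: $\nu_t$ is indeed the stated pushforward (Rajput--Rosi\'nski), the factorization $\int|x|^q\nu_t(dx)=M_qI_q(t)$ is correct, the inner $s$-integral of $h_t^q$ evaluates to $\xi^{-q}\bigl(q^{-1}(1-e^{-\xi t})^q+\int_0^{\xi t}(1-e^{-v})^q\,dv\bigr)$, and splitting at $\xi\approx 1/t$ with \eqref{regvarofpi} gives exactly your asymptotics $I_q(t)\asymp t$ for $q<1+\alpha$ and $I_q(t)\asymp t^{q-\alpha}\ell(t)$ for $q>1+\alpha$; since $\alpha>1$ implies $1+\alpha<2\alpha$, the crossover $q/2$ versus $q-\alpha$ lands at $2\alpha$, which is \eqref{e:BMtau}. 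The small-$q$ range via Jensen and Fatou, and the Gaussian case, are fine.

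Two points of criticism, one reassuring and one substantive. First, the lemma you only ``grant'' is in fact a known two-sided Rosenthal inequality for centered infinitely divisible laws: the upper bound by compound-Poisson approximation and the classical Rosenthal inequality (constants depend only on $q$, hence are automatically $t$-uniform), and the lower bound by $\E|X|^q\geq V^{q/2}$ (Jensen, $q\geq2$) together with symmetrization and a one-big-jump estimate above the level $u_0$ at which the symmetrized L\'evy tail mass equals one — below $u_0$ the contribution $\int_{|x|\leq u_0}|x|^q\,\nu_t(dx)\leq u_0^{q-2}\int x^2\,\nu_t(dx)\leq V_t^{q/2}$ is absorbed by the Gaussian-scale term. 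So the hole can be filled, but as written your proposal defers its central lemma to a sketch. Second, and more instructively, your diagnosis that the single-big-jump \emph{lower} bound is ``where the real work lies'' is wrong: it is not needed at all. On the open window $(q_*,q^*)$, the lower bound $\tau_{X^*}(q)\geq\max(q/2,\,q-\alpha)$ already follows from the two branches the paper has established combined with convexity of the scaling function \cite[Proposition 2.1]{GLST2016JSP}: for $q_*<q<q^*<q_2$, convexity applied to the points $0$, $q_*$, $q$ gives $\tau_{X^*}(q)\geq q/2$, and applied to $q$, $q^*$, $q_2$ (where $\tau_{X^*}=q-\alpha$ by \cite[Theorem 7]{GLST2017Arxiv}) gives $\tau_{X^*}(q)\geq q-\alpha$; what convexity cannot deliver is the \emph{upper} bound, since interpolation only yields the chord from $(q_*,q_*/2)$ to $(q^*,q^*-\alpha)$, which overshoots. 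Hence the conjecture reduces precisely to the easy half of your inequality — the Rosenthal upper bound together with your $I_q(t)$ asymptotics — and with that observation your proposal closes the conjecture with strictly less machinery than you feared.
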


In fact, a closer look at the proof of Theorem \ref{thm:momBMcase} reveals that we actually have
\begin{equation}  \label{e:conj}
\tau_{X^*}(q)=\begin{cases}
\frac{1}{2} q, & 0<q\leq q_*,\\
q-\alpha, & q\geq q^*.
\end{cases}
\end{equation}
where $q_*$ is the largest even integer less than or equal to $2\alpha$ and $q^*$ is the smallest even integer greater than $2\alpha$, as in \eqref{tausupOUqqstar} (see Figure \ref{figconjecture}). So, if $\alpha >1$ is integer, then $q_*=2\alpha$ and $\tau_{X^*}$ is a convex function \cite[Proposition 2.1]{GLST2016JSP} passing through three collinear points $(2\alpha,\alpha)$, $(q^*, q^*-\alpha)$ and, say, $(q^*+1, q^*+1-\alpha)$. Hence, $\tau_{X^*}$ must be linear on $[2\alpha,q^*]$ \cite[Lemma 3]{GLST2017Arxiv} and Theorem \ref{thm:momBMcase} follows. Relation \eqref{e:conj} shows that even if $\alpha>1$ is not an integer, $\tau_{X^*}$ is not a linear function.

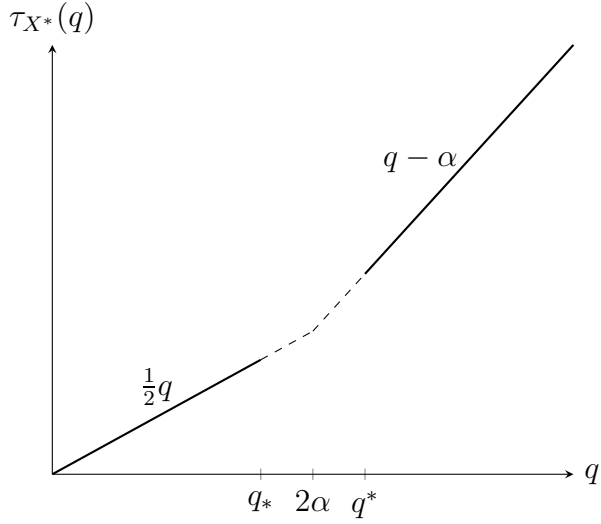
\begin{figure}[h!]
\centering
\begin{center}
	\begin{tikzpicture}
	\begin{axis}[
	axis lines=middle,
	xlabel=$q$, xlabel style={at=(current axis.right of origin), anchor=west},
	ylabel=$\tau_{X^*}(q)$, ylabel style={at=(current axis.above origin), anchor=south},
	xtick={0,4,5,6},
	xticklabels={$0$,$q_*$,$2\alpha$,$q^*$},
	domain=0:10,
	ymajorticks=false
	]
	\addplot[thick,domain=0:4]{0.5*x} node [pos=0.5,above]{$\frac{1}{2}q$};
	\addplot[thick,domain=6:10]{x-2.5} node [pos=0.5,left]{$q-\alpha$};
	\addplot[dashed,domain=4:5]{0.5*x};
	\addplot[dashed,domain=5:6]{x-2.5};
	\end{axis}
	\end{tikzpicture}
\end{center}	
\caption{Theorem \ref{thm:momBMcase} for non-integer $\alpha$. The solid part of the graph is given in \eqref{e:conj} and follows from the proof of Theorem \ref{thm:momBMcase}. The dashed part is a conjecture.}
\label{figconjecture}
\end{figure}

\section{Proofs related to convergence of finite dimensional distributions}\label{sec4}

Consider the supOU process $\{X(t), \, t \geq 0\}$ in \eqref{supOU} and the integrated process $\{X^*(t), \, t \geq 0\}$. The following lemma provides the joint cumulant function using the probability measure $\pi$, and either the cumulant function $\kappa_{L}$ in \eqref{kappacumfun} or the cumulant function $\kappa_X(\zeta)=\log \E e^{i\zeta X(1)}$ of $\{X(t), \, t \geq 0\}$.

\begin{lemma}
For $\zeta_1,\dots,\zeta_m \in \R$ and $t_1<\cdots<t_m$, the cumulant function of finite dimensional distributions of the normalized integrated process $X^*$ may be expressed as
\begin{align}
C & \left\{ \zeta_1, \dots, \zeta_m \ddagger \left(A_T^{-1} X^*( T t_1),\dots , A_T^{-1} X^*( T t _m) \right) \right\} \nonumber\\
\begin{split}
& = \int_0^\infty \int_{-\infty}^0 \kappa_L \left( A_T^{-1} \sum_{j=1}^m \zeta_j \xi^{-1} \left(e^{\xi s} - e^{-\xi (T t_j - s)} \right)  \right) ds \xi \pi(d\xi) \label{e:cumfidisnormLform1}\\
&\qquad  + \int_0^\infty \int_{0}^\infty \kappa_L \left( A_T^{-1} \sum_{j=1}^m \zeta_j \1_{[0,T t_j]}(s)  \xi^{-1} \left(1 - e^{-\xi (T t_j - s)} \right)  \right) ds \xi \pi(d\xi)\\
\end{split}\\
& = A_T^{-1} \sum_{i=1}^m \zeta_i \int_0^\infty \int_0^{T t_i} \kappa_X' \left( A_T^{-1} \sum_{j=1}^m \zeta_j \1_{[0,Tt_j]}(s) \xi^{-1} \left(1 - e^{-\xi (T t_j - s)} \right) \right) ds \pi(d\xi) \label{e:cumfidisnorm1}.
\end{align}
\end{lemma}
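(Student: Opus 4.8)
The plan is to represent $X^*$, and more generally every linear combination $\sum_{j=1}^m\zeta_j X^*(Tt_j)$, as a single stochastic integral against the L\'evy basis $\Lambda$ and then read off its cumulant from \eqref{cumofLam}. First I would apply a stochastic Fubini theorem (valid in the integration framework of \cite{rajput1989spectral}) to interchange the time integral in \eqref{integratedsupOU} with the $\Lambda$-integral in \eqref{supOU}, writing
\[
X^*(t)=\int_{\R_+}\!\int_\R g_t(\xi,s)\,\Lambda(d\xi,ds),\qquad g_t(\xi,s)=\int_0^t e^{-\xi u+s}\1_{[0,\infty)}(\xi u-s)\,du.
\]
Because $\sum_j\zeta_j X^*(Tt_j)=\int\!\int\big(\sum_j\zeta_j g_{Tt_j}\big)\,d\Lambda$, the defining relation \eqref{cumofLam} (control measure $\pi\times Leb$, seed cumulant $\kappa_L$) immediately yields the master formula
\[
C\{\cdots\}=\int_{\R_+}\!\int_\R \kappa_L\!\Big(A_T^{-1}\sum_j\zeta_j\, g_{Tt_j}(\xi,s)\Big)ds\,\pi(d\xi),
\]
and the rest of the proof is to evaluate $g_t$ and recast this into \eqref{e:cumfidisnormLform1} and \eqref{e:cumfidisnorm1}.

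Next I would compute $g_t$ explicitly. The indicator forces $u\ge s/\xi$: for $s\le 0$ the whole interval $[0,t]$ contributes, giving $g_t(\xi,s)=\xi^{-1}e^{s}(1-e^{-\xi t})$, while for $s>0$ one integrates over $[s/\xi,t]$, giving $g_t(\xi,s)=\xi^{-1}(1-e^{s-\xi t})\1_{\{0<s<\xi t\}}$. The change of variables $s\mapsto\xi s$ (Jacobian $\xi$, turning $\pi(d\xi)$ into $\xi\,\pi(d\xi)$) normalizes these to $\xi^{-1}(e^{\xi s}-e^{-\xi(t-s)})$ on $\{s<0\}$ and $\1_{[0,t]}(s)\xi^{-1}(1-e^{-\xi(t-s)})$ on $\{s>0\}$. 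Substituting into the master formula and splitting the $s$-integral at $0$ gives exactly \eqref{e:cumfidisnormLform1}.

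For the second representation I would convert $\kappa_L$ into $\kappa_X$ via \eqref{kappaXtoL}--\eqref{kappaLtoX}. Writing $h(\xi,s)$ for the $s>0$ kernel $A_T^{-1}\sum_j\zeta_j\1_{[0,Tt_j]}(s)\xi^{-1}(1-e^{-\xi(Tt_j-s)})$, two observations drive the computation. On $\{s<0\}$ all $\zeta_j$-terms share the common factor $e^{\xi s}$, so the integrand is $\kappa_L\big(h(\xi,0)e^{\xi s}\big)$ and the inner integral $\xi\int_{-\infty}^0\kappa_L(h(\xi,0)e^{\xi s})\,ds$ collapses, by the substitution $r=-\xi s$ and \eqref{kappaXtoL}, to $\kappa_X(h(\xi,0))$. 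On $\{s>0\}$ I apply \eqref{kappaLtoX}, $\kappa_L(h)=h\,\kappa_X'(h)$, and use $\xi h(\xi,s)=A_T^{-1}\sum_j\zeta_j\1_{[0,Tt_j]}(s)(1-e^{-\xi(Tt_j-s)})$ to turn the $s>0$ contribution into an integral of $\kappa_X'(h)$ carrying the weight $(1-e^{-\xi(Tt_j-s)})$. Finally, since $h(\xi,\cdot)$ is continuous, vanishes for $s>\max_j Tt_j$, and $\kappa_X(0)=0$, the fundamental theorem of calculus gives
\[
\kappa_X\big(h(\xi,0)\big)=A_T^{-1}\int_0^\infty \kappa_X'\big(h(\xi,s)\big)\sum_j\zeta_j\1_{[0,Tt_j]}(s)\,e^{-\xi(Tt_j-s)}\,ds,
\]
i.e.\ a $\kappa_X'$-integral with the complementary weight $e^{-\xi(Tt_j-s)}$. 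Adding the two contributions, the weights $(1-e^{-\xi(Tt_j-s)})$ and $e^{-\xi(Tt_j-s)}$ sum to $1$, and everything collapses to $A_T^{-1}\sum_i\zeta_i\int_{\R_+}\int_0^{Tt_i}\kappa_X'(h)\,ds\,\pi(d\xi)$, which is \eqref{e:cumfidisnorm1}.

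The kernel computation and the changes of variables are routine. The main obstacle is the rigorous justification of the stochastic Fubini step and of the cumulant formula for the (possibly non-$L^1$) kernel $g_t$, which I would handle through the Rajput--Rosinski integrability criterion together with the standing finite-variance assumption; a secondary delicate point is the redistribution step, where one must check that $h(\xi,\cdot)$ is absolutely continuous --- the apparent jumps at $s=Tt_j$ cancel because $1-e^{-\xi(Tt_j-s)}$ vanishes there --- so that the fundamental theorem of calculus and the interchange of the finite sum with the $\pi(d\xi)$ and $s$-integrals are all legitimate.
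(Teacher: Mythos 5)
Your proposal is correct and follows essentially the same route as the paper: there too, $\sum_j \zeta_j X^*(Tt_j)$ is written as a single integral against $\Lambda$ (via the random linear functional $\mathcal{X}(f)$ with $f$ the step function $\sum_j \zeta_j \1_{[0,t_j]}$, whose kernel $F_f$ is exactly your $\sum_j \zeta_j g_{Tt_j}$), and both expressions then come from the cumulant formula for L\'evy basis integrals together with the relations \eqref{kappaXtoL}--\eqref{kappaLtoX}. The only difference is packaging: where you re-derive the passage from the $\kappa_L$-form to the $\kappa_X'$-form by hand (your fundamental-theorem-of-calculus/weight-redistribution step, which is indeed the identity underlying the cited result), the paper simply invokes \cite[Theorem 5.1]{bn2001}, with \cite[Corollary 1]{jurek2001remarks} removing that theorem's extra assumptions.
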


\begin{proof}
Following \cite{bn2001}, we can use $X(t)$ in \eqref{supOU} to define a generalized stochastic process (random linear functional) $\mathcal{X}$ by
\begin{equation*}
\mathcal{X} (f) = \int_0^\infty f(t) X(t) dt = \int_0^\infty \int_{-\infty}^\infty F_f(\xi,s) \Lambda(d\xi, ds),
\end{equation*}
where
\begin{equation*}
F_f(\xi,s) = e^s \int_0^\infty f(t) e^{-\xi t} \mathbf{1}_{[s/\xi,\infty)}(t) dt
\end{equation*}
and $f \in \mathcal{F}_{\Lambda}:= \left\{ f:[0,\infty) \to \R : F_f(\xi,s) \text{ is } \Lambda\text{-integrable} \right\}$. By \cite[Theorem 5.1]{bn2001} (note that the assumptions there are not necessary by \cite[Corollary 1]{jurek2001remarks}, for $f\in \mathcal{F}_{\Lambda}$ it holds that
\begin{align*}
C \left\{ f \ddagger \X \right\} &= C \left\{ 1 \ddagger \X(f) \right\} = \log \E \exp \left\{\int_0^\infty f(t) X(t) dt \right\}\\
&= \int_0^\infty \int_{-\infty}^\infty \kappa_L \left( \int_0^\infty f(t+s) e^{-\xi t} dt \right) ds \xi \pi(d\xi)\\
&= \int_0^\infty \int_{0}^\infty f(s) \kappa_X' \left( \int_0^\infty f(t+s) e^{-\xi t} dt \right) ds \pi(d\xi),
\end{align*}
by \eqref{kappaLtoX}, where we implicitly assume $f(t)=0$ for $t<0$. By letting $f(u)=\sum_{j=1}^m \zeta_j \1_{[0,t_j]}(u)$, we obtain two forms of the joint cumulant function of the integrated process $X^*$. One is
\begin{align*}
C & \left\{ \zeta_1, \dots, \zeta_m \ddagger \left(X^*(t_1),\dots , X^*(t_m) \right) \right\} \\
&\qquad = \int_0^\infty \int_{-\infty}^0 \kappa_L \left( \sum_{j=1}^m \zeta_j \int_{-s}^{t_j-s} e^{-\xi t} dt \right) ds \xi \pi(d\xi)\\
&\qquad \qquad + \int_0^\infty \int_{0}^\infty \kappa_L \left( \sum_{j=1}^m \zeta_j \1_{[0,t_j]}(s) \int_{0}^{t_j-s} e^{-\xi t} dt \right)  ds \xi \pi(d\xi)\\
&\qquad = \int_0^\infty \int_{-\infty}^0 \kappa_L \left( \sum_{j=1}^m \zeta_j \xi^{-1} \left(e^{\xi s} - e^{-\xi (t_j - s)} \right)  \right) ds \xi \pi(d\xi)\\
&\qquad \qquad + \int_0^\infty \int_{0}^\infty \kappa_L \left( \sum_{j=1}^m \zeta_j \1_{[0,t_j]}(s)  \xi^{-1} \left(1 - e^{-\xi (t_j - s)} \right)  \right) ds \xi \pi(d\xi).
\end{align*}
The other form involves the cumulant function  $\kappa_X$ of $\{X(t), \, t \geq 0\}$ and is obtained by using \eqref{kappaLtoX}:
\begin{align*}
C & \left\{ \zeta_1, \dots, \zeta_m \ddagger \left(X^*(t_1),\dots , X^*(t_m) \right) \right\} \\
&\qquad = \sum_{i=1}^m \zeta_i \int_0^\infty \int_0^{t_i} \kappa_X' \left( \sum_{j=1}^m \zeta_j \1_{[0,t_j]}(s) \int_{0}^{t_j-s} e^{-\xi t} dt \right) ds \pi(d\xi)\\
&\qquad = \sum_{i=1}^m \zeta_i \int_0^\infty \int_0^{t_i} \kappa_X' \left( \sum_{j=1}^m \zeta_j \1_{[0,t_j]}(s)  \xi^{-1} \left(1 - e^{-\xi (t_j - s)} \right)  \right) ds \pi(d\xi).
\end{align*}
From here one gets \eqref{e:cumfidisnormLform1} and \eqref{e:cumfidisnorm1}.
\end{proof}

\bigskip

\begin{proof}[Proof of Theorem \ref{thm:FBMcase}]
Suppose first that $\mu_L= 0$ so that $L$ is Brownian motion with $\kappa_{L}(\zeta)=i^2 \frac{b}{2} \zeta^2$. Then $X$ is a Gaussian supOU process and $\kappa_X(\zeta) = -\frac{b}{4} \zeta^2$ by \eqref{kappaXtoL}. From \eqref{e:cumfidisnorm1} we have
\begin{equation}\label{gaussianmarginalsfidis}
C  \left\{ \zeta_1, \dots, \zeta_m \ddagger \left(A_T^{-1} X^*( T t_1),\dots , A_T^{-1} X^*( T t _m) \right) \right\} = - \frac{b}{2} \sum_{i=1}^m  \sum_{j=1}^m  \zeta_i \zeta_j R_T (t_i, t_j),
\end{equation}
where $A_T=T^{1-\alpha/2 } \ell(T)^{1/2}$ and
\begin{equation}\label{RTtitj}
R_T(t_i, t_j) = A_T^{-2} \int_0^\infty \int_0^{T t_i \wedge T t_j}\left( 1 -e^{-\xi (T t_j - s)} \right) ds  \xi^{-1} \pi(d\xi).
\end{equation}
By a change of variables we have
\begin{align}
R_T(t_i, t_j) & = A_T^{-2} \int_0^\infty \int_0^{T t_i \wedge T t_j}\left( 1 -e^{-\xi (T t_j - s)} \right) ds  \xi^{-1} \pi(d\xi)\label{e:norm1}\\
&= A_T^{-2} \int_0^\infty \int^{\xi Tt_j}_{\xi (T_j -T t_i \wedge T t_j) }\left( 1 -e^{-w} \right) dw  \xi^{-2} \pi(d\xi)\nonumber\\
&= A_T^{-2}\int_0^\infty \int^{\xi Tt_j}_{0}\left( 1 -e^{-w} \right) dw  \xi^{-2} \pi(d\xi)\nonumber\\
&\qquad \qquad -  A_T^{-2} \int_0^\infty \int_0^{\xi (Tt_j -T t_i \wedge T t_j) }\left( 1 -e^{-w} \right) dw  \xi^{-2} \pi(d\xi)\nonumber\\
&= A_T^{-2} \int_0^{\infty} \left( 1 -e^{-w} \right) \int_{w/(Tt_j)}^{\infty} \xi^{-2} \pi(d\xi) dw\label{RTpart1proof}\\
&\qquad \qquad - A_T^{-2} \int_0^{\infty} \left( 1 -e^{-w} \right)  \int_{w/ (Tt_j -T t_i \wedge T t_j) }^{\infty} \xi^{-2} \pi(d\xi) dw.\label{RTpart2proof}
\end{align}
Here we implicitly assume the second term vanishes if $t_i\wedge t_j=t_j$.

We next show that
\begin{equation}\label{slowlyvarying under int}
\int_0^{\infty} \left( 1 -e^{-w} \right) \int_{w/t}^{\infty} \xi^{-2} \pi(d\xi) dw \sim \frac{\Gamma(1+\alpha)}{(2-\alpha)(1-\alpha)} \ell(t)  t^{2-\alpha} 
\end{equation}
as $t\to \infty$. Indeed, by \eqref{regvarofp}, $\xi \mapsto p(\xi^{-1})$ is $(1-\alpha)$-regularly varying at infinity and by the change of variables $u=1/\xi$ and by using Karamata's theorem \cite[Theorem 1.5.11]{bingham1989regular} we have as $t\to \infty$
\begin{align*}
\int_{w/t}^{\infty} \xi^{-2} \pi(d\xi) &= \int_{w/t}^{\infty} \xi^{-2} p(\xi) d\xi = \int_{0}^{t/w} p(u^{-1}) du \sim \frac{1}{2-\alpha} \frac{t}{w} p(w/t)\\
&\sim \frac{\alpha}{2-\alpha} \ell(t/w) \left(\frac{t}{w} \right)^{2-\alpha}.
\end{align*}
Hence, the integral $\int_{1/t}^{\infty} \xi^{-2} \pi(d\xi)$ is regularly varying function at infinity in $t$ and it can be written in the form
\begin{equation}\label{proof:thm:intermittency-integrated:2}
\int_{1/t}^{\infty} \xi^{-2} \pi(d\xi) = \frac{\alpha}{2-\alpha} \ell_1(t) t^{2-\alpha},
\end{equation}
with $\ell_1$ slowly varying at infinity such that $\ell_1(t) \sim \ell(t)$ as $t \to \infty$. Consequently, we have
\begin{align*}
\int_0^{\infty} \left( 1 -e^{-w} \right) \int_{w/t}^{\infty} \xi^{-2} \pi(d\xi) dw &= \frac{\alpha}{2-\alpha} t^{2-\alpha} \int_0^\infty \ell_1(t/w)\left(1- e^{-w} \right) w^{\alpha-2} dw\\
&= \frac{\alpha}{2-\alpha} t^{2-\alpha} \int_0^\infty \ell_1(t z) \left(1- e^{-1/z} \right) z^{-\alpha} dz.
\end{align*}
It remains to show that the integral on the right varies slowly in $t$. The function $g(z):=(1-e^{-1/z}) z^{-\alpha}$ is regularly varying at infinity with index $-\alpha-1$ and regularly varying at zero with index $-\alpha$. Hence, we can choose $0<\delta<1-\alpha$ so that
\begin{equation}\label{proof:thm:intermittency-integrated:delta0}
\int_0^1 z^{-\delta} g(z) dz < \infty.
\end{equation}
From \eqref{proof:thm:intermittency-integrated:2} we have that
\begin{equation*}
\ell_1(t) = \frac{2-\alpha}{\alpha} t^{\alpha-2} \int_{1/t}^{\infty} \xi^{-2} p(\xi) d\xi  \leq \frac{2-\alpha}{\alpha} t^{\alpha},
\end{equation*}
since $p$ is the probability density. Therefore $t^{\delta}\ell_1(t)$ is locally bounded on $[0,\infty)$. By applying \cite[Proposition 4.1.2(a)]{bingham1989regular} it follows that
\begin{equation*}
\int_0^1 \ell_1(t z) g(z) dz  \sim \ell_1(t) \int_0^1 g(z) dz, \quad \text{ as } t \to \infty.
\end{equation*}
On the other hand, for $0<\delta<\alpha$
\begin{equation*}
\int_1^\infty z^{\delta} g(z) dz < \infty
\end{equation*}
and by application of \cite[Proposition 4.1.2(b)]{bingham1989regular} we obtain
\begin{equation*}
\int_1^\infty \ell_1(t z) g(z) dz  \sim \ell_1(t) \int_1^\infty g(z) dz, \quad \text{ as } t \to \infty.
\end{equation*}
Integrating by parts and using the properties of the Gamma function we have
\begin{equation*}
\int_0^\infty g(z) dz = \frac{1}{1-\alpha} \Gamma (1+\alpha) = \frac{\Gamma(\alpha)}{\alpha (1-\alpha)}.
\end{equation*}
This completes the proof of \eqref{slowlyvarying under int}.

Returning back to \eqref{RTpart1proof} and \eqref{RTpart2proof}, we obtain as $T\to \infty$ that
\begin{align*}
& R_T(t_i, t_j) \sim  A_T^{-2}  (Tt_j)^{2-\alpha} \ell_1(Tt_j) \frac{\Gamma(1+\alpha)}{(2-\alpha)(1-\alpha)} \\
&\qquad - A_T^{-2} (Tt_j -T t_i \wedge T t_j)^{2-\alpha} \ell_1(Tt_j -T t_i \wedge T t_j)  \frac{\Gamma(1+\alpha)}{(2-\alpha)(1-\alpha)} \\
&=\left( t_j^{2-\alpha} \frac{\ell_1(Tt_j)}{\ell(T)} - (t_j - t_i\wedge t_j)^{2-\alpha} \frac{\ell_1(T(t_j - t_i\wedge t_j))}{\ell(T)} \right) \frac{\Gamma(1+\alpha)}{(2-\alpha)(1-\alpha)} .
\end{align*}
By using the fact that $\ell_1(t) \sim \ell(t)$ as $t\to\infty$, it follows that
\begin{equation}\label{e:thmFBMRTconverges}
\lim_{T\to \infty} R_T(t_i, t_j) = \left( t_j^{2-\alpha} - (t_j - t_i\wedge t_j)^{2-\alpha} \right) \frac{\Gamma(1+\alpha)}{(2-\alpha)(1-\alpha)}.
\end{equation}
Since
\begin{equation*}
\lim_{T\to \infty} R_T(t_i, t_j) + \lim_{T\to \infty} R_T(t_j, t_i)= \left( t_j^{2-\alpha} + t_i^{2-\alpha} - |t_j - t_i|^{2-\alpha} \right) \frac{\Gamma(1+\alpha)}{(2-\alpha)(1-\alpha)},
\end{equation*}
we can rewrite \eqref{gaussianmarginalsfidis} after taking the limit $T\to\infty$ in the form
\begin{equation*}
- b  \frac{\Gamma(1+\alpha)}{(2-\alpha)(1-\alpha)} \sum_{i=1}^m  \sum_{j=1}^m  \zeta_i \zeta_j \frac{1}{2} \left( t_j^{2-\alpha} + t_i^{2-\alpha} - |t_j - t_i|^{2-\alpha} \right),
\end{equation*}
which gives the finite dimensional distributions of fractional Brownian motion. This proves the statement for the Gaussian supOU. Note that instead of the direct proof one could also use general results for Gaussian processes, e.g.~\cite[Lemma 5.1]{Taqqu1975}, as in \cite[Example 9]{GLST2017Arxiv}.\\


Assume now that $\mu_L \not\equiv 0$. Then we can make a decomposition of the L\'evy basis into $\Lambda_1$ with characteristic quadruple $(0,b,0,\pi)_1$ and $\Lambda_2$ with characteristic quadruple $(0,0,\mu_L,\pi)_1$. Consequently, we can represent $X(t)$ as
\begin{equation}\label{e:thmFBMdecompostioninproof}
X(t) = \int_{0}^\infty \int_{-\infty }^{\xi t}e^{-\xi t + s} \Lambda_1(d\xi,ds) + \int_{0}^\infty \int_{-\infty }^{\xi t}e^{-\xi t + s} \Lambda_2(d\xi,ds) =: X_1(t) + X_2(t)
\end{equation}
with $X_1$ and $X_2$ independent. Let $X^*_1(t)$, $X^*_2(t)$ be the corresponding integrated processes. Since $X^*_1(t)$ is Gaussian, the preceding argument applies to show convergence to fractional Brownian motion. It remains to prove that $A_T^{-1} X^*_2(Tt) \overset{P}{\to} 0$ as $T\to \infty$. We shall do so by showing that its cumulant function tends to $0$ as $T\to \infty$.

Let $\kappa_{L,2}$ denote the cumulant function corresponding to $\Lambda_2$, i.e.
\begin{equation}\label{e:cum2}
\kappa_{L,2} (\zeta) = \int_{\R}\left( e^{i\zeta x}-1-i\zeta x \right) \mu_L(dx),
\end{equation}
and suppose $\kappa_{X,2}$ is the cumulant function of the corresponding selfdecomposable distribution (see \eqref{kappaXtoL} and \eqref{kappaLtoX}). By \eqref{regvarofp}, we can write $p$ in the form $p(x)=\alpha \widetilde{\ell}(x^{-1}) x^{\alpha-1}$ with $\widetilde{\ell}$ slowly varying at infinity such that $\widetilde{\ell}(t) \sim \ell(t)$ as $t \to \infty$. From \eqref{e:cumfidisnorm1} we have by a change of variables
\begin{equation}\label{e:kappa:x2normalized}
\begin{aligned}
& C \left\{ \zeta \ddagger T^{-1 + \alpha/2 } \ell(T)^{-1/2} X^*_2(Tt) \right\} =\\
& = T^{-1+\alpha/2} \ell(T)^{-1/2} \zeta \\
&\hspace{3em} \times \int_0^\infty \int_0^{Tt} \kappa_{X,2}' \left( T^{-1+\alpha/2} \ell(T)^{-1/2} \xi^{-1} \left( 1- e^{-\xi (Tt-s)} \right) \zeta\right) ds \pi(d\xi)\\
& = T^{\alpha/2} \ell(T)^{-1/2} \zeta \\
&\hspace{3em} \times \int_0^\infty \int_0^{t} \kappa_{X,2}' \left( T^{-1+\alpha/2} \ell(T)^{-1/2} \xi^{-1} \left( 1- e^{-\xi T s} \right) \zeta\right) ds \pi(d\xi)\\
& = T^{\alpha/2} \ell(T)^{-1/2} \zeta \\
&\hspace{3em} \times \int_0^\infty \int_0^{t} \kappa_{X,2}' \left( T^{\alpha/2} \ell(T)^{-1/2} \xi^{-1} \left( 1- e^{-\xi s} \right) \zeta\right) ds \pi(T^{-1} d\xi).
\end{aligned}
\end{equation}
Since $\pi(dx)=\alpha \widetilde{\ell}(x^{-1}) x^{\alpha-1} dx$, the last equation becomes
\begin{align}
&T^{\alpha/2} \ell(T)^{-1/2} \zeta \nonumber \\
&\hspace{3em} \times \int_0^\infty \int_0^{t} \kappa_{X,2}' \left( T^{\alpha/2} \ell(T)^{-1/2} \xi^{-1} \left( 1- e^{-\xi s} \right) \zeta\right) \alpha \widetilde{\ell}(T \xi^{-1}) \xi^{\alpha-1} T^{-\alpha} ds d\xi\nonumber\\
& = \alpha \zeta^2 \int_0^\infty \int_0^{t} k \left( T^{\alpha/2} \ell(T)^{-1/2} \xi^{-1} \left( 1- e^{-\xi s} \right) \zeta\right) \frac{\widetilde{\ell}(T \xi^{-1})}{\ell(T)} \left( 1- e^{-\xi s} \right) \xi^{\alpha-2} ds d\xi\label{FBMcaseproof:fununderint}.
\end{align}
We now focus on the function $k(\zeta)$. Since by \eqref{kappaLtoX} $\kappa_{L,2}(\zeta)=\zeta \kappa_{X,2}'(\zeta)$, we have
\begin{equation*}
k(\zeta) = \frac{\kappa_{X,2}'(\zeta)}{\zeta} = \frac{\kappa_{L,2}(\zeta)}{\zeta^2}.
\end{equation*}
By \eqref{e:cum2}, $\kappa_{L,2} (\zeta) = \int_{\R}\left( e^{i\zeta x}-1-i\zeta x \right) \mu_L(dx)$. Since $\left|e^{i\zeta x} - 1-i\zeta x\right| \leq \frac{1}{2} \zeta^2 x^2$, we get
\begin{equation}\label{e:bd}
\frac{\left| \kappa_{L,2}(\zeta) \right|}{\zeta^2} \leq \frac{1}{2} \int_{\R} x^2 \mu_L(dx) \leq  C
\end{equation}
for any $\zeta \in \R$. Hence, by the dominated convergence theorem $|\kappa_{L,2} (\zeta) | / \zeta^2 \to 0$ as $\zeta \to \infty$ (see also \cite[Eq.~(39)]{philippe2014contemporaneous}). We conclude that $k$ is bounded function such that $|k(\zeta)|\to 0$ as $\zeta\to \infty$.

Let $h_T(\xi,s)$ denote the function under the integral in \eqref{FBMcaseproof:fununderint}. Since $h_T(\xi,s) \to 0$ as $T\to \infty$, it remains to show that the dominated convergence theorem is applicable. Take $0<\delta<\min \left\{ \alpha,1-\alpha\right\}$. By Potter's bounds \cite[Theorem 1.5.6]{bingham1989regular}, there is $C_1$ such that $\widetilde{\ell}(T \xi^{-1})/\ell(T) \leq C_1 \max \left\{ \xi^{-\delta}, \xi^{\delta}\right\}$ and hence
\begin{equation*}
|h_T(\xi,s)| \leq C C_1 \max \left\{ \xi^{-\delta}, \xi^{\delta}\right\} \xi^{\alpha-2} \left( 1- e^{-\xi s} \right),
\end{equation*}
which is integrable. Indeed,
\begin{align*}
&\int_0^\infty \int_0^{t} \max \left\{ \xi^{-\delta}, \xi^{\delta}\right\} \xi^{\alpha-2} \left( 1- e^{-\xi s} \right) ds d\xi\\
&\qquad = \int_0^1 \xi^{\alpha-3-\delta} \left( e^{-\xi t} - 1 + \xi t \right)  d\xi + \int_1^\infty \xi^{\alpha-3 + \delta} \left( e^{-\xi t} - 1 + \xi t \right) d\xi\\
&\qquad \leq  t^2 \int_0^1 \xi^{\alpha-1-\delta} d\xi + t \int_1^\infty \xi^{\alpha-2 + \delta} d\xi < \infty.
\end{align*}
Hence the cumulant function of $A_T^{-1}X_2^*(Tt)$ tends to $0$ as $T\to \infty$. Therefore $A_T^{-1}X_2^*(Tt)$ tends to $0$ in distribution and hence in probability. 
\end{proof}

\begin{proof}[Proof of Theorem \ref{thm:SLPcase}]
Let $0=t_0<t_1<\cdots<t_m$, $\zeta_1,\dots,\zeta_m \in \R$ and $A_T=\allowbreak T^{1/(1+\alpha)} \allowbreak \ell^{\#}(T)^{1/(1+\alpha)} $. Note that the de Bruijn conjugate $\ell^{\#}$ exists by \cite[Theorem 1.5.13]{bingham1989regular} and satisfies
\begin{equation}\label{e:thmSPL2dBcon}
\frac{\ell^{\#} \left(T\right)}{\ell\left( \left( T\ell^{\#}\left(T \right) \right)^{1/(1+\alpha)} \right) } \sim 1, \text{  as } T\to \infty.
\end{equation}
It will be enough to prove that
\begin{equation}\label{e:thm:SLP1}
\sum_{i=1}^m \zeta_i A_T^{-1} \left( X^*(Tt_{i}) - X^*(Tt_{i-1}) \right) \overset{d}{\to} \sum_{i=1}^m \zeta_i \left( L_{1+\alpha} (t_i) - L_{1+\alpha} (t_{i-1}) \right).
\end{equation}
By using \eqref{supOU} we have that
\begin{equation}\label{e:thmSLP:decomposition}
\begin{aligned}
X^*(Tt_{i})& - X^*(Tt_{i-1}) = \int_{T t_{i-1}}^{T t_i} \int_0^\infty \int_{-\infty}^{\xi u} e^{-\xi u + s} \Lambda(d\xi,ds) du  \\
&=\int_0^\infty \int_{-\infty}^{\xi T t_{i-1}} \int_{T t_{i-1}}^{T t_i} e^{-\xi u + s} du \Lambda(d\xi,ds)\\
& \qquad \qquad +  \int_0^\infty \int_{\xi T t_{i-1}}^{\xi T t_i} \int_{s/\xi}^{T t_i} e^{-\xi u + s} du \Lambda(d\xi,ds)\\
&=: \Delta X^*_{(1)}(Tt_{i}) + \Delta X^*_{(2)}(Tt_{i})
\end{aligned}
\end{equation}
with $\Delta X^*_{(1)}(Tt_{i})$ and $\Delta X^*_{(2)}(Tt_{i})$ independent. Moreover, $\Delta X^*_{(2)}(Tt_{i})$, $i=1,\dots,m$ are independent, hence, to prove \eqref{e:thm:SLP1}, it will be enough to prove that
\begin{align}
A_T^{-1} \Delta X^*_{(1)}(Tt_{i}) &\overset{d}{\to} 0,\label{e:thm:SLPtoprove1}\\
A_T^{-1} \Delta X^*_{(2)}(Tt_{i}) &\overset{d}{\to}  L_{1+\alpha} (t_i) - L_{1+\alpha} (t_{i-1}),\label{e:thm:SLPtoprove2}
\end{align}
Due to stationary increments, it is enough to consider $t_i=t_1=t$ so that $t_{i-1}=0$.\\

Consider first $\Delta X^*_{(2)}(Tt)$. Note first that for any $\Lambda$-integrable function $f$ on $\R_+ \times \R$, it holds that (see \cite{rajput1989spectral})
\begin{equation}\label{integrationrule}
C\left\{ \zeta \ddagger \int_{\R_+ \times \R}f d\Lambda \right\} = \int_{\R_+ \times \R} \kappa_{L} (\zeta f(\xi,s)) ds d\xi.
\end{equation}
Writing the density $p$ in the form $p(x)=\alpha \widetilde{\ell}(x^{-1}) x^{\alpha-1}$, $\widetilde{\ell}(t) \sim \ell(t)$ as $t \to \infty$, we have
\begin{align}
C \left\{ \zeta \ddagger A_T^{-1} \Delta X^*_{(2)}(Tt) \right\} &= \int_0^\infty \int_{0}^{\xi T t} \kappa_L \left( \zeta A_T^{-1} \int_{s/\xi}^{Tt} e^{-\xi u + s} du \right) ds \pi(d\xi)\nonumber\\
&\hspace{-3em} = \int_0^\infty \int_{0}^{\xi T t} \kappa_L \left( \zeta A_T^{-1} \xi^{-1} \left( 1 - e^{-\xi Tt+s} \right) \right) ds \pi(d\xi)\nonumber\\
&\hspace{-3em} = \int_0^\infty \int_{0}^{t} \kappa_L \left( \zeta A_T^{-1} \xi^{-1} \left( 1 - e^{-\xi T (t-s)} \right) \right) \xi T ds \pi(d\xi)\nonumber\\
&\hspace{-3em} = \int_0^\infty \int_{0}^{t} \kappa_L \left( \zeta A_T^{-1} \xi^{-1} \left( 1 - e^{-\xi T (t-s)} \right) \right) \alpha \widetilde{\ell}(\xi^{-1}) \xi^{\alpha} T ds d\xi.\label{e:thmSPL22}
\end{align}
Suppose that $\zeta>0$, the proof is analogous in the other case. By the change of variables $x=\zeta A_T^{-1} \xi^{-1}$ in \eqref{e:thmSPL22} we have
\begin{align*}
C&\left\{ \zeta \ddagger A_T^{-1} \Delta X^*_{(2)}(Tt) \right\}\\
&= \zeta^{1+\alpha} \int_0^\infty \int_{0}^{t} \kappa_L \left( x \left( 1 - e^{-x^{-1} \frac{\zeta T}{A_T} (t-s)} \right) \right)  A_T^{-(1+\alpha)} T \widetilde{\ell}\left(A_T x \zeta^{-1}\right) \alpha x^{-\alpha-2} ds dx\\
&= \zeta^{1+\alpha} \int_0^\infty \int_{0}^{t} \kappa_L \left( x \left( 1 - e^{-x^{-1} \frac{\zeta T}{A_T} (t-s)} \right) \right) \\
&\hspace{5em} \times \frac{\widetilde{\ell}\left( T^{1/(1+\alpha)} \ell^{\#}\left(T \right)^{1/(1+\alpha)} x \zeta^{-1} \right) }{\ell^{\#} \left(T\right)} \alpha x^{-\alpha-2} ds dx.
\end{align*}
Since $T/A_T\to \infty$ as $T\to \infty$, we have that
\begin{equation*}
\kappa_L \left( x \left( 1 - e^{-x^{-1} \frac{\zeta T}{A_T} (t-s)} \right) \right) \to \kappa_L ( x ).
\end{equation*}
Due to slow variation of $\ell$, $\ell\sim \widetilde{\ell}$ and \eqref{e:thmSPL2dBcon}, we have
\begin{align*}
&\frac{\widetilde{\ell}\left( T^{1/(1+\alpha)} \ell^{\#}\left(T \right)^{1/(1+\alpha)} x \zeta^{-1} \right) }{\ell^{\#} \left(T\right)} \frac{\ell\left( T^{1/(1+\alpha)} \ell^{\#}\left(T \right)^{1/(1+\alpha)}\right) }{\ell\left( T^{1/(1+\alpha)} \ell^{\#}\left(T \right)^{1/(1+\alpha)}\right)}\\
& \hspace{5em} \sim \frac{\ell\left( \left( T\ell^{\#}\left(T \right) \right)^{1/(1+\alpha)} \right) }{\ell^{\#} \left(T\right)} \to 1,
\end{align*}
as $T \to \infty$. Hence, if the limit could be passed under the integral, we would get that
\begin{equation}\label{e:thm:SLP3}
C\left\{ \zeta \ddagger A_T^{-1} \Delta X^*_{(2)}(Tt) \right\} \to t \zeta^{1+\alpha} \int_0^\infty \kappa_L (x) \alpha x^{-\alpha-2} dx.
\end{equation}
From \eqref{kappacumfun1} with $b=0$ and the relation 
\begin{equation*}
\int_0^\infty \left(e^{ \mp iu} -1\pm iu\right)  u^{-\gamma-1} du = \exp \left\{  \mp \frac{1}{2} i \pi \gamma \right\} \frac{\Gamma(2-\gamma)}{\gamma (\gamma-1)}
\end{equation*}
valid for $1<\gamma<2$ (see e.g.~\cite[Theorem 2.2.2]{ibragimov1971independent}), we would then obtain after some computation with $\gamma=1+\alpha$,
\begin{align*}
\alpha & \int_0^\infty \kappa_L (x)  x^{-\alpha-2} dx = \alpha \int_{-\infty}^\infty \int_0^{\infty} \left(e^{ixy} -1-ixy\right)  x^{-\alpha-2} dx \mu_L(dy)\\
&= \alpha \int_{0}^\infty \int_0^{\infty} \left(e^{iu} -1-iu\right)  u^{-\alpha-2} du y^{1+\alpha}\mu_L(dy)\\
&\hspace{6em}  + \alpha \int_{-\infty}^0 \int_0^{\infty} \left(e^{-iu} -1+iu\right)  u^{-\alpha-2} du (-y)^{1+\alpha}\mu_L(dy)\\
&= \frac{\alpha\Gamma (1-\alpha)}{(1+\alpha)\alpha}  \left( e^{i(1+\alpha) \pi /2} \int_{0}^\infty y^{1+\alpha}\mu_L(dy) +  e^{-i(1+\alpha) \pi /2} \int_{-\infty}^0 |y|^{1+\alpha}\mu_L(dy) \right)\\
&= - \frac{\Gamma (1-\alpha)}{-\alpha}\\
&\hspace{1em} \times \Bigg( \cos \left(\frac{\pi (1+\alpha)}{2}\right) \left( \frac{\alpha}{1+\alpha} \int_{-\infty}^0 |y|^{1+\alpha} \mu_L(dy)  +  \frac{\alpha}{1+\alpha} \int_{0}^\infty y^{1+\alpha} \mu_L(dy)\right)\\
&\hspace{1em} -  i \sin \left(\frac{\pi (1+\alpha)}{2}\right) \left( \frac{\alpha}{1+\alpha} \int_{-\infty}^0 |y|^{1+\alpha} \mu_L(dy) - \frac{\alpha}{1+\alpha} \int_{0}^\infty y^{1+\alpha} \mu_L(dy) \right)\Bigg) \\
&= - \omega(\zeta; 1+ \alpha, c^-_\alpha, c^+_\alpha),
\end{align*}
where $\omega$ is defined in \eqref{omega} and $c^-_\alpha$, $c^+_\alpha$ in \eqref{c+-alpha}. The last equality holds because we suppose $\zeta>0$ and hence $\sign (\zeta) = 1$.

It remains to justify taking the limit under the integral in \eqref{e:thm:SLP3}. This can be done similarly as in \cite{philippe2014contemporaneous}. First, from Potter's bounds \cite[Theorem 1.5.6]{bingham1989regular}, for $0<\delta<\min \left\{ 1+\alpha - \beta_{BG}, 1-\alpha , \alpha\right\}$ there is $C_1$ such that
\begin{equation*}
\frac{\widetilde{\ell}\left( T^{1/(1+\alpha)} \ell^{\#}\left(T \right)^{1/(1+\alpha)} x \zeta^{-1} \right) }{\ell\left( T^{1/(1+\alpha)} \ell^{\#}\left(T \right)^{1/(1+\alpha)}\right)} \leq C_1 \max \left\{ x^{-\delta} \zeta^{\delta}, x^{\delta} \zeta^{-\delta} \right\}.
\end{equation*}
Hence, from \eqref{e:thmSPL2dBcon} we have that for $T$ large enough
\begin{equation}\label{e:thmSPLPotterbounds}
\frac{\widetilde{\ell}\left( T^{1/(1+\alpha)} \ell^{\#}\left(T \right)^{1/(1+\alpha)} x \zeta^{-1} \right) }{\ell^{\#} \left(T\right)} \leq C_2 \max \left\{ x^{-\delta} \zeta^{\delta}, x^{\delta} \zeta^{-\delta} \right\}.
\end{equation}
Next, note that we can bound $|\kappa_{L}(x)| \leq  \kappa_{L,1}(x) + \kappa_{L,2}(x)$ where $\kappa_{L,1}(x) = x^2 \int_{|y|\leq 1/|x|} y^2 \mu_L(dy)$ and $\kappa_{L,2}(x) = 2 |x| \int_{|y|> 1/|x|} |y| \mu_L(dy)$. Moreover,
\begin{equation}\label{kappaL1ntegrable}
\begin{aligned}
&\int_0^\infty \kappa_{L,1}(x) x^{-\alpha-2} \max \left\{ x^{-\delta} , x^{\delta} \right\} dx\\
&= \int_{|y|\leq 1} y^2 \mu_L(dy) \int_0^{1} x^{-\alpha-\delta} dx + \int_{|y|\leq 1} y^2 \mu_L(dy) \int_1^{1/|y|} x^{-\alpha+\delta} dx\\
& \hspace{4em} + \int_{|y|> 1} y^2 \mu_L(dy) \int_0^{1/|y|} x^{-\alpha-\delta} dx\\
& \leq C_3 + C_4 \int_{|y|\leq 1} |y|^{1+\alpha-\delta} \mu_L(dy)  + C_5 \int_{|y|\leq 1} |y|^{1+\alpha+\delta} \mu_L(dy)< \infty
\end{aligned}
\end{equation}
and
\begin{equation}\label{kappaL2integrable}
\begin{aligned}
&\int_0^\infty \kappa_{L,2}(x) x^{-\alpha-2} \max \left\{ x^{-\delta} , x^{\delta} \right\} dx\\
&= \int_{|y|\leq 1} |y| \mu_L(dy) \int_{1/|y|}^{\infty} x^{-\alpha-1+\delta} dx + \int_{|y|> 1} |y| \mu_L(dy) \int_1^{\infty} x^{-\alpha-1+\delta} dx\\
& \hspace{4em} + \int_{|y|> 1} |y| \mu_L(dy) \int_{1/|y|}^1 x^{-\alpha-1-\delta} dx\\
& \leq C_6 + C_7 \int_{|y|\leq 1} |y|^{1+\alpha-\delta} \mu_L(dy)  + C_8 \int_{|y|\leq 1} |y|^{1+\alpha+\delta} \mu_L(dy)< \infty
\end{aligned}
\end{equation}
by the choice of $\delta$.

Let $g_T(\zeta, x,s)=e^{-x^{-1} \frac{\zeta T}{A_T} (t-s)}$ and split $C\left\{ \zeta \ddagger A_T^{-1} \Delta X^*_{(2)}(Tt) \right\}$ into two parts:
\begin{equation}\label{e:splitI}
C\left\{ \zeta \ddagger A_T^{-1} \Delta X^*_{(2)}(Tt) \right\} = I_{T,1} + I_{T,2},
\end{equation}
where
\begin{align}
&\begin{split}\label{e:splitI1}
I_{T,1} &= \zeta^{1+\alpha} \int_0^\infty \int_{0}^{t} \kappa_L \left( x \left( 1 - g_T(\zeta, x,s) \right) \right) \frac{\widetilde{\ell}\left( T^{1/(1+\alpha)} \ell^{\#}\left(T \right)^{1/(1+\alpha)} x \zeta^{-1} \right) }{\ell^{\#} \left(T\right)}\\
&\qquad \qquad \qquad \times \alpha x^{-\alpha-2} \1_{[0,1/2]}(g_T(\zeta, x,s)) ds dx,\\
\end{split}\\
&\begin{split}\label{e:splitI2}
I_{T,2} &= \zeta^{1+\alpha} \int_0^\infty \int_{0}^{t} \kappa_L \left( x \left( 1 - g_T(\zeta, x,s) \right) \right)  \frac{\widetilde{\ell}\left( T^{1/(1+\alpha)} \ell^{\#}\left(T \right)^{1/(1+\alpha)} x \zeta^{-1} \right) }{\ell^{\#} \left(T\right)} \\
&\qquad \qquad \qquad \times \alpha x^{-\alpha-2} \1_{[1/2,1]}(g_T(\zeta, x,s)) ds dx.
\end{split}
\end{align}
We have that 
\begin{equation}\label{kappaL1bar}
\sup_{1/2 \leq c \leq 1} \kappa_{L,1}(c x) \leq x^2 \int_{|y|\leq 2/|x|} y^2 \mu_L(dy) =: \overline{\kappa}_{L,1}(x),
\end{equation}
where
\begin{equation}\label{kappaL1barintegrable}
\int_0^\infty \overline{\kappa}_{L,1}(x) x^{-\alpha-2} \max \left\{ x^{-\delta} , x^{\delta} \right\} dx < \infty
\end{equation}
by the same argument as in \eqref{kappaL1ntegrable}. Furthermore, we have that 
\begin{equation*}
\sup_{1/2 \leq c \leq 1} \kappa_{L,2}(c x) \leq \kappa_{L,2}(x),
\end{equation*}
and hence $|\kappa_L \left( x \left( 1 - g_T(\zeta,x,s) \right) \right) \1_{[0,1/2]}(g_T(\zeta,x,s)) | \leq \overline{\kappa}_{L,1}(x) + \kappa_{L,2}(x)$. By combining with \eqref{e:thmSPLPotterbounds}, we end up with the upper bound which is integrable by \eqref{kappaL2integrable} and \eqref{kappaL1barintegrable}. Hence the dominated convergence theorem may be applied to $I_{T,1}$ showing that $I_{T,1}$ converges to the limit in \eqref{e:thm:SLP3}.

We next show that $I_{T,2} \to 0$. Using the inequality 
\begin{equation*}
\left|e^{ix}-\sum_{k=0}^{n} \frac{(ix)^k}{k!} \right| \leq \min \left\{ \frac{|x|^{n+1}}{(n+1)!}, \frac{2 |x|^n}{n!} \right\},
\end{equation*}
we get by \eqref{kappacumfun1} that for any $x \in \R$,
\begin{equation*}
|\kappa_{L}(x)| \leq \int_{\R} \left| e^{ixy}-1-ixy\right| \mu_L(dy) \leq \int_{|xy|\leq 1} |x y|^2 \mu_L(dy) + 2 \int_{|xy|> 1} |x y| \mu_L(dy).
\end{equation*}
Then,  by taking $\gamma$ such that 
\begin{equation}\label{e:gam}
\max \{ \beta_{BG}, 1\}<\gamma<1+\alpha,
\end{equation}
we get
\begin{equation}\label{e:thmSLPboundonkappa}
|\kappa_{L}(x)| \leq \int_{|xy|\leq 1} |x y|^\gamma \mu_L(dy) + 2 \int_{|xy|> 1} |x y|^{\gamma} \mu_L(dy) \leq C_1 |x|^\gamma,
\end{equation}
since $\int_{\R}|y|^{\gamma} \mu_L(dy) < \infty$. Now since $\1_{[1/2,1]}(g_T(\zeta, x,s)) = \1_{\left[ \frac{\zeta  (t-s) T}{A_T \log 2 }, \infty \right)}(x)$, we have by using \eqref{e:thmSPLPotterbounds} for $\delta<1+\alpha-\gamma$
\begin{equation}\label{e:thmSLPproofIT2zero}
\begin{aligned}
\left| I_{T,2} \right| &\leq C_2 \int_0^\infty \int_{0}^{t} x^{\gamma-\alpha-2}  \max \left\{ x^{-\delta} , x^{\delta} \right\} \1_{\left[ \frac{\zeta  (t-s) T}{A_T \log 2}, \infty \right)}(x) ds dx\\
&= C_2 \int_{0}^{t} \int_0^1 x^{\gamma-\alpha-2-\delta} \1_{\left[ \frac{\zeta u T}{A_T \log 2}, \infty \right)}(x)  dx du\\
&\hspace{5em} + C_2 \int_{0}^{t} \int_1^\infty x^{\gamma-\alpha-2+\delta} \1_{\left[ \frac{\zeta u T}{A_T \log 2}, \infty \right)}(x)  dx du\\
&= C_3 \int_{0}^{t} \1_{\left(0, \frac{A_T \log 2}{\zeta T} \right]}(u) du - C_4 \left(\frac{T}{A_T}\right)^{\gamma-\alpha-1-\delta} \int_0^t u^{\gamma-\alpha-1-\delta} \1_{\left(0, \frac{A_T \log 2}{\zeta T} \right]}(u) du\\
&\qquad \qquad + C_6 \left(\frac{T}{A_T}\right)^{\gamma-\alpha-1+\delta} \int_0^t u^{\gamma-\alpha-1+\delta} \1_{\left[\frac{A_T  \log 2}{\zeta T}, \infty \right)}(u) du \to 0,
\end{aligned}
\end{equation}
as $T\to \infty$, which completes the proof of \eqref{e:thm:SLPtoprove2}.\\

To complete the proof, it remains to show \eqref{e:thm:SLPtoprove1}. From \eqref{integrationrule} and by making change of variables we get that 
\begin{align*}
C\left\{ \zeta \ddagger A_T^{-1} \Delta X^*_{(1)}(Tt) \right\} &= \int_0^\infty \int_{-\infty}^{0} \kappa_L \left( \zeta A_T^{-1} \int_{0}^{Tt} e^{-\xi u + s} du \right) ds \pi(d\xi)\\
&\hspace{-6em}= \int_0^\infty \int_{-\infty}^{0} \kappa_L \left( \zeta A_T^{-1} e^s \xi^{-1} \left(1-e^{-\xi Tt} \right)\right) ds \pi(d\xi)\\
&\hspace{-6em}= \int_0^\infty \int_{-\infty}^{0} \kappa_L \left( \zeta T A_T^{-1} e^s \xi^{-1} \left(1-e^{-\xi t} \right)\right) ds \pi(T^{-1} d\xi)\\
&\hspace{-6em}= \int_0^\infty \int_{-\infty}^{0} \kappa_L \left( \zeta T A_T^{-1} e^s \xi^{-1} \left(1-e^{-\xi t} \right)\right) \alpha \widetilde{\ell}(T \xi^{-1}) \xi^{\alpha-1} T^{-\alpha} ds d\xi.
\end{align*}
By using Potter's bounds \cite[Theorem 1.5.6]{bingham1989regular} we have for $\delta>0$
\begin{equation*}
\widetilde{\ell}(T \xi^{-1}) = \frac{\widetilde{\ell}(T \xi^{-1})}{\widetilde{\ell}( \xi^{-1})} \widetilde{\ell}(\xi^{-1}) \leq C \max \left\{ T^{-\delta}, T^{\delta} \right\} \widetilde{\ell}(\xi^{-1}).
\end{equation*}
Taking $\gamma$ as in \eqref{e:gam} and using the bound in \eqref{e:thmSLPboundonkappa}, we get that
\begin{align}
&\left| C\left\{ \zeta \ddagger A_T^{-1} \Delta X^*_{(1)}(Tt) \right\} \right|\nonumber\\
&\leq C_2 |\zeta|^\gamma T^{\gamma-\alpha+\delta} A_T^{-\gamma} \int_0^\infty \int_{-\infty}^{0} e^{\gamma s} \left(\xi^{-1} \left(1-e^{-\xi t} \right) \right)^\gamma \widetilde{\ell}(\xi^{-1}) \xi^{\alpha-1} ds d\xi \nonumber\\
&\leq C_2 |\zeta|^\gamma T^{\gamma-\alpha+\delta} A_T^{-\gamma} \int_0^\infty \gamma^{-1} \widetilde{\ell}(\xi^{-1}) \xi^{\alpha-1} d\xi \to 0 \text{ as } T \to \infty,\label{e:thmSLPproofX1zero}
\end{align}
if we take $\delta$ small enough so that $\gamma-\alpha+\delta-\gamma/(1+\alpha)<0$.
\end{proof}

\bigskip

\begin{proof}[Proof of Theorem \ref{thm:Zcase}]
The proof relies on the following two facts proved in \cite[Eqs.~(41)-(42)]{philippe2014contemporaneous} (see also \cite[Theorem 4.15]{bismut1983calcul}) which follow from \eqref{LevyMCond}:
\begin{align}
\lim_{\lambda \to 0} \lambda \kappa_L \left( \lambda^{-1/\beta} \zeta \right) &= - |\zeta|^{\beta} \omega (\zeta; \beta, c^+, c^-), \qquad \text{for any } \zeta \in \R \label{PPSprop1}\\
|\kappa_L(\zeta)| &\leq C |\zeta|^{\beta},  \qquad \text{for any } \zeta \in \R.\label{PPSprop2}
\end{align}
Here, $c^+$, $c^-$ and $\beta$ are constants from \eqref{LevyMCond}. Note that from \eqref{mff} we can write for $\xi>0$
\begin{equation*}
\mff (\xi, t-s) - \mff(\xi, -s) = \begin{cases}
e^{\xi s} - e^{-\xi (t-s)}, & \text{ if } s<0,\\
1- e^{-\xi (t-s)}, & \text{ if } 0\leq s < t,\\
0, & \text{ if } s\geq t.\\
\end{cases}
\end{equation*}
Using this and the change of variables in \eqref{e:cumfidisnormLform1}, we get for $\zeta_1,\dots,\zeta_m \in \R$ and $t_1<\cdots<t_m$ that
\begin{align*}
&C \left\{ \zeta_1, \dots, \zeta_m \ddagger \left(T^{-1+\alpha/\beta} \ell(T)^{-1/\beta} X^*( T t_1),\dots , T^{-1+\alpha/\beta} \ell(T)^{-1/\beta} X^*( T t _m) \right) \right\} \\
&= \int_0^\infty \int_{-\infty}^\infty \kappa_L \left( T^{-1+\alpha/\beta} \ell(T)^{-1/\beta} \sum_{j=1}^m \zeta_j \xi^{-1} \left( \mathfrak{f}(\xi, T t_j-s) - \mathfrak{f}(\xi, -s)  \right) \right) ds \xi \pi(d\xi)\\
&= \int_0^\infty \int_{-\infty}^\infty \kappa_L \left( T^{\alpha/\beta} \ell(T)^{-1/\beta} \sum_{j=1}^m \zeta_j \xi^{-1}  \left( \mathfrak{f}(T^{-1} \xi, T t_j-s) - \mathfrak{f}(T^{-1} \xi, -s)  \right) \right)\\
&\hspace{12em} \times ds T^{-1} \xi \pi(T^{-1} d\xi)\\
&= \int_0^\infty \int_{-\infty}^\infty \kappa_L \left( T^{\alpha/\beta} \ell(T)^{-1/\beta} \sum_{j=1}^m \zeta_j \xi^{-1} \left( \mathfrak{f}(\xi, t_j-T^{-1}s) - \mathfrak{f}( \xi, -T^{-1} s)  \right) \right)\\
&\hspace{12em} \times ds T^{-1} \xi \pi(T^{-1} d\xi)\\
&= \int_0^\infty \int_{-\infty}^\infty \kappa_L \left( \left(T^{-\alpha} \ell(T)\right)^{-1/\beta} \sum_{j=1}^m \zeta_j \xi^{-1} \left( \mathfrak{f}(\xi, t_j-s) - \mathfrak{f}( \xi, -s)  \right) \right) ds \xi \pi(T^{-1} d\xi).
\end{align*}
Again, because of \eqref{regvarofp}, we can write $p$ in the form $p(x)=\alpha \widetilde{\ell}(x^{-1}) x^{\alpha-1}$ with $\widetilde{\ell}$ slowly varying at infinity such that $\widetilde{\ell}(t) \sim \ell(t)$ as $t \to \infty$. Now we have
\begin{align}
C & \left\{ \zeta_1, \dots, \zeta_m \ddagger \left(T^{-1+\alpha/\beta} \ell(T)^{-1/\beta} X^*( T t_1),\dots , T^{-1+\alpha/\beta} \ell(T)^{-1/\beta} X^*( T t _m) \right) \right\} \nonumber\\
&= \int_0^\infty \int_{-\infty}^\infty \kappa_L \left( \left(T^{-\alpha} \ell(T)\right)^{-1/\beta}  \xi^{-1} \sum_{j=1}^m \zeta_j  \left( \mathfrak{f}(\xi, t_j-s) - \mathfrak{f}( \xi, -s)  \right) \right) \nonumber\\
&\hspace{12em} \times \alpha T^{-\alpha} \widetilde{\ell}(T \xi^{-1}) \xi^{\alpha}  ds d\xi \label{e:proof:intformoment1} \\
\begin{split}
&= - \int_0^\infty \int_{-\infty}^\infty \left| \sum_{j=1}^m \zeta_j  \left( \mathfrak{f}(\xi, t_j-s) - \mathfrak{f}( \xi, -s)  \right) \right|^\beta \label{e:proof:intH}\\
&\qquad \times \omega \left( \sum_{j=1}^m \zeta_j  \left( \mathfrak{f}(\xi, t_j-s) - \mathfrak{f}( \xi, -s)  \right); \beta, c^+, c^- \right) h_T(\xi, s) \alpha \xi^{\alpha-\beta} ds d\xi,
\end{split}
\end{align}
where
\begin{align*}
&h_T(\xi,s) \\
&= - \frac{ \kappa_L \left( \left(T^{-\alpha} \ell(T) \xi^{\beta}\right)^{-1/\beta} \sum_{j=1}^m \zeta_j  \left( \mathfrak{f}(\xi, t_j-s) - \mathfrak{f}( \xi, -s)  \right) \right) T^{-\alpha} \ell(T) \xi^{\beta} \frac{\widetilde{\ell}(T \xi^{-1}) }{\ell(T)}}{\left| \sum_{j=1}^m \zeta_j  \left( \mathfrak{f}(\xi, t_j-s) - \mathfrak{f}( \xi, -s)  \right) \right|^\beta \omega \left( \sum_{j=1}^m \zeta_j  \left( \mathfrak{f}(\xi, t_j-s) - \mathfrak{f}( \xi, -s)  \right); \beta, c^+, c^- \right)}.
\end{align*}
By taking $\lambda=\lambda(T,\xi)=T^{-\alpha} \ell(T) \xi^\beta$ in \eqref{PPSprop1} and using slow variation of $\ell$, we conclude that $h_T(\xi,s) \to 1$ as $T\to \infty$ for each $\xi>0$, $s\in \R$. 

It remains to show that the dominated convergence theorem can be applied to get \eqref{e:Zalphabetafidis}. By using \eqref{PPSprop2} we have that
\begin{align*}
&\left| h_T(\xi,s) \right|\\
&\leq \frac{ C \left(T^{-\alpha} \ell(T) \xi^{\beta}\right)^{-1} \left|\sum_{j=1}^m \zeta_j  \left( \mathfrak{f}(\xi, t_j-s) - \mathfrak{f}( \xi, -s)  \right) \right|^\beta T^{-\alpha} \ell(T) \xi^{\beta} \frac{\widetilde{\ell}(T \xi^{-1}) }{\ell(T)}}{\left| \sum_{j=1}^m \zeta_j  \left( \mathfrak{f}(\xi, t_j-s) - \mathfrak{f}( \xi, -s)  \right) \right|^\beta \left|\omega \left( \sum_{j=1}^m \zeta_j  \left( \mathfrak{f}(\xi, t_j-s) - \mathfrak{f}( \xi, -s)  \right); \beta, c^+, c^- \right)\right|}.
\end{align*}
Since $|\omega(z;\beta, c^+, c^-)|$ does not depend on $z$, we have that
\begin{equation*}
\left| h_T(\xi,s) \right| \leq C_1 \frac{\widetilde{\ell}(T \xi^{-1}) }{\ell(T)}.
\end{equation*}
By Potter's bounds \cite[Theorem 1.5.6]{bingham1989regular}, for any $\delta>0$ there is $C_2$ such that $\widetilde{\ell}(T \xi^{-1})/\ell(T) \leq C_1 \max \left\{ \xi^{-\delta}, \xi^{\delta}\right\}$. Taking $\delta$ small enough, we get a bound for the function under the integral in \eqref{e:proof:intH} which is integrable.
\end{proof}

\bigskip

\begin{proof}[Proof of Theorem \ref{thm:BMcase}]
Let $0=t_0<t_1<\cdots<t_m$, $\zeta_1,\dots,\zeta_m \in \R$. Similarly as in the proof of Theorem \ref{thm:SLPcase}, to prove that
\begin{equation*}
\sum_{i=1}^m \zeta_i T^{-1/2} \left( X^*(Tt_{i}) - X^*(Tt_{i-1}) \right) \overset{d}{\to} \sum_{i=1}^m \zeta_i \widetilde{\sigma} \left( B (t_i) - B (t_{i-1}) \right).
\end{equation*}
it will be enough to prove that
\begin{align}
T^{-1/2} \Delta X^*_{(1)}(Tt_{i}) &\overset{d}{\to} 0,\label{e:thm:BMtoprove1}\\
T^{-1/2} \Delta X^*_{(2)}(Tt_{i}) &\overset{d}{\to}  \widetilde{\sigma} \left( B(t_i) - B(t_{i-1} \right).\label{e:thm:BMtoprove2}
\end{align}
where $\Delta X^*_{(1)}$ and $\Delta X^*_{(2)}$ are defined in \eqref{e:thmSLP:decomposition}. Due to stationary increments, it is enough to consider $t_i=t_1=t$ so that $t_{i-1}=0$.\\

A change of variables and \eqref{integrationrule} give 
\begin{align*}
C\left\{ \zeta \ddagger T^{-1/2} \Delta X^*_{(1)}(Tt) \right\} &= \int_0^\infty \int_{-\infty}^{0} \kappa_L \left( \zeta T^{-1/2} \int_{0}^{Tt} e^{-\xi u + s} du \right) ds \pi(d\xi)\\
&= \int_0^\infty \int_{-\infty}^{0} \kappa_L \left( \zeta T^{-1/2} e^s \xi^{-1} \left(1-e^{-\xi Tt} \right)\right) ds \pi(d\xi).
\end{align*}
Since for any $\xi>0$, $s<0$, $ \kappa_L \left( \zeta T^{-1/2} e^s \xi^{-1} \left(1-e^{-\xi Tt} \right)\right) \to 0$ as $T\to \infty$, it remains to show that the dominated convergence theorem is applicable. By \eqref{e:bd}, we get for any $\zeta \in \R$, $|\kappa_{L} (\zeta) |\leq \frac{1}{2}  \zeta^2 \int_{\R} x^2 \mu_L(dx) = C \zeta^2$. Hence, we have
\begin{equation*}
\left| \kappa_L \left( \zeta T^{-1/2} e^s \xi^{-1} \left(1-e^{-\xi Tt} \right)\right) \right| \leq C \zeta^2 T^{-1} e^{2s} \xi^{-2} \left(1-e^{-\xi Tt}\right)^2
\end{equation*}
and 
\begin{align*}
&\int_0^\infty \int_{-\infty}^{0} \zeta^2 T^{-1} e^{2s} \xi^{-2} \left(1-e^{-\xi Tt}\right)^2 ds \pi(d\xi)\\
&\hspace{5em}= \zeta^2 \int_0^\infty t^2 T \left(\xi T t\right)^{-2} \left(1-e^{-\xi Tt}\right)^2 ds \pi(d\xi)\\
&\hspace{5em}\leq \zeta^2 t \int_0^\infty \xi^{-1} \pi(d\xi) < \infty,
\end{align*}
since $(1-e^{-x})^2/x^2 \leq x^{-1}$, $x >0$. This completes the proof of \eqref{e:thm:BMtoprove1}.\\

Next, for $\Delta X^*_{(2)}(Tt)$ we have from \eqref{integrationrule}
\begin{align}
C\left\{ \zeta \ddagger T^{-1/2} \Delta X^*_{(2)}(Tt) \right\} &= \int_0^\infty \int_{0}^{\xi T t} \kappa_L \left( \zeta T^{-1/2} \int_{s/\xi}^{Tt} e^{-\xi u + s} du \right) ds \pi(d\xi)\nonumber\\
&\hspace{-5em}= \int_0^\infty \int_{0}^{\xi T t} \kappa_L \left( \zeta T^{-1/2} \xi^{-1} \left( 1 - e^{-\xi Tt+s} \right) \right) ds \pi(d\xi)\nonumber\\
&\hspace{-5em}= \int_0^\infty \int_{0}^{t} \kappa_L \left( \zeta T^{-1/2} \xi^{-1} \left( 1 - e^{-\xi T (t-s)} \right) \right) \xi T ds \pi(d\xi)\nonumber\\
&\hspace{-5em}=-\sigma^2 \zeta^2 \int_0^\infty \int_{0}^{t} h_T(\xi, s, \zeta) \xi^{-1} \left( 1 - e^{-\xi T (t-s)} \right)^2 ds \pi(d\xi),\label{e:thmBM111}
\end{align}
where
\begin{equation*}
h_T(\xi, s, \zeta) = - \frac{\kappa_L \left( \zeta T^{-1/2} \xi^{-1} \left( 1 - e^{-\xi T (t-s)} \right) \right)}{\sigma^2 \zeta^2 T^{-1} \xi^{-2} \left( 1 - e^{-\xi T (t-s)} \right)^2 }.
\end{equation*}
From \eqref{e:bd} we get that
\begin{equation*}
\left|h_T(\xi, s, \zeta) \xi^{-1} \left( 1 - e^{-\xi T (t-s)} \right)^2\right| = \frac{\left|\kappa_L \left( \zeta T^{-1/2} \xi^{-1} \left( 1 - e^{-\xi T (t-s)} \right) \right)\right| }{\sigma^2 \zeta^2 T^{-1} \xi^{-1} } \leq \frac{C}{\sigma^2} \xi^{-1},
\end{equation*}
and hence, the dominated convergence theorem can be applied. By using \eqref{kappaLtoX}, we have that $\Var L(1) = \kappa_{L}''(0) = 2 \kappa_{X}''(0) = 2 \sigma^2$. Since $\Var L(1)< \infty$ and $\E L(1)=0$, we can expand $\kappa_L(\zeta)=-\sigma^2 \zeta^2+o(|\zeta|^2)$ as $\zeta\to 0$. Now it follows that $-\kappa_L(\zeta)/(\sigma^2 \zeta^2) \to 1$ as $\zeta\to 0$ and hence
\begin{equation*}
h_T(\xi, s, \zeta) \xi^{-1} \left( 1 - e^{-\xi T (t-s)} \right)^2 \to \xi^{-1}
\end{equation*}
as $T\to \infty$. From \eqref{e:thmBM111} we conclude that
\begin{equation*}
C\left\{ \zeta \ddagger T^{-1/2} \Delta X^*_{(2)}(Tt) \right\} \to -\sigma^2 \zeta^2 t \int_0^\infty \xi^{-1} \pi(d\xi).
\end{equation*}
\end{proof}


\bigskip

\section{Proofs of weak convergence in function space}\label{s:proof-weak}

A useful formula which will be used many times is given in the following lemma (see \cite[Lemma 2]{von1965inequalities}).

\begin{lemma}\label{lemma:bound}
Let $Y$ be a random variable with characteristic function $\phi(\zeta)$ and moment $\E|Y|^r<\infty$, $0<r<2$. Then 
\begin{equation}\label{e:b1}
\E |Y|^r = k_r \int_{-\infty}^{\infty} \left( 1- \Re \phi(\zeta) \right) |\zeta|^{-r-1} d \zeta,
\end{equation}
where 
\begin{equation*}
k_r = 	\frac{\Gamma(r+1)}{\pi} \sin \left(\frac{r\pi}{2}\right)>0.
\end{equation*}
\end{lemma}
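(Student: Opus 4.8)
The plan is to prove the identity in two stages. First I would establish a purely deterministic pointwise formula expressing $|y|^r$ as a weighted integral of $1-\cos(\zeta y)$, and then I would integrate that formula against the law of $Y$, interchanging expectation and integration. The characteristic function $\phi$ enters only at the very last step, through the elementary relation $\E\cos(\zeta Y)=\Re\phi(\zeta)$.

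For the deterministic step I claim that for every $y\in\R$ and $0<r<2$,
\begin{equation*}
|y|^r = k_r \int_{-\infty}^{\infty} \frac{1-\cos(\zeta y)}{|\zeta|^{r+1}}\, d\zeta .
\end{equation*}
The integrand is even in $\zeta$, so the integral equals $2\int_0^\infty (1-\cos(\zeta y))\,\zeta^{-r-1}\,d\zeta$; substituting $u=\zeta|y|$ (for $y\neq 0$; the case $y=0$ is trivial) and using that cosine is even reduces the whole integral to $2|y|^r J_r$, where
\begin{equation*}
J_r=\int_0^\infty \frac{1-\cos u}{u^{r+1}}\,du .
\end{equation*}
To evaluate $J_r$ I would integrate by parts with antiderivative $u^{-r}/(-r)$: the boundary terms vanish because $1-\cos u = O(u^2)$ near the origin (using $r<2$) and $u^{-r}\to 0$ at infinity (using $r>0$), leaving $J_r=\tfrac1r\int_0^\infty u^{-r}\sin u\,du$. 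The remaining integral is the standard one $\int_0^\infty u^{s-1}\sin u\,du=\Gamma(s)\sin(\pi s/2)$, which at $s=1-r$ gives $\Gamma(1-r)\cos(\pi r/2)$. Combining the reflection formula $\Gamma(r)\Gamma(1-r)=\pi/\sin(\pi r)$ with $\Gamma(1+r)=r\Gamma(r)$ and the duplication identity $\sin(\pi r)=2\sin(\pi r/2)\cos(\pi r/2)$ collapses this to
\begin{equation*}
J_r=\frac{\pi}{2\,\Gamma(1+r)\sin(\pi r/2)}=\frac{1}{2k_r},
\end{equation*}
so that $2|y|^r J_r=|y|^r/k_r$, which is precisely the claimed pointwise identity.

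To finish, I set $y=Y$ and take expectations. Since $1-\cos(\zeta Y)\ge 0$, the integrand $(1-\cos(\zeta Y))|\zeta|^{-r-1}$ is nonnegative, so Tonelli's theorem applies unconditionally and permits interchanging $\E$ with $\int_{-\infty}^{\infty}d\zeta$:
\begin{equation*}
\E|Y|^r = k_r \int_{-\infty}^{\infty} \frac{\E\bigl(1-\cos(\zeta Y)\bigr)}{|\zeta|^{r+1}}\,d\zeta = k_r \int_{-\infty}^{\infty} \frac{1-\Re\phi(\zeta)}{|\zeta|^{r+1}}\,d\zeta,
\end{equation*}
using $\E\cos(\zeta Y)=\Re\E e^{i\zeta Y}=\Re\phi(\zeta)$. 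The hypothesis $\E|Y|^r<\infty$ ensures the common value is finite, so no integrability question is left open. The only genuinely delicate point is the evaluation of $J_r$: the sine integral is merely conditionally convergent once $r\ge 1$, so the appeal to the Mellin-type formula $\int_0^\infty u^{s-1}\sin u\,du=\Gamma(s)\sin(\pi s/2)$ must be justified by analytic continuation (or a Dirichlet-test argument at infinity) rather than by absolute convergence. Everything else is a routine substitution or an application of Tonelli.
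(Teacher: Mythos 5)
Your proof is correct, and it is essentially the argument behind the paper's own treatment: the paper does not prove Lemma \ref{lemma:bound} at all but cites it as Lemma 2 of von Bahr--Esseen, whose classical proof is exactly your two-step route --- the pointwise representation $|y|^r = k_r \int_{-\infty}^{\infty} (1-\cos(\zeta y))\,|\zeta|^{-r-1}\,d\zeta$ followed by Tonelli and $\E\cos(\zeta Y)=\Re\phi(\zeta)$. Your evaluation of the constant checks out (e.g.\ it gives $\pi/2$ at $r=1$, matching $\int_0^\infty u^{-1}\sin u\,du$), and you rightly flag the only delicate point, namely that $\int_0^\infty u^{-r}\sin u\,du$ is only conditionally convergent for $1\leq r<2$ and the Mellin formula must be extended to $s=1-r\in(-1,0]$ by analytic continuation or a Dirichlet-test argument.
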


In particular, if $Y$ is symmetric $\beta$-stable, $0<\beta<2$, with characteristic function $\phi(\zeta)=\exp\{s^\beta |\zeta|^\beta\}$, $s>0$, then for $0<r<\beta$
\begin{equation}\label{e:b2}
\E |Y|^r = k_r \int_{-\infty}^{\infty} \left( 1- \exp \{s^\beta |\zeta|^\beta\} \right) |\zeta|^{-r-1} d \zeta.
\end{equation}

\begin{proof}[Proof of Theorem \ref{thm:funconv}]
For an integrated process $X^*$ and normalizing sequence $A_T$, by \cite[Theorem 12.3, Eq.~(12.51)]{billingsley1968} and stationarity of increments it is enough to prove that for some $C>0$, $T_0\geq 1$, $\gamma >0$ and $a>1$, the bound
\begin{equation}\label{tightnessmomcrit}
\E \left|A_T^{-1} X^*(Tt)\right|^\gamma \leq C t^a,
\end{equation}
holds for all $t\in[0,1]$ and $T\geq T_0$.\\

\textit{For the case of Theorem \ref{thm:FBMcase}}, we take the second derivative of $\kappa_{X^*(Tt)}(A_T^{-1}\zeta)$ given by \eqref{e:cumfidisnorm1} with respect to $\zeta$ and let $\zeta\to 0$ to get that the variance is
\begin{equation}\label{varianceformula}
\E \left(A_T^{-1} X^*(Tt)\right)^2 = 2 \E \left(X(1)\right)^2 A_T^{-2} \int_0^\infty \int_0^{T t}\left( 1 -e^{-\xi (T t - s)} \right) ds  \xi^{-1} \pi(d\xi).
\end{equation}
By \eqref{regvarofp}, we can write $p$ in the form $p(x)=\alpha \widetilde{\ell}(x^{-1}) x^{\alpha-1}$ with $\widetilde{\ell}$ slowly varying at infinity such that $\widetilde{\ell}(t) \sim \ell(t)$ as $t \to \infty$. Since $A_T=T^{1-\alpha/2 } \ell(T)^{1/2}$, we have by the change of variables
\begin{align*}
\E \left(A_T^{-1} X^*(Tt)\right)^2 &= 2 \E \left(X(1)\right)^2 t T^{-1+\alpha} \int_0^\infty \int_0^{1}\left( 1 -e^{-\xi Tt u} \right) du  \alpha \frac{\widetilde{\ell}(\xi^{-1})}{\ell(T)} \xi^{\alpha-2} d\xi\\
 &= 2 \alpha \E \left(X(1)\right)^2 t^{2-\alpha} \int_0^\infty \int_0^{1}\left( 1 -e^{-x u} \right) \frac{\widetilde{\ell}(Tt x^{-1})}{\ell(T)} x^{\alpha-2} du dx.
\end{align*}
By using Potter's bounds \cite[Theorem 1.5.6]{bingham1989regular}, for arbitrary $\delta>0$ we obtain
\begin{align}
\E \left(A_T^{-1} X^*(Tt)\right)^2 &\leq C_1 t^{2-\alpha} \int_0^\infty \int_0^{1}\left( 1 -e^{-x u} \right) \max \left\{ \left(t/x\right)^\delta, \left(t/x\right)^{-\delta} \right\} x^{\alpha-2} du dx \nonumber\\
&\leq C_1 t^{2-\alpha-\delta} \int_0^\infty \int_0^{1}\left( 1 -e^{-x u} \right) \max \left\{ x^\delta, x^{-\delta} \right\} x^{\alpha-2} du dx\label{e:finiteint}\\
&\leq C_2 t^{2-\alpha-\delta},\nonumber
\end{align}
since the integral in \eqref{e:finiteint} is finite if we take $\delta$ small enough.
Hence, the tightness criterion \eqref{tightnessmomcrit} holds if we take $\delta<1-\alpha$.\\

\textit{For the case of Theorem \ref{thm:Zcase}}, note that from \eqref{e:proof:intformoment1}, by using \eqref{PPSprop2} and Potter's bounds as in the proof of Theorem \ref{thm:FBMcase}, we get
\begin{align*}
&\left| \kappa_{X^*} (A_T^{-1} \zeta, Tt) \right| \nonumber\\
&\leq \int_0^\infty \int_{-\infty}^\infty \left| \kappa_L \left( \left(T^{-\alpha} \ell(T)\right)^{-1/\beta} \zeta \xi^{-1} \left( \mathfrak{f}(\xi, t-s) - \mathfrak{f}( \xi, -s)  \right) \right) \right| \alpha T^{-\alpha} \widetilde{\ell}(T \xi^{-1}) \xi^{\alpha} ds d\xi \\
&\leq C\int_0^\infty \int_{-\infty}^\infty T^{\alpha} \ell(T)^{-1} \left|\zeta \right|^{\beta} \xi^{-\beta} \left( \mathfrak{f}(\xi, t-s) - \mathfrak{f}( \xi, -s)  \right)^{\beta} \alpha T^{-\alpha} \widetilde{\ell}(T \xi^{-1}) \xi^{\alpha} ds d\xi \\
&\leq C_1 \left|\zeta \right|^{\beta}  \int_0^\infty \int_{-\infty}^\infty \left( \mathfrak{f}(\xi, t-s) - \mathfrak{f}( \xi, -s)  \right)^{\beta} \max \left\{ \xi^{-\delta}, \xi^{\delta}\right\} \xi^{\alpha-\beta} ds d\xi,
\end{align*}
with $\mathfrak{f}$ given by \eqref{mff}. Now by the change of variables
\begin{align}
&\left| \kappa_{X^*} (A_T^{-1} \zeta, Tt) \right|\nonumber\\
&\leq C_1 t^{\beta-\alpha-1} \left|\zeta \right|^{\beta}  \int_0^\infty \int_{-\infty}^\infty \left( \mathfrak{f}(x/t, t-s) - \mathfrak{f}( x/t, -s)  \right)^{\beta} \max \left\{ (x/t)^{-\delta}, (x/t)^{\delta}\right\} x^{\alpha-\beta} ds dx\nonumber\\
&\leq C_1 t^{\beta-\alpha-\delta} \left|\zeta \right|^{\beta}  \int_0^\infty \int_{-\infty}^\infty \left( \mathfrak{f}(x/t, t-tu) - \mathfrak{f}( x/t, -tu)  \right)^{\beta} \max \left\{ x^{-\delta}, x^{\delta}\right\} x^{\alpha-\beta} du dx\nonumber\\
&= C_1 t^{\beta-\alpha-\delta} \left|\zeta \right|^{\beta}  \int_0^\infty \int_{-\infty}^\infty \left( \mathfrak{f}(x, 1-u) - \mathfrak{f}( x, -u)  \right)^{\beta} \max \left\{ x^{-\delta}, x^{\delta}\right\} x^{\alpha-\beta} du dx\nonumber\\
&= C_2 t^{\beta-\alpha-\delta} \left|\zeta \right|^{\beta}.\label{e:Zcaseweak-bound}
\end{align}
Let $\widetilde{Y}$ denote the symmetrization of random variable $Y$, i.e.~$\widetilde{Y}=Y-Y'$ with $Y'=^d Y$ and independent of $Y$. By \cite[Proposition 3.6.5]{gut2013probability}, if $\E Y=0$ and $\E|Y|^r<\infty$ for some $r \geq 1$, then it holds that $\E|Y|^r \leq \E |\widetilde{Y}|^r$. Now, since the characteristic function of the symmetrized random variable $ \widetilde{X^*}(Tt)$ is $|\exp \kappa_{X^*} (\zeta, Tt)|^2$, we have by applying Lemma \ref{lemma:bound} to $A_T^{-1} \widetilde{X^*}(Tt)$
\begin{equation}\label{momSLPcase111}
\begin{aligned}
\E \left| A_T^{-1} X^*(Tt)\right|^q &\leq \E \left| A_T^{-1} \widetilde{X^*}(Tt)\right|^q\\
&= k_q \int_{-\infty}^{\infty} \left( 1- |\exp \kappa_{X^*} (A_T^{-1} \zeta, Tt)|^2 \right) |\zeta|^{-q-1} d \zeta.
\end{aligned}
\end{equation}
Furthermore, using the inequality $\Re z\geq -|z|$, $z\in \mathbb{C}$, we have
\begin{equation*}
|\exp \kappa_{X^*} (A_T^{-1} \zeta, Tt)|^2 = \exp \{ 2 \Re \kappa_{X^*} (A_T^{-1} \zeta, Tt) \} \geq  \exp \{ - 2 |\kappa_{X^*} (A_T^{-1} \zeta, Tt) | \} ,
\end{equation*}
so that we get from \eqref{momSLPcase111} that
\begin{equation}\label{momtocumfun-inequality}
\E \left| A_T^{-1} X^*(Tt)\right|^q \leq k_q \int_{-\infty}^{\infty} \left( 1- \exp  \{ - 2 |\kappa_{X^*} (A_T^{-1} \zeta, Tt) | \} \right) |\zeta|^{-q-1} d \zeta.
\end{equation}
From the bound \eqref{e:Zcaseweak-bound} we get that for $1\leq q<\beta$
\begin{equation}\label{e:Zcasemomentbound}
\E \left| A_T^{-1} X^*(Tt)\right|^q \leq k_q \int_{-\infty}^{\infty} \left( 1- \exp  \{ - 2 C_2 t^{\beta-\alpha-\delta} \left|\zeta \right|^{\beta} \} \right) |\zeta|^{-q-1} d \zeta,
\end{equation}
By \eqref{e:b2}, the right hand side of \eqref{e:Zcasemomentbound} is the $q$-th absolute moment of a symmetric $\beta$-stable random variable with scale parameter $s=(2C_2)^{1/\beta} t^{(\beta-\alpha-\delta)/\beta}$. By using \cite[Property 1.2.17]{samorodnitsky1994stable} we obtain
\begin{equation}\label{e:6.8b}
\E \left| A_T^{-1} X^*(Tt)\right|^q \leq C_3 t^{(\beta-\alpha-\delta) q /\beta}.
\end{equation}
Taking $q>\beta/(\beta-\alpha-\delta)$ yields \eqref{tightnessmomcrit}.\\

\textit{Consider finally Theorem \ref{thm:BMcase}.} From the variance formula \eqref{varianceformula} by using $\int_0^{\infty} \xi^{-1} \pi(d\xi)<\infty$ and $A_T=T^{1/2}$ we have that
\begin{equation*}
\E \left(A_T^{-1} X^*(Tt)\right)^2 \leq  C_1 T^{-1} \int_0^\infty \int_0^{T t} ds \xi^{-1} \pi(d\xi) = C_2 t.
\end{equation*}
Similarly, by taking fourth derivative of $\kappa_{X^*(Tt)}(A_T^{-1}\zeta)$ with respect to $\zeta$ and letting $\zeta\to 0$ we get that the fourth cumulant $\kappa_{A_T^{-1} X^*(Tt)}^{(4)}$ of $\kappa_{X^*(Tt)}(A_T^{-1}\zeta)$ is
\begin{equation*}
\kappa_{A_T^{-1} X^*(Tt)}^{(4)} = 4 \kappa_{X}^{(4)} T^{-2} \int_0^\infty \int_0^{T t}\left( 1 -e^{-\xi (T t - s)} \right)^3 ds  \xi^{-3} \pi(d\xi),
\end{equation*}
where $\kappa_{X}^{(4)}$ is the fourth cumulant of $X(1)$. Now by using the assumption $\int_0^{\infty} \xi^{-2} \pi(d\xi)<\infty$, we get the bound
\begin{align*}
\kappa_{A_T^{-1} X^*(Tt)}^{(4)} &\leq 4 \kappa_{X}^{(4)} T^{-2} \int_0^\infty \int_0^{T t}\left( \frac{1 -e^{-\xi u}}{\xi u}\right) u du  \xi^{-2} \pi(d\xi)\\
&\leq  4 \kappa_{X}^{(4)} T^{-2}  \int_0^{T t} u du  \int_0^\infty \xi^{-2} \pi(d\xi)\\
&= C_3 T^{-2} \frac{(T t)^2}{2} = C_4 t^2.
\end{align*}
Finally then
\begin{equation*}
\E \left(A_T^{-1} X^*(Tt)\right)^4 = \kappa_{A_T^{-1} X^*(Tt)}^{(4)}  + 3 \left(\kappa_{A_T^{-1} X^*(Tt)}^{(2)}\right)^2 \leq C_4 t^2 + 3 C_2^2 t^2 \leq C_5 t^2,
\end{equation*}
and \eqref{tightnessmomcrit} holds.
\end{proof}

\bigskip

\section{Proofs related to intermittency}\label{s:proof-intermittency}

\begin{proof}[Proof of Theorem \ref{thm:momFBMcase}]
That $\tau_{X^*}(q)=q-\alpha$ for $q\geq 2$ follows from \eqref{tausupOUqqstar}  \cite[Theorem 7]{GLST2017Arxiv}. We will show that $\tau_{X^*}(1)=1-\frac{\alpha}{2}$. Since $\tau_{X^*}(0)=0$, $\tau_{X^*}$ is convex function (\cite[Proposition 2.1]{GLST2016JSP}) passing through three collinear points: $(0,0)$, $(1,1-\frac{\alpha}{2})$, $\left(2, 2\left(1-\frac{\alpha}{2}\right) \right)$. By \cite[Lemma 3]{GLST2017Arxiv}, $\tau_{X^*}$ is linear and $\tau_{X^*}(q)=\left( 1 - \frac{\alpha}{2}\right) q$ for $q\leq 2$ which would complete the proof.

To prove that $\tau_{X^*}(1)=1-\frac{\alpha}{2}$, let $X^*_1(t)$ and $X^*_2(t)$ be as in the decomposition \eqref{e:thmFBMdecompostioninproof} where $X_1$ corresponds to the Gaussian component and $X_2$ to the pure L\'evy component. Note that by convexity of $\tau_{X^*}$ we have $\tau_{X^*}(1) \leq \frac{1}{2} \left( \tau_{X^*}(0)+\tau_{X^*}(2) \right) = 1-\frac{\alpha}{2}$. On the other hand, since $\E X^*_2(t)=0$, for $x\in \R$ we have
\begin{equation*}
|x| = \left| x +  \E X^*_2(t) \right| \leq \E \left| x +  X^*_2(t) \right|.
\end{equation*}
By integrating with respect to distribution function of $X^*_1(t)$ one gets
\begin{equation*}
\E \left|X^*_1(t)\right| \leq \E \left|X^*_1(t)+X^*_2(t) \right|
\end{equation*}
and from here it follows that
\begin{equation*}
\tau_{X^*}(1) \geq \tau_{X_1^*}(1).
\end{equation*}
From \eqref{gaussianmarginalsfidis}, we have that $X_1^*(t)$ is Gaussian with zero mean and variance 
\begin{align*}
\E X_1^*(t)^2 &= b \int_0^\infty \int_0^{t}\left( 1 -e^{-\xi (t - s)} \right) ds  \xi^{-1} \pi(d\xi)\\
&=b \int_0^{\infty} \left( 1 -e^{-w} \right) \int_{w/(t)}^{\infty} \xi^{-2} \pi(d\xi) dw\\
&\sim b \frac{\Gamma(1+\alpha)}{(2-\alpha)(1-\alpha)} \ell(t)  t^{2-\alpha} = \widetilde{\sigma}^2 \ell(t)  t^{2-\alpha},
\end{align*}
as $t\to \infty$ by \eqref{slowlyvarying under int}. Since $\E \left|X^*_1(t)\right| = \left( \pi \E X_1^*(t)^2 /2 \right)^{1/2}$, we obtain that $\tau_{X_1^*}(1)=1-\frac{\alpha}{2}$. This proves that $\tau_{X^*}(1) \geq 1-\frac{\alpha}{2}$ and finally $\tau_{X^*}(1) = 1-\frac{\alpha}{2}$.

If $X^*$ is Gaussian, then by using the expression for absolute moments of Gaussian distribution we have for any even integer $q$ as $T\to\infty$
\begin{equation}\label{e:momFBM:part1}
\begin{aligned}
\E \left(A_T^{-1} X^*(Tt)\right)^q &= \frac{q!}{2^{q/2} (q/2)!} \left(A_T^{-2} \E X^*(Tt)^2\right)^{q/2} \to  \frac{q!}{2^{q/2} (q/2)!} \left( \widetilde{\sigma}^2 t^{2-\alpha} \right)^{q/2}\\
&= \E \left(\widetilde{\sigma} B_{1-\alpha/2}(t) \right)^q.
\end{aligned}
\end{equation}
Since we can take $q$ arbitrary large, by \cite[Theorem 1]{GLST2017Arxiv} it follows that $\tau_{X^*}(q)=\left( 1 - \frac{\alpha}{2}\right) q$ for every $q>0$ and hence \eqref{e:FBMtau0} holds.
\end{proof}

\bigskip

\begin{proof}[Proof of Theorem \ref{thm:momSLPcase}]
Suppose first that $q<1+\alpha$ and let $A_T=T^{1/(1+\alpha)} \ell^{\#}\left(T \right)^{1/(1+\alpha)}$. We will show that $\{|A_T^{-1} X^*(Tt)|^q\}$ is uniformly integrable so that 
\begin{equation*}
\E |A_T^{-1} X^*(Tt)|^q \to \E |L_{1+\alpha}(t)|^q \ \text{  as } T\to\infty. 
\end{equation*}
We may assume that $q>1$. We first bound the cumulant function and then use \eqref{momtocumfun-inequality}. Using the notation from the proof of Theorem \ref{thm:SLPcase}, we have from \eqref{e:thmSLP:decomposition}
\begin{equation*}
| \kappa_{X^*} (A_T^{-1} \zeta, Tt) | \leq | \kappa_{\Delta X_{(1)}^*} (A_T^{-1} \zeta, Tt) | + |\kappa_{\Delta X_{(2)}^*} (A_T^{-1} \zeta, Tt)|.
\end{equation*}
Given $q>1$, we may take $\varepsilon$ small enough so that $q<1+\alpha-\varepsilon=:\gamma$ and $\max \{ \beta_{BG}, 1\}<\gamma<1+\alpha$. From \eqref{e:thmSLPproofX1zero} we have that
\begin{equation*}
| \kappa_{\Delta X_{(1)}^*} (A_T^{-1} \zeta, Tt) | \leq  C_1 |\zeta|^{\gamma},
\end{equation*}
and from \eqref{e:splitI1} and \eqref{e:splitI2}
\begin{equation*}
|\kappa_{\Delta X_{(2)}^*} (A_T^{-1} \zeta, Tt)| \leq |I_{T,1}| + |I_{T,2}|.
\end{equation*}
Suppose that $\zeta>0$, the argument is analogous in the other case. Note that $\max \left\{ x^{-\varepsilon} \zeta^\varepsilon , x^{\varepsilon} \zeta^{-\varepsilon} \right\}\leq \max \left\{ \zeta^\varepsilon , \zeta^{-\varepsilon} \right\} \max \left\{ x^{-\varepsilon}, x^{\varepsilon} \right\}$. Using \eqref{e:thmSPLPotterbounds} with $\delta=\varepsilon$, we get the bound
\begin{align*}
\left| I_{T,1} \right| &\leq C_2 \zeta^{1+\alpha} \max \left\{ \zeta^\varepsilon , \zeta^{-\varepsilon} \right\} \int_0^\infty \int_{0}^{t} \left( \overline{\kappa}_{L,1}(x) + \kappa_{L,2}(x) \right) \alpha x^{-\alpha-2} ds dx\\
&\leq C_3 \zeta^{1+\alpha} \max \left\{ \zeta^\varepsilon , \zeta^{-\varepsilon} \right\},
\end{align*}
where $\overline{\kappa}_{L,1}$ is defined in \eqref{kappaL1bar} and the integral is finite by \eqref{kappaL2integrable} and \eqref{kappaL1bar}. For $|I_{T,2}|$ we arrive at the following bound by modifying \eqref{e:thmSLPproofIT2zero}
\begin{align*}
\left| I_{T,2} \right| &\leq C_4 \zeta^{1+\alpha} \int_{0}^{t} \int_0^\infty x^{\gamma-\alpha-2}  \max \left\{ x^{-\delta} \zeta^\delta , x^{\delta} \zeta^{-\delta} \right\} \1_{\left[ \frac{\zeta u T}{ A_T \log 2}, \infty \right)}(x) dx du\\
&= C_4 \zeta^{1+\alpha+\delta} \int_{0}^{t} \int_0^\zeta x^{\gamma-\alpha-2-\delta} \1_{\left[ \frac{\zeta u T}{ A_T \log 2}, \infty \right)}(x) dx du\\
&\qquad \qquad + C_2 \zeta^{1+\alpha-\delta} \int_{0}^{t} \int_\zeta^\infty x^{\gamma-\alpha-2+\delta} \1_{\left[ \frac{\zeta u T}{A_T \log 2}, \infty \right)}(x) dx du\\
&= C_4 \zeta^{\gamma} \int_{0}^{t} \int_0^1 y^{\gamma-\alpha-2-\delta} \1_{\left[ \frac{u T}{A_T \log 2}, \infty \right)}(y) dy du\\
&\qquad \qquad + C_4 \zeta^{\gamma} \int_{0}^{t} \int_1^\infty y^{\gamma-\alpha-2+\delta} \1_{\left[ \frac{u T}{A_T \log 2}, \infty \right)}(y) dy du \leq C_5 \zeta^\gamma,
\end{align*}
with the last inequality coming from the fact that both integrals converge to zero by \eqref{e:thmSLPproofIT2zero}. By combining these bounds, we conclude that
\begin{equation}\label{e:thmmomSLPcasebound}
\begin{aligned}
| \kappa_{X^*} (A_T^{-1} \zeta, Tt) | &\leq C_1 |\zeta|^{\gamma} + C_3 |\zeta|^{1+\alpha} \max \left\{ |\zeta|^\varepsilon , |\zeta|^{-\varepsilon} \right\} + C_5 |\zeta|^\gamma\\
&\leq \begin{cases}
C_6 |\zeta|^\gamma, & \ |\zeta|\leq 1,\\
C_7 |\zeta|^{1+\alpha+\delta}, & \ |\zeta|> 1.\\
\end{cases}
\end{aligned}
\end{equation}
From \eqref{momtocumfun-inequality} we now have
\begin{align*}
\E \left| A_T^{-1} X^*(Tt)\right|^q &\leq k_q \int_{-\infty}^{\infty} \left( 1- \exp  \{ - 2 |\kappa_{X^*} (A_T^{-1} \zeta, Tt) | \} \right) |\zeta|^{-q-1} d \zeta\\
&\leq k_q \int_{|\zeta|\leq 1} \left( 1- \exp  \{ - 2 C_6 \left|\zeta \right|^{1+\alpha - \varepsilon} \} \right) |\zeta|^{-q-1} d \zeta\\
&\qquad \qquad + k_q \int_{|\zeta|>1} \left( 1- \exp  \{ - 2 C_7 \left|\zeta \right|^{1+\alpha + \varepsilon} \} \right) |\zeta|^{-q-1} d \zeta\\
&\leq k_q \int_{-\infty}^{\infty} \left( 1- \exp  \{ - 2 C_6 \left|\zeta \right|^{1+\alpha - \varepsilon} \} \right) |\zeta|^{-q-1} d \zeta\\
&\qquad \qquad + k_q \int_{-\infty}^{\infty} \left( 1- \exp  \{ - 2 C_7 \left|\zeta \right|^{1+\alpha + \varepsilon} \} \right) |\zeta|^{-q-1} d \zeta.
\end{align*}
By \eqref{e:b2}, the terms on the right-hand side are $q$-th absolute moments of $(1+\alpha-\varepsilon)$-stable and $(1+\alpha+\varepsilon)$-stable random variables  with characteristic functions $\exp  \{ - 2 C_6 \left|\zeta \right|^{1+\alpha - \varepsilon} \}$ and $\exp  \{ - 2 C_7 \left|\zeta \right|^{1+\alpha + \varepsilon} \}$, respectively. Since $q<1+\alpha-\varepsilon$, both integrals are finite. This proves uniform integrability, hence the convergence of moments.

We now want to prove \eqref{e:SLPtau} holds. Since the limit process $L_{1+\alpha}(t)$ is self-similar with $H=1/(1+\alpha)$, from \cite[Theorem 1]{GLST2017Arxiv} we conclude that $\tau_{X^*}(q)=q/(1+\alpha)$ for $q<1+\alpha$. By \cite[Proposition 2.1]{GLST2016JSP}, the scaling function is always convex, hence continuous, so that $\tau_{X^*}(1+\alpha)=1$. On the other hand, from \cite[Theorem 7]{GLST2017Arxiv} we have that $\tau_{X^*}(q)=q-\alpha$ for $q\geq 2$. By taking $1+\alpha$ and $q_1, q_2 \geq 2$ with $q_1<q_2$, we find that $\tau_{X^*}(q)=q-\alpha$ for $q\in \{1+\alpha, q_1, q_2\}$. Hence, these three points lie on a straight line and $\tau_{X^*}$ must be linear on $[1+\alpha,q_2]$ by \cite[Lemma 3]{GLST2017Arxiv}. On the other hand, $\tau_{X^*}(q)=q-\alpha$ for any $q\geq1+\alpha$, which completes the proof of \eqref{e:SLPtau}. 
\end{proof}

\bigskip

\begin{proof}[Proof of Theorem \ref{thm:momZcase}]
The proof is completely analogous to the proof of Theorem \ref{thm:momSLPcase}. In \eqref{e:Zcasemomentbound} and \eqref{e:6.8b} we have already derived the following bound for the $q$-th absolute moment 
\begin{equation*}
\E \left| A_T^{-1} X^*(Tt)\right|^q \leq k_q \int_{-\infty}^{\infty} \left( 1- \exp  \{ - 2 C_2 t^{\beta-\alpha-\delta} \left|\zeta \right|^{\beta} \} \right) |\zeta|^{-q-1} d \zeta,
\end{equation*}
with the integral on the right finite. By \eqref{e:b2}, it is $q$-th absolute moment of a symmetric $\beta$-stable random variable. To prove that \eqref{e:Ztau} holds, one proceeds as in the end of the proof of Theorem \ref{thm:momSLPcase}.
\end{proof}

\bigskip

\begin{proof}[Proof of Theorem \ref{thm:momBMcase}]
If $\mu_L \not\equiv 0$, then $X$ is non-Gaussian and by \cite[Theorem 7]{GLST2017Arxiv} we have that $\tau_{X^*}(q)=q-\alpha$ for $q\geq q^*$, where $q^*$ is the smallest even integer greater than $2\alpha$. 

We next establish the asymptotic behavior of even moments of order less than $2\alpha$. Note that $2 \alpha$ is an even integer and let $\kappa_Y^{(m)}$ denote the $m$-th order cumulant of random variable $Y$:
\begin{equation*}
\kappa_Y^{(m)} = (-i)^m \frac{d^m}{d\zeta^m} \kappa_Y(\zeta) \big|_{\zeta=0}.
\end{equation*}
For a stochastic process $Y=\{Y(t)\}$ we write $\kappa_Y^{(m)}(t)=\kappa_{Y(t)}^{(m)}$, and by suppressing $t$ we mean $\kappa_Y^{(m)}=\kappa_Y^{(m)}(1)$. From the assumption of analyticity of $\kappa_{X}$ around origin, we have by \cite[Theorem 4.2]{bn2001} for $m\in \N$
\begin{equation*}
\kappa_{X^*}^{(m)} (Tt) = m \kappa_{X}^{(m)} I_{m-1}(Tt),
\end{equation*}
where 
\begin{equation}\label{e:Im-1}
I_{m-1}(T) = \int_0^\infty \left( T t \xi + \sum_{k=1}^{m-1} (-1)^{k-1} {m-1 \choose k} \frac{1}{k} \left( e^{-ktT \xi} - 1 \right) \right) \xi^{-m} \pi(d\xi).
\end{equation}
Suppose that $m<\alpha+1$. The function under the integral in \eqref{e:Im-1} is bounded by $C \xi^{-m+1}$ and $\int_0^\infty \xi^{-m+1} \pi(d\xi)<\infty$, hence we can apply the dominated convergence theorem to conclude that 
\begin{equation}\label{e:proofofthmBMmomJi-1}
\frac{I_{m-1}(T)}{tT} \to J_{m-1}:= \int_0^\infty \xi^{-m+1} \pi(d\xi)<\infty,
\end{equation}
and so 
\begin{equation}\label{e:kmT}
\kappa_{X^*}^{(m)}(Tt) \sim m \kappa_{X}^{(m)} J_{m-1} Tt.
\end{equation}
On the other hand, for $m>\alpha+1$ such that $\kappa_{X}^{(m)} \neq 0$ we have by \cite[Lemma 2]{GLST2017Arxiv} that 

\begin{equation}\label{e:m-a}
\kappa_{X^*}^{(m)} (Tt) \sim \ell_m(Tt) (Tt)^{m-\alpha}
\end{equation}
for some slowly varying function at infinity $\ell_m$. For $m=\alpha+1$, if $\int_0^\infty \xi^{-m+1} \pi(d\xi)<\infty$, then \eqref{e:proofofthmBMmomJi-1} still holds. If on the other hand $\int_0^\infty \xi^{-m+1} \pi(d\xi)=\infty$, we can, as in the proof of \cite[Lemma 2]{GLST2017Arxiv}, show that for any $\varepsilon>0$, $T$ can be taken large enough so that $|\kappa_{X^*}^{(\alpha+1)}(T)| \leq C T^{1+\varepsilon}$.

We now have to go from cumulants to moments. Let $m$ be an even integer $m \in \{2,\dots, 2\alpha-2\}$. Since $\mu_L \not\equiv 0$, by \cite[Remark 3.4.]{gupta2009cumulants} we have that $\kappa_X^{(m)} \neq 0$ for every even $m$. Using the expression for moment in terms of cumulants (see e.g.~\cite[Proposition 3.3.1]{peccati2011wiener}), for an even integer $m$ we have
\begin{equation}\label{proof:cor:intermittency-integrated}
\E|X^*(Tt)|^m = \E(X^*(Tt))^m = \sum_{k=1}^m B_{m,k} \left( \kappa_{X^*}^{(1)}(Tt), \dots, \kappa_{X^*}^{(m-k+1)}(Tt) \right),
\end{equation}
where $B_{m,k}$ is the partial Bell polynomial given by (see \cite[Definition 2.4.1]{peccati2011wiener})
\begin{equation*}
B_{m,k}(x_1,\dots,x_{m-k+1}) = \sum_{r_1,\dots,r_{m-k+1}} \frac{m!}{r_1! \cdots r_{m-k+1}!} \left(\frac{x_1}{1!}\right)^{r_1} \cdots \left(\frac{x_{m-k+1}}{(m-k+1)!}\right)^{r_{m-k+1}}
\end{equation*}
and the sum is over all nonnegative integers $r_1,\dots,r_{m-k+1}$ satisfying $r_1+\cdots +r_{m-k+1}=k$ and
\begin{equation}\label{proof:cor:intermittency-integrated3}
1 r_1 + 2 r_2 + \cdots + (m-k+1) r_{m-k+1}=m.
\end{equation}
Since $\kappa_{X^*}^{(1)}(Tt)=0$, the nonzero terms of the sum in the expression for $B_{m,k} \big(  \kappa_{X^*}^{(1)}(Tt), \allowbreak \dots,\allowbreak \kappa_{X^*}^{(m-k+1)}(Tt)\allowbreak \big)$ are obtained when $r_1=0$. 

Suppose first that $m<1+\alpha$ so that $\kappa_{X^*}^{(l)}(Tt) \sim \kappa_{X}^{(l)} l J_{l-1} Tt$ for every $l\in \{2,\dots,m\}$ by \eqref{e:kmT}. It is easy to see that the highest power of $T$ in \eqref{proof:cor:intermittency-integrated} will then be obtained by taking one of the $r_i$'s as large as possible. This is obviously achieved by taking $r_2=m/2$ which implies $r_i=0$ for $i\neq2$ by \eqref{proof:cor:intermittency-integrated3}. We conclude that 
\begin{equation}\label{e:thmBMmomasymptot}
\E|X^*(T)|^m \sim \frac{m!}{(m/2)!} \left( \frac{2 \kappa_{X}^{(2)}  J_{1}}{2} \right)^{m/2}  (Tt)^{m/2} = \frac{m!}{2^{m/2} (m/2)!} \widetilde{\sigma}^m  (Tt)^{m/2},
\end{equation}
with $\widetilde{\sigma}$ defined in \eqref{sigmatilBM}. 

Now if $m>\alpha+1$, we may have additional terms of the form $\ell_l(Tt) (Tt)^{l-\alpha}$, $l\in \{ \lceil \alpha+1 \rceil,\dots, m\}$ or the one coming from $\kappa_{X^*}^{(\alpha+1)}(Tt)$. Since $|\kappa_{X^*}^{(\alpha+1)}(Tt)| \leq C T^{1+\varepsilon}$ for $\varepsilon$ arbitrary small, the highest power of $T$ coming from these terms would correspond to the term $\ell_m(Tt) (Tt)^{m-\alpha}$ by \eqref{e:m-a}. However, since $m<2\alpha \Leftrightarrow m-\alpha<m/2$, this would not dominate the term with $T^{m/2}$ that can be obtained as in the previous case and hence \eqref{e:thmBMmomasymptot} still holds. To summarize, we have proved that \eqref{e:thmBMmomasymptot} holds for every even integer $m \in \{2,\dots, 2\alpha-2\}$, hence the convergence of moments $\E|T^{-1/2} X^*(Tt)|^q \to \E|B(t)|^q$ holds for all moments of order $q\leq 2\alpha-2$ and every $t>0$. By \cite[Theorem 1]{GLST2017Arxiv}, we have then $\tau_{X^*}(q)=q/2$ for $q\leq 2\alpha-2$. 

It remains to extend the argument to $q\leq 2\alpha$, that is to show that $\tau_{X^*}(q)=q/2$ for $q\leq 2\alpha$. For $m=2\alpha$ we would have in \eqref{proof:cor:intermittency-integrated} the term $\ell_m(Tt) (Tt)^{m-\alpha}=\ell_m(Tt) (Tt)^{\alpha}$ coming from $\kappa_{X^*}^{(m)} (Tt)$ as in \eqref{e:m-a}, and the term of the order $T^{m/2}=T^{\alpha}$ coming from $(\kappa_{X^*}^{(2)} (Tt))^{m/2} = (\kappa_{X^*}^{(2)} (Tt))^{\alpha}$ as in \eqref{e:kmT}. The exact asymptotics as in \eqref{e:thmBMmomasymptot} would depend on the form of the slowly varying function $\ell_m$, but nevertheless it can be represented as $\E|X^*(T)|^m \sim \widetilde{\ell}_m(T) T^\alpha$ for some slowly varying function $\widetilde{\ell}_m$. From \eqref{deftau} we conclude that $\tau_{X^*}(2\alpha)=\alpha$.

Consider now three points $q_1=0$, $0<q_2\leq 2\alpha-2$ and $q_3=2\alpha$. We have proved that $\tau_{X^*}(q)=q/2$ for $q \in \{q_1, q_2, q_3\}$. Since the scaling function is always convex (\cite[Proposition 2.1]{GLST2016JSP}) and the convex function passing through three collinear points is linear (\cite[Lemma 3]{GLST2017Arxiv}), we conclude that $\tau_{X^*}(q)=q/2$ for $q\leq 2\alpha$. On the other hand, if we take $q_1=2\alpha$ and $q^*\leq q_2<q_3$, then $\tau_{X^*}(q)=q-\alpha$ for $q \in \{q_1, q_2, q_3\}$. Since $q_3$ can be taken arbitrarily large, by using convexity again we conclude $\tau_{X^*}(q)=q-\alpha$ for $q\geq 2\alpha$.

If $X^*$ is Gaussian, then all the cumulants of order greater than $2$ are zero and hence \eqref{e:thmBMmomasymptot} would hold for any even integer $m$. This implies then that $\tau_{X^*}(q)=q/2$ for every $q>0$.
\end{proof}

\bigskip


\section*{Acknowledgements}
Nikolai N.~Leonenko was supported in particular by Cardiff Incoming Visiting Fellowship Scheme, International Collaboration Seedcorn Fund, Australian Research Council's Discovery Projects funding scheme (project DP160101366)and the project MTM2015-71839-P of MINECO, Spain (co-funded with FEDER funds). Murad S.~Taqqu was supported in part by the Simons foundation grant 569118 at Boston University.

\bigskip
\bigskip

\bibliographystyle{agsm}
\bibliography{References}

\end{document}